\def\R{\mathbb{R}}
\def\rP{\mathbb{P}}
\def\intr{\mathop{\rm int}}
\def\argmin{\mathop{\rm arg\, min}}
\def\S{{\mathcal S}}
\def\by{{\bf y}}
\def\bu{{\bf u}}
\def\sq{{\mathsf q}}
\def\sX{{\mathsf X}}
\def\sY{{\mathsf Y}}
\def\sZ{{\mathsf Z}}
\def\sE{{\mathsf E}}
\def\sU{{\mathsf U}}
\def\sK{{\mathsf K}}
\def\sg{{\mathsf g}}
\def\tgamma{\tilde{\gamma}}
\theoremstyle{remark}
\newtheorem{definition}{Definition}
\newtheorem{theorem}{Theorem}
\newtheorem{proposition}{Proposition}
\newtheorem{lemma}{Lemma}
\theoremstyle{remark}
\newtheorem{remark}{Remark}
\newtheorem{assumption}{Assumption}
\begin{document}
\sloppy
\title{Finite Model Approximations and Asymptotic Optimality of Quantized Policies in Decentralized Stochastic Control
\thanks{This research was supported in part by
the Natural Sciences and Engineering Research Council (NSERC) of Canada.}
}
\author{Naci Saldi, Serdar Y\"uksel, Tam\'{a}s Linder
\thanks{The authors are with the Department of Mathematics and Statistics,
     Queen's University, Kingston, ON, Canada,
     Email: \{nsaldi,yuksel,linder\}@mast.queensu.ca}
     }
\maketitle

\begin{abstract}
In this paper, we consider finite model approximations of a large class of static and dynamic team problems where these models are constructed through uniform quantization of the observation and action spaces of the agents. The strategies obtained from these finite models are shown to approximate the optimal cost with arbitrary precision under mild technical assumptions. In particular, quantized team policies are asymptotically optimal. This result is then applied to Witsenhausen's celebrated counterexample and the Gaussian relay channel problem. For the Witsenhausen's counterexample, our approximation approach provides, to our knowledge, the first rigorously established result that one can construct an $\varepsilon$-optimal strategy for any $\varepsilon > 0$ through a solution of a simpler problem.
\end{abstract}

\section{Introduction}\label{sec1}

Team decision theory has its roots in control theory and economics. Marschak \cite{mar55} was perhaps the first to introduce the basic elements of teams and to  provide the first steps toward the development of a {\it team theory}. Radner \cite{rad62} provided foundational results for static teams, establishing connections between person-by-person optimality, stationarity, and team-optimality. The work of Witsenhausen \cite{wit71}, \cite{wit75}, \cite{wit88}, \cite{WitsenStandard}, \cite{WitsenhausenSIAM71} on dynamic teams and characterization of information structures has been crucial in the progress of our understanding of dynamic teams. Further discussion on the design of information structures in the context of team theory and economics applications are given in \cite{Arrow1985} and \cite{vanZandt}, among a rich collection of other contributions not listed here.

Establishing the existence and structure of optimal policies is a challenging problem. Existence of optimal policies for static and a class of sequential dynamic teams has been studied recently in \cite{GuYuBaLa15}. More specific setups and non-existence results have been studied in \cite{WuVer11}, \cite{wit68}. For a class of teams which are {\it convex}, one can reduce the search space to a smaller parametric class of policies (see \cite{rad62,KraMar82,WaSc00}, and for a comprehensive review, see \cite{YukselBasarBook}).

In this paper, our aim is to study the approximation of static and dynamic team problems using finite models which are obtained through the uniform discretization, on a finite grid, of the observation and action spaces of agents. In particular, we are interested in the asymptotic optimality of quantized policies.

In the literature relatively few results are available on approximating static or dynamic team problems. We can only refer the reader to \cite{LiMaSh09},\cite{LeLaHo01,BaPaZo01,MeAkRo14,GnSa09,GnSa12,GnSaGa12} and a few references therein. With the exception of \cite{GnSa09,GnSa12,GnSaGa12}, these works in general study a specific setup (the Witsenhausen counterexample) and are mostly experimental; that is, they do not rigorously prove the convergence of approximate solutions.

In \cite{GnSa09,GnSaGa12}, a class of static team problems are considered and the existence of smooth optimal strategies is studied. Under fairly strong assumptions, the existence of an optimal strategy with Lipschitz continuous partial derivatives up to some order is proved. By using this result, an error bound on the accuracy of near optimal solutions is established, where near optimal strategies are expressed as linear combinations of basis functions with adjustable parameters. In \cite{GnSa12}, the same authors investigated the approximation problem for Witsenhausen's counterexample, which does not satisfy the conditions in \cite{GnSa09,GnSaGa12}; the authors derived an analogous error bound on the accuracy of the near optimal solutions. For the result in \cite{GnSa12} both the error bound and the near optimal solutions depend on the knowledge of the optimal strategy for Witsenhausen's counterexample. Moreover, the method devised in \cite{GnSa12} implicitly corresponds to the discretization of only the action spaces of the agents. Therefore, it involves only the approximation with regard to the action space, and does not correspond to a tractable approximation for the set of policies/strategies.

Particular attention has been paid in the literature to Witsenhausen's counterexample. This problem has puzzled the control community for more than 40 years with its philosophical impact demonstrating the challenges that arise due to a non-classical information structure, and its formidable difficulty in obtaining an optimal or suboptimal solution. In fact, optimal policies and their value are still unknown, even though the existence of an optimal policy has been established using various methods \cite{GuYuBaLa15,wit68,WuVer11}. Some relevant results on obtaining approximate solutions can be found in \cite{McHa15,LiMaSh09,LeLaHo01,BaPaZo01,MeAkRo14,GnSa12,banbas87a}. Certain lower bounds, that are not tight, building on information theoretic approaches are available in \cite{GroverParkSahai}, see also \cite{MitSah99}. In this paper, we show that finite models obtained through the uniform quantization of the observation and action spaces lead to a sequence of policies whose cost values converge to the optimal cost. Thus, with high enough computation power, one could guarantee that for any $\varepsilon > 0$, an $\varepsilon$-optimal policy can be constructed.

We note that the operation of quantization has typically been the method to show that a non-linear policy can perform better than an optimal linear policy, both for Witsenhausen's counterexample \cite{banbas87a,LeLaHo01} and another interesting model known as the Gaussian relay channel problem \cite{LipsaMartins,zaidi2013optimal}. Our findings show that for a large class of problems, quantized policies not only may perform better than linear policies, but that they are actually almost optimal.

We finally note that although finding optimal solutions for finite models for Witsenhausen's counterexample as the ones constructed in this paper was shown to be NP-complete in \cite{PaTs86}, the task is still computationally less demanding than the method used in \cite{GnSa12}. Loosely speaking, to obtain a near optimal solution using the method in \cite{GnSa12}, one has to compute the optimal partitions of the observation spaces and the optimal representation points in the action spaces. In contrast, the partitions of the observation spaces and the available representation points in the action spaces used by our method are known \emph{a priori}. We also note that if one can establish smoothness properties of optimal policies such as differentiability or Lipschitz continuity (e.g., as in \cite{GnSaGa12}), the methods developed in our paper can be used to provide rates of convergence for the sequence of finite solutions as the finite models are successively refined.


{\bf Contributions of the paper.} (i) We establish that finite models asymptotically represent the true models in the sense that the solutions obtained by solving such finite models lead to cost values that converge to the optimal cost of the original model. Thus, our approach can be viewed to be constructive; even though the computational complexity is typically at least exponential in the cardinality of the finite model. (ii) The approximation approach here provides, to our knowledge, the first rigorously established result showing that one can construct an $\varepsilon$-optimal strategy for any $\varepsilon > 0$ through an explicit solution of a simpler problem for a large class of static and dynamic team problems, in particular for the Witsenhausen's celebrated counterexample.

The rest of the paper is organized as follows. In Section~\ref{sub1sec1} we review the definition of Witsenhausen's \emph{intrinsic model} for sequential team problems. In Section~\ref{boundedcase} we consider finite \emph{observation} approximations of static team problems with compact observation spaces and bounded cost, and prove the asymptotic optimality of strategies obtained from finite models. In Section~\ref{unboundedcase} an analogous approximation result is obtained for static team problems with non-compact observation spaces and unbounded cost functions. In Section~\ref{sec3} we consider finite observation approximations of dynamic team problems via the static reduction method. In Sections~\ref{sec4} and \ref{gaussrelay} we apply the results derived in Section~\ref{sec3} to study finite observation space approximations of Witsenhausen's celebrated counterexample and the Gaussian relay channel. Discretization of the action spaces is considered in Section~\ref{discact}. Section~\ref{sec5} concludes the paper.

\section{Sequential teams and characterization of information structures}\label{sub1sec1}

In this section, we introduce the model as laid out by Witsenhausen, called {\it the Intrinsic Model} \cite{wit75}; see \cite{YukselBasarBook} for a more comprehensive overview and further characterizations and classifications of information structures. In this model, any action applied at any given time is regarded as applied by an individual agent, who acts only once. One advantage of this model, in addition to its generality, is that the definitions regarding information structures can be compactly described.

Suppose that in the decentralized system, there is a pre-defined order in which the agents act. Such systems are called {\it sequential systems} (for non-sequential teams, we refer the reader to \cite{AnderslandTeneketzisI}, \cite{AnderslandTeneketzisII} and \cite{Teneketzis2}, in addition to \cite{WitsenhausenSIAM71}). In the following, all spaces are assumed to be Borel spaces (i.e., Borel subsets of complete and separable metric spaces) endowed with Borel $\sigma$-algebras. In the context of a sequential system, the {\it Intrinsic Model} has the following components:

\begin{itemize}
\item A collection of {\it measurable spaces} ${\cal I}:= \bigl\{  (\Omega, {\cal F}),
(\sU^i,{\cal U}^i), (\sY^i,{\cal Y}^i), i \in {\cal N}\bigr\}$, specifying the system's distinguishable events, and the action and measurement spaces. Here $|{\cal N}| \coloneqq N$ is the number of actions taken, and each of these actions is supposed to be taken by an individual agent (hence, an agent with perfect recall can also be regarded as a separate decision maker every time it acts). The pair $(\Omega, {\cal F})$ is a measurable space on which an underlying probability can be defined. The pair $(\sU^i, {\cal U}^i)$ denotes the measurable space from which the action $u^i$ of Agent~$i$ is selected. The pair $(\sY^i,{\cal Y}^i)$ denotes the measurement (or observation) space of Agent~$i$.

\item A {\it measurement constraint} which establishes the connection between the observation variables and the system's distinguishable events. The $\sY^i$-valued observation variables are given by $y^i \sim \eta^i(\,\cdot\,|\omega,{\bf u}^{i-1})$, where ${\bf u}^{i-1}=(u^1,\ldots,u^{i-1})$, $\eta^i$ is a stochastic kernel on $\sY^i$ given $\Omega\times \prod_{j=1}^{i-1}\sU^j$, and $u^k$ denotes the action of Agent~$k$.

\item A {\it design constraint}, which restricts the set of admissible $N$-tuple control strategies $\underline{\gamma}= \{\gamma^1, \gamma^2, \dots, \gamma^N\}$, also called {\it policies}, to the set of all measurable functions, so that $u^i = \gamma^i(y^i)$, where $\gamma^i$ is a measurable function. Let $\Gamma^i$ denote the set of all admissible policies for Agent~$i$ and let ${\bf \Gamma} = \prod_{k} \Gamma^k$.

\item A {\it probability measure} $\rP$ defined on $(\Omega, {\cal F})$ which describes the measures on the random events in the model.
\end{itemize}

We note that the intrinsic model of Witsenhausen gives a set-theoretic characterization of information fields; however, for Borel spaces, the model above is equivalent to the intrinsic model for sequential team problems.

Under this intrinsic model, a sequential team problem is {\it dynamic} if the information available to at least one agent is affected by the action of at least one other agent. A decentralized problem is {\it static}, if the information available at every decision maker is only affected by state of the nature; that is, no other decision maker can affect the information at any given decision maker.

Information structures (ISs) can also be classified as classical, quasi-classical, and nonclassical. An IS is {\it classical} if $y^i$ contains all of the information available to Agent~$k$ for $k < i$. An IS is {\it quasi-classical} or {\it partially nested}, if whenever $u^k$ (for some $k < i$) affects $y^i$, then Agent~$i$ has access to $y^k$. An IS which is not partially nested is {\it nonclassical}.

For any $\underline{\gamma} = (\gamma^1, \cdots, \gamma^N)$, we let the cost of the team problem be defined by
\[J(\underline{\gamma}) \coloneqq E[c(\omega,{\bf y},{\bf u})],\]
for some measurable cost function $c: \Omega \times \prod_i \sY^i \times \prod_i \sU^i \to \mathbb{R}$, where ${\bf u} \coloneqq (u^1,\ldots,u^N) = \underline{\gamma}({\bf y})$ and ${\bf y} \coloneqq (y^1,\ldots,y^N)$.

\begin{definition}\label{Def:TB1}
For a given stochastic team problem, a policy (strategy)
${\underline \gamma}^*:=({\gamma^1}^*,\ldots, {\gamma^N}^*)\in {\bf \Gamma}$ is
an {\it optimal team decision rule} if
\begin{equation}
J({\underline \gamma}^*)=\inf_{{{\underline \gamma}}\in {{\bf \Gamma}}}
J({{\underline \gamma}})=:J^*. \nonumber
\end{equation}
The cost level $J^*$ achieved by this strategy is the {\it optimal team cost}.
\end{definition}

\begin{definition}\label{Def:TB2}
For a given stochastic team problem, a policy ${\underline \gamma}^*:=({\gamma^1}^*,\ldots, {\gamma^N}^*)$ constitutes a {\it Nash
equilibrium} (synonymously,  a {\it person-by-person optimal} solution) if, for all $\beta
\in \Gamma^i$ and all $i\in N$, the following inequalities hold:
\begin{equation}
{J}^*:=J({\underline \gamma}^*) \leq J({\underline \gamma}^{-i*},
\beta), \nonumber
\end{equation}
where we have adopted the notation
\begin{equation}
({\underline \gamma}^{-i*},\beta):= ({\gamma^1}^*,\ldots, {\gamma^{i-1}}^*,
\beta, {\gamma^{i+1}}^*,\ldots, {\gamma^N}^*). \nonumber
\end{equation}
\end{definition}

Unless otherwise specified, the term `measurable' will refer to Borel measurability in the rest of the paper. In what follows, the terms \emph{policy}, \emph{measurement}, and \emph{agent} are used synonymously with \emph{strategy}, \emph{observation}, and \emph{decision maker}, respectively.

\subsection{Auxiliary Results}

In this section, for the ease of reference we state some well-known results in measure theory and functional analysis that will be frequently used in the paper.

The first theorem is Lusin's theorem which roughly states that any measurable function is \emph{almost} continuous.

\begin{theorem}{(Lusin's Theorem \cite[Theorem 7.5.2]{Dud02})}\label{lusin}
Let $\sE_1$ and $\sE_2$ be two Borel spaces and let $\mu$ be a probability measure on $\sE_1$. Let $f$ be a measurable function from $\sE_1$ into $\sE_2$. Then, for any $\varepsilon>0$ there is a closed set $F \subset \sE_1$ such that $\mu(\sE_1 \setminus F) < \varepsilon$ and the restriction of $f$ to $F$ is continuous.
\end{theorem}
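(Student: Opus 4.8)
The plan is to reduce the continuity of the restriction $f|_F$ to checking the behaviour of $f$ on a \emph{countable} base, and then to manufacture $F$ by excising from $\sE_1$ the small ``bad'' sets supplied by the regularity of $\mu$. Since $\sE_2$ is a Borel space, it is separable metric and hence second countable; I would fix a countable base $\{V_n\}_{n\geq 1}$ for its topology. Because a map into $\sE_2$ is continuous as soon as the preimage of each basic open set $V_n$ is relatively open, it suffices to build a single closed set $F$ of measure close to $1$ on which $F\cap f^{-1}(V_n)$ is relatively open for \emph{every} $n$ simultaneously.

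First I would invoke the regularity of finite Borel measures on metric spaces: since $\sE_1$ is a Borel subset of a complete separable metric space, for every Borel set $A\subseteq\sE_1$ and every $\delta>0$ there exist a closed set $C$ and an open set $U$ with $C\subseteq A\subseteq U$ and $\mu(U\setminus C)<\delta$. Applying this to each $A_n:=f^{-1}(V_n)$ with $\delta=\varepsilon\,2^{-n-1}$ yields closed sets $C_n$ and open sets $U_n$ with $C_n\subseteq A_n\subseteq U_n$ and $\mu(U_n\setminus C_n)<\varepsilon\,2^{-n-1}$.

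Next I would set
\[
F:=\sE_1\setminus\bigcup_{n\geq 1}\bigl(U_n\setminus C_n\bigr).
\]
Each difference $U_n\setminus C_n=U_n\cap C_n^c$ is open (as $U_n$ is open and $C_n$ is closed), so the union is open and $F$ is closed; moreover $\mu(\sE_1\setminus F)\leq\sum_{n\geq 1}\mu(U_n\setminus C_n)<\varepsilon$. The decisive verification is continuity of $f|_F$: since $F$ is disjoint from $U_n\setminus C_n$, we get $F\cap U_n\subseteq C_n\subseteq A_n$, while $A_n\subseteq U_n$ gives $F\cap A_n\subseteq F\cap U_n$; hence $F\cap f^{-1}(V_n)=F\cap A_n=F\cap U_n$, which is relatively open in $F$. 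As every open subset of $\sE_2$ is a union of the $V_n$ and preimages commute with unions, $(f|_F)^{-1}(V)$ is relatively open in $F$ for every open $V\subseteq\sE_2$, i.e.\ $f|_F$ is continuous.

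The main obstacle is the regularity step, namely the fact that finite Borel measures on separable metric spaces admit closed inner and open outer approximations of arbitrary accuracy. I would establish it by showing that the collection of Borel sets enjoying this sandwiching property is a $\sigma$-algebra containing every closed set: a closed set $A$ is handled by taking $C=A$ and the shrinking open neighbourhoods $U_k=\{x:d(x,A)<1/k\}\downarrow A$, so that $\mu(U_k\setminus A)\to 0$ by continuity from above (here finiteness of $\mu$ is used). Once this is in place, everything else is bookkeeping with the union bound and the base $\{V_n\}$.
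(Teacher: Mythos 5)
The paper does not prove this statement at all: Lusin's theorem is quoted as an auxiliary result with a citation to Dudley \cite[Theorem 7.5.2]{Dud02}, so there is no in-paper argument to compare against. Your proof is correct and self-contained, and it is essentially the standard textbook argument behind the cited theorem. The three pillars all hold in the stated generality: (i) $\sE_2$, being a Borel subset of a complete separable metric space, is separable metric and hence second countable, so the countable base $\{V_n\}$ exists; (ii) every finite Borel measure on a metric space is closed regular (inner closed / outer open approximation), and your sketch of this---the family of sandwichable sets is a $\sigma$-algebra containing the closed sets, with $U_k=\{x: d(x,A)<1/k\}\downarrow A$ and continuity from above for finite measures---is the standard proof and needs no completeness or separability of $\sE_1$; (iii) the excision $F=\sE_1\setminus\bigcup_n(U_n\setminus C_n)$ is closed in $\sE_1$ (which is the correct reading of ``closed'' in the statement), has $\mu(\sE_1\setminus F)<\varepsilon$ by the union bound, and the identity $F\cap f^{-1}(V_n)=F\cap U_n$ makes every basic preimage relatively open, which suffices for continuity of $f|_F$ since preimages commute with unions. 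One small remark: your construction actually yields $\mu(\sE_1\setminus F)<\varepsilon/2$, and it establishes the theorem under weaker hypotheses than stated ($\sE_1$ any metric space with a finite Borel measure, $\sE_2$ any separable metric space), which matches the generality in which Dudley proves it.
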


The second theorem is the Dugundji extension theorem which is a generalization of the Tietze extension theorem \cite{Dud02}.

\begin{theorem}{(Dugundji Extension Theorem \cite[Theorem 7.4]{GrDu03})}\label{dugundji}
Let $\sE_1$ be a Borel space and let $F$ be a closed subset of $\sE_1$. Let $\sE_2$ be a convex subset of some locally convex vector space.
Then any continuous $f:F \rightarrow \sE_2$ has a continuous extension on $\sE_1$.
\end{theorem}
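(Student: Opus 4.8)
The plan is to prove this by the classical partition-of-unity construction, exploiting that a Borel space $\sE_1$ is in particular a separable metric space and hence paracompact. Write $d$ for a compatible metric, set $U := \sE_1 \setminus F$ (which is open), and note that $\rho(x) := d(x,F) > 0$ for every $x \in U$ since $F$ is closed. First I would cover $U$ by the shrinking balls $B\bigl(x,\tfrac12\rho(x)\bigr)$, $x \in U$, whose radii tend to $0$ as one approaches $F$; by paracompactness of metric spaces this open cover admits a locally finite refinement $\{V_j\}_j$ together with a subordinate continuous partition of unity $\{\phi_j\}_j$, so that $\supp \phi_j \subseteq B\bigl(x_j,\tfrac12\rho(x_j)\bigr)$ for some $x_j \in U$, each $\phi_j \geq 0$, and $\sum_j \phi_j \equiv 1$ on $U$. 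For each $j$ I would then select a point $a_j \in F$ that is nearly closest to $x_j$, say $d(x_j, a_j) \leq 2\rho(x_j)$, which is possible by definition of the infimum.

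With these data I would define $\tilde f : \sE_1 \to \sE_2$ by $\tilde f|_F := f$ and $\tilde f(x) := \sum_j \phi_j(x)\, f(a_j)$ for $x \in U$. Because the partition of unity is locally finite, this sum is locally finite, and since each $f(a_j) \in \sE_2$, the coefficients are nonnegative, and $\sum_j \phi_j(x) = 1$, the value $\tilde f(x)$ is a finite convex combination of points of $\sE_2$; convexity of $\sE_2$ is exactly what keeps $\tilde f(x)$ inside the target set. Continuity of $\tilde f$ on the open set $U$ is immediate, since near any interior point only finitely many $\phi_j$ are nonzero, each is continuous, and the $f(a_j)$ are fixed vectors.

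The hard part will be continuity at the boundary, that is, at points $a \in F$ approached from within $U$, and this is where the geometry of the cover and the locally convex structure of $\sE_2$ must be used together. Recall that the topology of the ambient space is generated by a family of seminorms $p$. Fix $a \in F$ and $x \in U$, and put $\delta := d(x,a)$. If $\phi_j(x) > 0$ then $x \in B\bigl(x_j,\tfrac12\rho(x_j)\bigr)$, so $\rho(x_j) = d(x_j,F) \leq d(x_j,x) + d(x,a) < \tfrac12\rho(x_j) + \delta$, whence $\rho(x_j) < 2\delta$ and $d(x_j,x) < \tfrac12\rho(x_j) < \delta$; combined with $d(x_j,a_j) \leq 2\rho(x_j) < 4\delta$ this yields
\[
d(a_j, a) \leq d(a_j,x_j) + d(x_j,x) + d(x,a) < 4\delta + \delta + \delta = 6\delta .
\]
Hence every $a_j$ that contributes to $\tilde f(x)$ lies within $6\delta$ of $a$.

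By continuity of $f$ on $F$ at $a$, given a seminorm $p$ and $\eps > 0$ there is $\delta$ so small that $p\bigl(f(a') - f(a)\bigr) < \eps$ for every $a' \in F$ with $d(a',a) < 6\delta$; then
\[
p\bigl(\tilde f(x) - f(a)\bigr) = p\Bigl(\textstyle\sum_j \phi_j(x)\bigl(f(a_j) - f(a)\bigr)\Bigr) \leq \sum_j \phi_j(x)\, p\bigl(f(a_j) - f(a)\bigr) < \eps ,
\]
so $\tilde f(x) \to f(a) = \tilde f(a)$ as $x \to a$. This establishes continuity of $\tilde f$ everywhere and completes the proof. The only genuinely delicate ingredients are the existence of the locally finite refinement and subordinate partition of unity (Stone's theorem on paracompactness of metric spaces) and the bookkeeping in the boundary estimate above, which forces the contributing $a_j$ to cluster around the limit point $a$.
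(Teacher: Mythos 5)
The paper does not prove this statement at all: it is quoted as a classical auxiliary result, cited directly from \cite[Theorem 7.4]{GrDu03}, so there is no internal proof to compare against. Your argument is correct and is essentially the canonical proof found in that reference --- Dugundji's original construction via a cover of $\sE_1 \setminus F$ by balls of radius $\tfrac12 d(\cdot,F)$, a subordinate locally finite partition of unity (Stone's paracompactness theorem), nearly-closest points $a_j \in F$, and the clustering estimate $d(a_j,a) < 6\,d(x,a)$ that forces the convex combination $\sum_j \phi_j(x) f(a_j)$ to converge to $f(a)$ seminorm-by-seminorm at boundary points --- so nothing needs to be added.
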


The next theorem originally states that the closed convex hull of a compact subset in a locally convex vector space is compact if the vector space is completely metrizable. Since the closure of a convex set is convex and a closed subset of a compact set is compact, we can state the theorem in the following form.

\begin{theorem}{\cite[Theorem 5.35]{AlBo06}}\label{clconv}
In a completely metrizable locally convex vector space $\sE$, the closed convex hull of a compact set is convex and compact. The same statement also holds when $\sE$ is replaced with any of its closed and convex subsets.
\end{theorem}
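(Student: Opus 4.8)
The plan is to exploit the metric structure of $\sE$: in a complete metric space a set is relatively compact if and only if it is totally bounded, so I would reduce everything to showing that the (ordinary) convex hull $\mathrm{conv}(K)$ is totally bounded. Convexity of the closed convex hull $\overline{\mathrm{conv}}(K)$ is immediate, since $\mathrm{conv}(K)$ is convex by construction and the closure of a convex set is convex in any topological vector space. For compactness, note that a completely metrizable space is complete, so the closed set $\overline{\mathrm{conv}}(K)$ is complete; as total boundedness passes to the closure, it suffices to prove that $\mathrm{conv}(K)$ is totally bounded.

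This total-boundedness step is the crux, and it is where local convexity is essential. Given any neighborhood $V$ of the origin, I would use continuity of addition to choose a neighborhood $U$ of $0$ with $U+U\subset V$, and by local convexity I may take $U$ to be convex. Since $K$ is compact it is totally bounded, so $K\subset\bigcup_{i=1}^n(x_i+U)$ for finitely many $x_i\in K$; put $F=\{x_1,\ldots,x_n\}$. The key inclusion is $\mathrm{conv}(K)\subset\mathrm{conv}(F)+U$: writing any $y=\sum_j\lambda_j k_j$ as a finite convex combination with $k_j\in K$, each $k_j=x_{i(j)}+u_j$ with $u_j\in U$, so $y=\sum_j\lambda_j x_{i(j)}+\sum_j\lambda_j u_j$, where the first term lies in $\mathrm{conv}(F)$ and the second, being a convex combination of elements of the \emph{convex} set $U$, again lies in $U$. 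Now $\mathrm{conv}(F)$ is the continuous image of the standard simplex under the affine map $(\lambda_i)\mapsto\sum_i\lambda_i x_i$, hence compact and therefore totally bounded; covering it by finitely many translates $z_m+U$ yields $\mathrm{conv}(K)\subset\bigcup_m(z_m+U+U)\subset\bigcup_m(z_m+V)$, which establishes total boundedness of $\mathrm{conv}(K)$.

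For the variant with a closed convex subset $C\subseteq\sE$, I would argue that since $K\subseteq C$ with $C$ convex, every finite convex combination of points of $K$ stays in $C$, so $\mathrm{conv}(K)\subseteq C$, and because $C$ is closed, $\overline{\mathrm{conv}}(K)\subseteq C$. As a closed subset of the complete space $\sE$, the set $C$ is itself complete, and the total-boundedness argument above is unchanged, so the same conclusion holds relative to $C$.

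I expect the main obstacle to be precisely the total-boundedness step, and specifically isolating why the \emph{convexity} of the covering neighborhood $U$ is exactly what keeps a convex combination of the ``errors'' $u_j$ confined to $U$. Without local convexity the inclusion $\mathrm{conv}(K)\subset\mathrm{conv}(F)+U$ breaks down, which is the structural reason the hypothesis cannot be weakened to an arbitrary completely metrizable topological vector space.
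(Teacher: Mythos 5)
Your proof is correct, but it takes a genuinely different route from the paper, because the paper does not prove this statement at all: it is imported verbatim as \cite[Theorem 5.35]{AlBo06}, and the paper's only argument is the remark preceding the theorem, which upgrades the cited form (``the closed convex hull of a compact set is compact'') to the stated form via two observations --- the closure of a convex set is convex, and, for the subset variant, the closed convex hull of $K\subseteq C$ is contained in the closed convex set $C$ and is a closed subset of a compact set, hence compact. You instead reconstruct the proof of the cited theorem itself, via the classical total-boundedness argument; this is essentially the textbook proof, so it buys self-containedness and makes visible exactly where local convexity enters (the convexity of $U$ in the inclusion $\mathrm{conv}(K)\subset\mathrm{conv}(F)+U$), at the cost of redoing material the paper deliberately outsources to the reference. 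One step you should make explicit: the total boundedness you establish is in the vector-topology sense (finitely many translates of a neighborhood of $0$), while the completeness you invoke comes from an arbitrary compatible complete metric, and ``complete plus totally bounded implies compact'' requires both notions to refer to the same metric. The standard repair is to work with a translation-invariant compatible metric (Birkhoff--Kakutani), for which total boundedness in the metric and vector-topology senses coincide, and to note that a completely metrizable topological group is complete in such an invariant metric (Klee's theorem); with that insertion your argument, including your treatment of the closed convex subset case (which matches the paper's remark), is complete.
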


\section{Approximation of Static Team Problems}\label{sec2}

In this section, we consider the finite observation approximation of static team problems. In what follows, the static team problem is formulated in a state-space form which can be reduced to the intrinsic model introduced in Section~\ref{sub1sec1}.

Let $\bigl(\sX,{\cal X},\rP)$ be a probability space representing the state space, where $\sX$ is a Borel space and ${\cal X}$ is its Borel $\sigma$-algebra. We consider an $N$-agent static team problem in which Agent $i$, $i=1,\ldots,N$, observes a random variable $y^i$ and takes an action $u^i$, where $y^i$ takes values in a Borel space $\sY^i$ and $u^i$ takes values in a Borel space $\sU^i$. Given any state realization $x$, the random variable $y^i$ has a distribution $W^i(\,\cdot\,|x)$; that is, $W^i(\,\cdot\,|x)$ is a stochastic kernel on $\sY^i$ given $\sX$ \cite{HeLa96}.

The team cost function $c$ is a non-negative function of the state, observations, and actions; that is, $c: \sX \times \sY \times \sU \rightarrow [0,\infty)$, where $\sY \coloneqq \prod_{i=1}^N \sY^i$ and $\sU \coloneqq \prod_{i=1}^N \sU^i$. For Agent $i$, the set of strategies $\Gamma^i$ is given by
\begin{align}
\Gamma^i \coloneqq \bigl\{ \gamma^i: \sY^i \rightarrow \sU^i, \text{$\gamma^i$ is measurable} \bigr\}. \nonumber
\end{align}
Recall that ${\bf \Gamma} = \prod_{i=1}^N \Gamma^i$. Then, the cost of the team $J: {\bf \Gamma} \rightarrow [0,\infty)$ is given by
\begin{align}
J(\underline{\gamma}) = \int_{\sX \times \sY} c(x,{\bf y},{\bf u}) \rP(dx,d{\bf y}), \nonumber
\end{align}
where ${\bf u} = \underline{\gamma}({\bf y})$. Here, with an abuse of notation, $\rP(dx,d{\bf y}) \coloneqq ~\rP(dx) \prod_{i=1}^N W^i(dy^i|x)$ denotes the joint distribution of the state and observations. Therefore, we have
\begin{align}
J^* = \inf_{\underline{\gamma} \in {\bf \Gamma} } J(\underline{\gamma}). \nonumber
\end{align}

With these definitions, we first consider the case where the observation spaces are compact and the cost function is bounded. In the second part, teams with non-compact observation spaces and unbounded cost function will be studied.

\subsection{Approximation of Static Teams with Compact Observation Spaces and Bounded Cost}\label{boundedcase}

In this section, we consider the finite observation approximation of static team problems with compact observation spaces and bounded cost function. We impose the following assumptions on the components of the model.

\begin{assumption}\label{as1}
\begin{itemize}
\item [ ]
\item [(a)] The cost function $c$ is bounded. In addition, it is continuous in ${\bf u}$ and ${\bf y}$ for any fixed $x$.
\item [(b)] For each $i$, $\sU^i$ is a convex subset of a locally convex vector space.
\item [(c)] For each $i$, $\sY^i$ is compact.
\end{itemize}
\end{assumption}

We first prove that the minimum cost achievable by continuous strategies is equal to the optimal cost $J^*$. To this end, for each $i$, we define
$\Gamma^i_c \coloneqq \bigl\{ \gamma^i \in \Gamma^i : \text{ $\gamma^i$ is continuous}\bigr\}$ and ${\bf \Gamma}_c \coloneqq \prod_{i=1}^N \Gamma^i_c$.

\begin{proposition}\label{prop1}
We have
\begin{align}
\inf_{\underline{\gamma} \in {\bf \Gamma}_c} J(\underline{\gamma}) = J^*. \nonumber
\end{align}
\end{proposition}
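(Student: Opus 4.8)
The plan is to exploit the two auxiliary results, Lusin's Theorem (Theorem~\ref{lusin}) and the Dugundji Extension Theorem (Theorem~\ref{dugundji}), to approximate an arbitrary measurable policy by a continuous one without changing the cost appreciably. Since ${\bf \Gamma}_c \subset {\bf \Gamma}$, the inequality $\inf_{\underline{\gamma} \in {\bf \Gamma}_c} J(\underline{\gamma}) \geq J^*$ is immediate, so the whole content lies in the reverse inequality. Fix $\varepsilon > 0$ and choose a (not necessarily continuous) policy $\underline{\gamma} = (\gamma^1,\ldots,\gamma^N) \in {\bf \Gamma}$ with $J(\underline{\gamma}) \leq J^* + \varepsilon$. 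I will produce a continuous policy $\hat{\underline{\gamma}} \in {\bf \Gamma}_c$ that agrees with $\underline{\gamma}$ outside a set of arbitrarily small probability; boundedness of $c$ then forces $J(\hat{\underline{\gamma}})$ to be close to $J(\underline{\gamma})$, and letting the exceptional probability shrink yields the claim.

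For the construction, let $\mu^i(\cdot) := \int_{\sX} W^i(\cdot \mid x)\,\rP(dx)$ denote the marginal distribution of $y^i$ on $\sY^i$. Given $\delta > 0$, apply Lusin's Theorem to each $\gamma^i : \sY^i \to \sU^i$ with the measure $\mu^i$: there is a closed set $F^i \subset \sY^i$ with $\mu^i(\sY^i \setminus F^i) < \delta$ such that the restriction of $\gamma^i$ to $F^i$ is continuous. By Assumption~\ref{as1}(b), $\sU^i$ is a convex subset of a locally convex vector space, so the Dugundji Extension Theorem applies with $\sE_1 = \sY^i$, $F = F^i$, and $\sE_2 = \sU^i$: the continuous map $\gamma^i|_{F^i}$ extends to a continuous $\hat{\gamma}^i : \sY^i \to \sU^i$. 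Then $\hat{\underline{\gamma}} := (\hat{\gamma}^1,\ldots,\hat{\gamma}^N) \in {\bf \Gamma}_c$, and crucially $\hat{\gamma}^i = \gamma^i$ on $F^i$ for every $i$.

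It remains to compare costs. On the event $\{{\bf y} : y^i \in F^i \text{ for all } i\}$ the two policies produce identical actions, so the integrands $c(x,{\bf y},\underline{\gamma}({\bf y}))$ and $c(x,{\bf y},\hat{\underline{\gamma}}({\bf y}))$ coincide; the complementary event is contained in $\bigcup_{i=1}^N \{y^i \notin F^i\}$, whose joint probability is at most $\sum_{i=1}^N \mu^i(\sY^i \setminus F^i) < N\delta$, because the $y^i$-marginal of the joint law $\rP(dx,d{\bf y})$ is exactly $\mu^i$. Using that $c$ is bounded by some constant $\|c\|_\infty$ (Assumption~\ref{as1}(a)), I obtain $|J(\hat{\underline{\gamma}}) - J(\underline{\gamma})| \leq 2\|c\|_\infty N \delta$. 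Choosing $\delta < \varepsilon/(2\|c\|_\infty N)$ gives $J(\hat{\underline{\gamma}}) \leq J^* + 2\varepsilon$, and since $\varepsilon > 0$ is arbitrary the reverse inequality follows.

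The computation is routine; the point requiring care is verifying that the Dugundji extension is legitimately $\sU^i$-valued, which is precisely where convexity of $\sU^i$ in Assumption~\ref{as1}(b) is essential — a continuous extension into an arbitrary target space need not exist, but the theorem guarantees one landing in the convex set $\sU^i$. A secondary subtlety is the bookkeeping that Lusin's approximation, performed in each marginal $\mu^i$, transfers into small joint probability of disagreement; this is clean because each $\mu^i$ is the corresponding marginal of $\rP(dx,d{\bf y})$. I note that continuity of $c$ in $({\bf y},{\bf u})$ (the remaining part of Assumption~\ref{as1}(a)) is \emph{not} needed for this proposition — only boundedness — since the argument replaces the policy on a small set rather than perturbing the action continuously.
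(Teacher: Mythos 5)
Your proof is correct and takes essentially the same approach as the paper's: Lusin's theorem on each $\sY^i$ with the marginal $\mu^i$, the Dugundji extension theorem (relying on convexity of $\sU^i$) to obtain a continuous $\sU^i$-valued extension, and boundedness of $c$ to bound the cost discrepancy by $2\|c\|N\delta$; the paper merely phrases it as a sequence with $\delta = 1/k$ rather than an $\varepsilon$--$\delta$ argument. Your side observation that only boundedness (not continuity) of $c$ is used here is accurate and consistent with the paper's proof as well.
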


\begin{proof}
Let $\underline{\gamma} \in {\bf \Gamma}$. We prove that there exists a sequence $\bigl\{\underline{\gamma}_{k}\bigr\}_{k\geq1} \in {\bf \Gamma}_c$ such that $J(\underline{\gamma}_{k}) \rightarrow J(\underline{\gamma})$ as $k\rightarrow\infty$, which implies the proposition. Let $\mu^i$ denote the distribution of $y^i$.

For each $k\geq1$, by Lusin's theorem, there is a closed set $F_{k,i} \subset \sY^i$ such that $\mu^i\bigl( \sY^i \setminus F_{k,i} \bigr) < 1/k$ and the restriction of $\gamma^i$ to $F_{k,i}$ is continuous. Let us denote $\pi^i_k = \gamma^{i}\bigl|_{F_{k,i}}$, and so $\pi^i_{k}: F_{k,i} \rightarrow \sU^i$ is continuous. By the Dugundji extension theorem, there exists a continuous extension $\gamma^i_k: \sY^i \rightarrow \sU^i$ of $\pi^i_{k}$. Therefore, $\underline{\gamma}_{k} = (\gamma^1_k,\ldots,\gamma^N_k) \in {\bf \Gamma}_c$ and we have for $F_k \coloneqq \prod_{i=1}^N F_{k,i}$ the following
\begin{align}
\bigl| J(\underline{\gamma}) - &J(\underline{\gamma}_{k}) \bigr|
= \biggl| \int_{\sX \times \sY} \bigl[ c(x,{\bf y},\underline{\gamma}) - c(x,{\bf y},\underline{\gamma}_{k}) \bigr] \rP(dx,d{\bf y}) \biggr| \nonumber \\
&\leq \int_{\sX \times (\sY \setminus F_k)} \bigl| c(x,{\bf y},\underline{\gamma}) - c(x,{\bf y},\underline{\gamma}_{k}) \bigr| \text{  }\rP(dx,d{\bf y}) \nonumber \\
&\leq  2 \|c\| \text{ } \rP\bigl(\sX \times (\sY \setminus F_k)\bigr), \nonumber
\end{align}
where $\|c\|$ is the maximum absolute value that $c$ takes. Since $\rP\bigl(\sX \times (\sY \setminus F_k)\bigr) \leq \sum_{i=1}^N \mu^i\bigl( \sY^i \setminus F_{k,i} \bigr) \leq N/k$, we have $\lim_{k\rightarrow\infty} J(\underline{\gamma}_{k}) = J(\underline{\gamma})$. This completes the proof.
\end{proof}

Let $d_i$ denote the metric on $\sY^i$. Since $\sY^i$ is compact, one can find a finite set $\sY_n^{i} \coloneqq \bigl\{ y_{i,1}, \ldots, y_{i,i_n} \bigr\} \subset \sY^i$ such that $\sY_n^{i}$ is an $1/n$-net in $\sY^i$; that is, for any $y \in \sY^i$ we have
\begin{align}
\min_{z \in \sY_n^{i}} d_i(y,z) < \frac{1}{n}. \nonumber
\end{align}
Define function $q_{n}^i$ mapping $\sY^i$ to $\sY_n^{i}$ by
\begin{align}
q_{n}^i(y) \coloneqq \argmin_{z \in \sY_n^{i}} d_{i}(y,z), \nonumber
\end{align}
where ties are broken so that $q_n^i$ is measurable. In the literature, $q_{n}^i$ is called the nearest neighborhood quantizer \cite{GrNe98}. If $\sY^i=[-M,M]$ for some $M \in \R_+$, one can choose the finite set $\sY_n^{i}$ such that $q_{n}^i$ becomes a uniform quantizer.
For any $\gamma^i \in \Gamma^i$, we let $\gamma^{n,i}$ denote the strategy $\gamma^i \circ q_{n}^i$.
Define
\begin{align}
\Gamma_n^{i} \coloneqq  \Gamma^i \circ q_n^i.
\nonumber
\end{align}
We let ${\bf \Gamma}_n \coloneqq \prod_{i=1}^N \Gamma_n^{i}$. The following theorem states that an optimal (or almost optimal) policy can be approximated with arbitrarily small approximation error for the induced costs by policies in ${\bf \Gamma}_n$, for $n$ sufficiently large.

\begin{theorem}\label{thm1}
We have
\begin{align}
\lim_{n\rightarrow\infty} \inf_{\underline{\gamma} \in {\bf \Gamma}_n} J(\underline{\gamma}) = J^*. \nonumber
\end{align}
\end{theorem}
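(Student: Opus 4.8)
The plan is to combine Proposition~\ref{prop1} with the uniform continuity afforded by compactness. By Proposition~\ref{prop1}, for any $\varepsilon > 0$ I can fix a \emph{continuous} policy $\underline{\gamma} = (\gamma^1,\ldots,\gamma^N) \in {\bf \Gamma}_c$ with $J(\underline{\gamma}) < J^* + \varepsilon$. The quantized policy $\underline{\gamma}_n \coloneqq (\gamma^1 \circ q_n^1, \ldots, \gamma^N \circ q_n^N)$ lies in ${\bf \Gamma}_n$ by construction, so it suffices to show $J(\underline{\gamma}_n) \to J(\underline{\gamma})$ as $n \to \infty$. Since $\inf_{\underline{\gamma}' \in {\bf \Gamma}_n} J(\underline{\gamma}') \leq J(\underline{\gamma}_n)$ and the whole sequence of infima is bounded below by $J^*$ (as ${\bf \Gamma}_n \subset {\bf \Gamma}$), this yields $\limsup_n \inf_{{\bf \Gamma}_n} J \leq J^* + \varepsilon$, and letting $\varepsilon \to 0$ gives the result.

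To prove $J(\underline{\gamma}_n) \to J(\underline{\gamma})$, I would exploit that each $\gamma^i$ is continuous on the compact space $\sY^i$, hence uniformly continuous. Given $\delta > 0$, choose $n$ large enough that $1/n$ is smaller than the uniform-continuity modulus, so that $d_i(q_n^i(y^i), y^i) < 1/n$ forces $\gamma^i(q_n^i(y^i))$ to be close to $\gamma^i(y^i)$ for every $y^i \in \sY^i$ simultaneously. Thus $\underline{\gamma}_n({\bf y}) \to \underline{\gamma}({\bf y})$ pointwise on $\sY$ (indeed uniformly) as $n \to \infty$. Now fix $x$ and ${\bf y}$: by Assumption~\ref{as1}(a), $c(x,{\bf y},\cdot)$ is continuous in ${\bf u}$, so the integrand $c(x,{\bf y},\underline{\gamma}_n({\bf y}))$ converges to $c(x,{\bf y},\underline{\gamma}({\bf y}))$. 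Since $c$ is bounded by Assumption~\ref{as1}(a), the dominated convergence theorem gives $J(\underline{\gamma}_n) \to J(\underline{\gamma})$.

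One technical point to handle carefully is the interaction between the product structure and uniform continuity: each coordinate map $\gamma^i$ acts on its own observation $y^i$, and the quantizers $q_n^i$ operate coordinatewise, so the joint convergence $\underline{\gamma}_n({\bf y}) \to \underline{\gamma}({\bf y})$ is really the product of $N$ independent coordinatewise convergences, each controlled by the uniform continuity of the corresponding $\gamma^i$ on the compact $\sY^i$. This is routine but should be stated, since the continuity of $c$ in Assumption~\ref{as1}(a) is joint in the full action vector ${\bf u}$.

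I expect the main obstacle to be conceptual rather than computational: the crux is realizing that the quantization error must be transferred from the \emph{policy} to its \emph{argument} before continuity can be invoked. One cannot directly quantize an arbitrary measurable policy and hope for convergence, because a measurable function need not respect the fine grid. The resolution is precisely the order of operations established by Proposition~\ref{prop1} --- first replace the measurable policy by a continuous one incurring cost penalty at most $\varepsilon$, and only \emph{then} quantize the continuous policy, where uniform continuity on the compact observation space makes the quantization harmless in the limit. Everything else is an application of dominated convergence under Assumption~\ref{as1}.
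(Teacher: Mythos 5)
Your proof is correct and follows essentially the same route as the paper: take an $\varepsilon$-optimal continuous policy via Proposition~\ref{prop1}, quantize it to obtain a policy in ${\bf \Gamma}_n$, and pass to the limit using boundedness of $c$, its continuity in ${\bf u}$, and dominated convergence. The only cosmetic difference is that you invoke uniform continuity of each $\gamma^i$ on the compact $\sY^i$, whereas the paper needs only the pointwise convergence $q_n^i(y^i) \to y^i$ together with ordinary continuity, which already suffices for the dominated convergence step.
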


\begin{proof}
For any $\varepsilon$, let $\underline{\gamma}_{\varepsilon} = (\gamma_{\varepsilon}^1,\ldots,\gamma_{\varepsilon}^N) \in {\bf \Gamma}_c$ denote an $\varepsilon$-optimal continuous strategy. The existence of such a strategy follows from Proposition~\ref{prop1}. Then, we have
\begin{align}
\inf_{\underline{\gamma} \in {\bf \Gamma}_n} J(\underline{\gamma}) - J^* &=  \inf_{\underline{\gamma} \in {\bf \Gamma}_n} J(\underline{\gamma}) - \inf_{\underline{\gamma} \in {\bf \Gamma}_c} J(\underline{\gamma}) \text{ } \text{(by Proposition~\ref{prop1})}\nonumber \\
&\leq  J(\underline{\gamma}_{\varepsilon,n}) - \inf_{\underline{\gamma} \in {\bf \Gamma}_c} J(\underline{\gamma}) \nonumber \\
&\leq \varepsilon + \bigl( J(\underline{\gamma}_{\varepsilon,n}) - J(\underline{\gamma}_{\varepsilon}) \bigr), \nonumber
\end{align}
where $\underline{\gamma}_{\varepsilon,n} = (\gamma_{\varepsilon}^{n,1},\ldots,\gamma_{\varepsilon}^{n,N})$. Note that $c(x,{\bf y},\underline{\gamma}_{\varepsilon,n}({\bf y})) \rightarrow c(x,{\bf y},\underline{\gamma}_{\varepsilon}({\bf y}))$ as $n \rightarrow \infty$, for all $(x,{\bf y}) \in \sX\times\sY$ since $c$ is continuous in ${\bf u}$ and $\underline{\gamma}_{\varepsilon} \in {\bf \Gamma}_c$. Hence, by the dominated convergence theorem
the second term in the last expression converges to zero as $n\rightarrow\infty$. Since $\varepsilon$ is arbitrary, this completes the proof.
\end{proof}

For each $n$, define stochastic kernels $W_n^{i}(\,\cdot\,|x)$ on $\sY_n^{i}$ given $\sX$ as follows:
\begin{align}
W_n^{i}(\,\cdot\,|x) &\coloneqq \sum_{j=1}^{i_n} W(\S_{i,j}^n|x) \delta_{y_{i,j}}(\,\cdot\,), \nonumber
\end{align}
where $\S_{i,j}^n \coloneqq \bigl\{y \in \sY^i: q_n^i(y) = y_{i,j} \bigr\}$. Let $\Pi_n^{i} \coloneqq \bigl\{\pi^i: \sY_n^{i} \rightarrow \sU^i, \text{$\pi^i$ measurable}\bigr\}$ and  ${\bf \Pi}_n \coloneqq \prod_{i=1}^N \Pi_n^{i}$. Define $J_n: {\bf \Pi}_n \rightarrow [0,\infty)$ as
\begin{align}
J_n(\underline{\pi}) \coloneqq \int_{\sX \times \sY_n} c(x,{\bf y},{\bf u}) \rP_n(dx,d{\bf y}), \nonumber
\end{align}
where $\underline{\pi} = (\pi^1,\ldots,\pi^N)$, ${\bf u} = \underline{\pi}({\bf y})$, $\sY_n = \prod_{i=1}^N \sY_n^{i}$, and $\rP_n(dx,d{\bf y}) = \rP(dx) \prod_{i=1}^N W_n^{i}(dy^{i}|x)$.

\begin{lemma}\label{convergence-comp}
Let $\{\underline{\pi}_{n}\}$ be a sequence of strategies such that $\underline{\pi}_{n} \in {\bf \Pi}_{n}$. For each $n$, define $\underline{\gamma}_{n} \coloneqq \underline{\pi}_{n} \circ q_n$, where $q_n \coloneqq (q_n^1,\ldots,q_n^N)$. Then, we have
\begin{align}
\lim_{n\rightarrow\infty} |J_n(\underline{\pi}_n) - J(\underline{\gamma}_n)| =  0. \nonumber
\end{align}
\end{lemma}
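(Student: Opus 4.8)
The plan is to express both $J_n(\underline{\pi}_n)$ and $J(\underline{\gamma}_n)$ as integrals against the \emph{same} measure on $\sX \times \sY$, and then compare the integrands. The key observation is that for each $i$ the kernel $W_n^i(\,\cdot\,|x)$ is precisely the image (pushforward) of $W^i(\,\cdot\,|x)$ under $q_n^i$, since $W_n^i(\{y_{i,j}\}|x) = W^i(\S_{i,j}^n|x) = W^i\bigl((q_n^i)^{-1}(\{y_{i,j}\})\,\big|\,x\bigr)$. Because a product of pushforwards is the pushforward of the product, $\rP_n$ is exactly the pushforward of $\rP$ under the map $(x,{\bf y}) \mapsto (x, q_n({\bf y}))$. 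The change-of-variables formula then yields
\[
J_n(\underline{\pi}_n) = \int_{\sX \times \sY} c\bigl(x, q_n({\bf y}), \underline{\pi}_n(q_n({\bf y}))\bigr)\, \rP(dx,d{\bf y}),
\]
while, since $\underline{\gamma}_n = \underline{\pi}_n \circ q_n$ by definition,
\[
J(\underline{\gamma}_n) = \int_{\sX \times \sY} c\bigl(x, {\bf y}, \underline{\pi}_n(q_n({\bf y}))\bigr)\, \rP(dx,d{\bf y}).
\]

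Subtracting, I would note that the \emph{action} argument ${\bf u} := \underline{\pi}_n(q_n({\bf y}))$ is identical in both integrands, so the two costs differ only in the observation slot:
\[
J_n(\underline{\pi}_n) - J(\underline{\gamma}_n) = \int_{\sX \times \sY} \bigl[\, c(x, q_n({\bf y}), {\bf u}) - c(x, {\bf y}, {\bf u}) \,\bigr]\, \rP(dx,d{\bf y}).
\]
Here $d_i\bigl(q_n^i(y^i), y^i\bigr) < 1/n$ by the $1/n$-net property, so $q_n({\bf y}) \to {\bf y}$ uniformly as $n \to \infty$. The intended mechanism is then to dominate the integrand by the constant $2\|c\|$, which is $\rP$-integrable since $\rP$ is a probability measure and $c$ is bounded by Assumption~\ref{as1}(a), and to pass to the limit by the dominated convergence theorem once the integrand is shown to vanish.

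\textbf{Main obstacle.} The delicate point is precisely showing that $c(x, q_n({\bf y}), {\bf u}) - c(x, {\bf y}, {\bf u}) \to 0$ for the relevant actions. The difficulty is that ${\bf u} = \underline{\pi}_n(q_n({\bf y}))$ changes with $n$ and, since $\sU$ is only convex and need not be compact (Assumption~\ref{as1}(b)), the actions encountered may escape every compact subset of $\sU$; continuity of $c(x,\cdot,{\bf u})$ at a \emph{fixed} ${\bf u}$ does not control the difference when the action argument drifts. The resolution must come from the \emph{joint} continuity of $c$ in $({\bf y},{\bf u})$ together with the compactness of $\sY$: on a product $\sY \times K$ with $K \subset \sU$ compact, $c(x,\cdot,\cdot)$ is uniformly continuous for fixed $x$, and the vanishing quantization diameter then forces $\sup_{{\bf u}\in K}|c(x,q_n({\bf y}),{\bf u}) - c(x,{\bf y},{\bf u})| \to 0$. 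I therefore expect the crux to be ensuring that the actions $\underline{\pi}_n(q_n({\bf y}))$ remain within a compact set (or, equivalently, upgrading the continuity-in-${\bf y}$ hypothesis to one that is uniform in ${\bf u}$); without such control over the action argument the pointwise convergence of the integrand, and hence the dominated-convergence step, cannot be guaranteed for an arbitrary sequence $\{\underline{\pi}_n\}$.
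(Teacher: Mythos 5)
Your setup is exactly the paper's proof of Lemma~\ref{convergence-comp}: the paper uses the pushforward identity you establish to write
\begin{align}
J_n(\underline{\pi}_n) - J(\underline{\gamma}_n) = \int_{\sX \times \sY} \bigl[ c(x,q_n({\bf y}),\underline{\gamma}_n({\bf y})) - c(x,{\bf y},\underline{\gamma}_n({\bf y})) \bigr] \, \rP(dx,d{\bf y}), \nonumber
\end{align}
and then asserts that this vanishes ``by the dominated convergence theorem and the fact that $c$ is bounded and continuous in ${\bf y}$.'' So up to the final limiting step you have reproduced the paper's argument verbatim.

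The obstacle you flag at that final step is genuine, and the paper's one-line invocation of dominated convergence does not resolve it. Since the action argument $\underline{\gamma}_n({\bf y}) = \underline{\pi}_n(q_n({\bf y}))$ varies with $n$ and ${\bf \Pi}_n$ places no restriction on the ranges of the $\underline{\pi}_n$, continuity of $c$ in ${\bf y}$ at each fixed ${\bf u}$ does not yield pointwise convergence of the integrand. In fact the conclusion of the lemma can fail outright under Assumption~\ref{as1} alone: take $N=1$, $\sX$ a singleton, $\sY = [0,1]$ with the uniform observation distribution, $\sU = \R$, $c(x,y,u) = 1 + \sin(uy)$, and let $q_n$ be the uniform quantizer with cell midpoints $y_j$; choosing $\pi_n(y_j) = u_{n,j}$ with $u_{n,j} \geq n^2$ and $\sin(u_{n,j} y_j) = 1$ gives $J_n(\underline{\pi}_n) \rightarrow 2$ while $J(\underline{\gamma}_n) \rightarrow 1$, since $\int_{\text{cell } j} \sin(u_{n,j} y)\, dy = O(1/u_{n,j})$. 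So the lemma, as stated for arbitrary sequences in ${\bf \Pi}_n$, needs exactly the additional control you identify: either equicontinuity of the family $\{c(x,\cdot,{\bf u}) : {\bf u} \in \sU\}$ or a compactness restriction on the policies' ranges. The paper itself implicitly concedes this in the more general unbounded setting, where the analogous Lemma~\ref{convergence-unbounded} is stated only for $\underline{\pi}_{l,n} \in {\bf \Pi}_{l,n}^G$ with $G$ compact; there joint continuity of $c$ on a compact product supplies the required uniformity, exactly as you anticipate. The same restriction should be read into Lemma~\ref{convergence-comp} (and into its application to arbitrary near-optimal finite-model policies in Theorem~\ref{thm2}) for the argument to be complete. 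In short: your proof attempt is not missing an idea that the paper supplies; rather, you have correctly located a gap that the paper's own proof glosses over.
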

\begin{proof}
We have
\begin{align}
&|J_{n}(\underline{\pi}_n) - J(\underline{\gamma}_n)| \nonumber \\
&\phantom{xxxxx}= \bigg| \int_{\sX \times \sY} c(x,q_{n}({\bf y}),\underline{\gamma}_n) \text{ } d\rP - \int_{\sX \times \sY} c(x,{\boldsymbol y},\underline{\gamma}_n) \text{ } d\rP \biggr| \nonumber
\end{align}
which converges to zero as $n\rightarrow\infty$ by dominated convergence theorem and the fact that $c$ is bounded and continuous in ${\bf y}$.
\end{proof}

The following theorem is the main result of this section which is a consequence of Theorem~\ref{thm1}. It states that to compute a near optimal strategy for the original team problem, it is sufficient to compute an optimal (or almost optimal if optimal does not exist) strategy for the team problem described above.

\begin{theorem}\label{thm2}
For any $\varepsilon>0$, there exists a sufficiently large $n$ such that the optimal (or almost optimal) policy $\underline{\pi}^{*} \in {\bf \Pi}_n$ for the cost $J_n$ is $\varepsilon$-optimal for the original team problem when $\underline{\pi}^{*} = (\pi^{1*},\ldots,\pi^{N*})$ is extended to $\sY$ via $\gamma^i = \pi^{i*} \circ q_{n}^i$.
\end{theorem}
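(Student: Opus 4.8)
The plan is to reduce everything to the two facts already in hand: Theorem~\ref{thm1}, which says the best induced cost over the quantized class ${\bf \Gamma}_n$ tends to $J^*$, and Lemma~\ref{convergence-comp}, which says that the finite-model cost $J_n(\underline{\pi})$ and the induced cost $J(\underline{\pi}\circ q_n)$ agree asymptotically along any sequence. Fix for each $n$ a policy $\underline{\pi}_n^*\in{\bf \Pi}_n$ that is $1/n$-optimal for $J_n$ (an exact minimizer if one exists), and set $\underline{\gamma}_n^*\coloneqq\underline{\pi}_n^*\circ q_n$, the extension named in the statement. I would show $J(\underline{\gamma}_n^*)\to J^*$; since $J(\underline{\gamma}_n^*)\ge J^*$ is automatic ($J^*$ being the global infimum), only $\limsup_n J(\underline{\gamma}_n^*)\le J^*$ needs work, and this yields $\varepsilon$-optimality for all sufficiently large $n$.

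First I would record the elementary correspondence between the two policy classes. Because $q_n^i$ takes values in the finite grid $\sY_n^{i}$, every $\underline{\gamma}\in{\bf \Gamma}$ satisfies $\underline{\gamma}\circ q_n=(\underline{\gamma}|_{\sY_n})\circ q_n$ with $\underline{\gamma}|_{\sY_n}\in{\bf \Pi}_n$, while any $\underline{\pi}\in{\bf \Pi}_n$ extends to some measurable $\underline{\gamma}\in{\bf \Gamma}$ with $\underline{\gamma}\circ q_n=\underline{\pi}\circ q_n$. Hence ${\bf \Gamma}_n=\{\underline{\pi}\circ q_n:\underline{\pi}\in{\bf \Pi}_n\}$ and the induced costs coincide, so Theorem~\ref{thm1} reads $b_n\coloneqq\inf_{\underline{\pi}\in{\bf \Pi}_n}J(\underline{\pi}\circ q_n)\to J^*$.

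Next I would compare $a_n\coloneqq\inf_{\underline{\pi}\in{\bf \Pi}_n}J_n(\underline{\pi})$ with $b_n$ via the lemma along near-minimizers. Choosing $\tilde{\underline{\pi}}_n\in{\bf \Pi}_n$ with $J(\tilde{\underline{\pi}}_n\circ q_n)\le b_n+1/n$ gives $a_n\le J_n(\tilde{\underline{\pi}}_n)=J(\tilde{\underline{\pi}}_n\circ q_n)+o(1)\le b_n+o(1)$, by Lemma~\ref{convergence-comp} applied to the sequence $\{\tilde{\underline{\pi}}_n\}$. Applying the same lemma to $\{\underline{\pi}_n^*\}$, one gets $J(\underline{\gamma}_n^*)=J_n(\underline{\pi}_n^*)+o(1)\le a_n+1/n+o(1)\le b_n+o(1)\to J^*$. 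Together with $J(\underline{\gamma}_n^*)\ge J^*$ this yields $J(\underline{\gamma}_n^*)\to J^*$, which is the claim.

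The hard part is not any single estimate but the interchange of the limit with the infimum over ${\bf \Pi}_n$: Lemma~\ref{convergence-comp} controls $|J_n(\underline{\pi})-J(\underline{\pi}\circ q_n)|$ only along a prescribed sequence, not uniformly over the growing class ${\bf \Pi}_n$, so one cannot simply move $\lim_n$ through $\inf_{{\bf \Pi}_n}$. The device that resolves this, used twice above, is to keep the limit outside the infimum and evaluate the lemma only along the two explicit near-minimizing sequences $\{\underline{\pi}_n^*\}$ and $\{\tilde{\underline{\pi}}_n\}$, which is exactly the regime the lemma covers; the remaining ingredients (the grid correspondence and the trivial lower bound $J(\underline{\gamma}_n^*)\ge J^*$) are routine.
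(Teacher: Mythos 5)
Your proof is correct and follows essentially the same route as the paper: both arguments combine Theorem~\ref{thm1} with Lemma~\ref{convergence-comp} applied only along explicit near-minimizing sequences (one for the induced cost over ${\bf \Gamma}_n$, one for the finite-model cost $J_n$), together with the restriction/extension correspondence between ${\bf \Pi}_n$ and ${\bf \Gamma}_n$ and the trivial bound $J \geq J^*$. The paper merely packages the same two estimates as $\limsup_n \inf_{{\bf \Pi}_n} J_n \leq J^*$ and $\liminf_n \inf_{{\bf \Pi}_n} J_n \geq J^*$ rather than bounding $J(\underline{\gamma}_n^*)$ directly.
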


\begin{proof}
Fix any $\varepsilon>0$. By Theorem~\ref{thm1}, there exists a sequence of strategies $\{\underline{\gamma}_n\}$ such that
$\underline{\gamma}_n \in {\bf \Gamma}_n$ ($n\geq1$) and $ \lim_{n\rightarrow\infty} J(\underline{\gamma}_n) = J^*$. Define $\underline{\pi}_n$ as the restriction of $\underline{\gamma}_n$ to the set $\sY_n$. Then, we have
\begin{align}
J^* &= \lim_{n\rightarrow\infty} J(\underline{\gamma}_n) \nonumber \\
&= \lim_{n\rightarrow\infty} J_n(\underline{\pi}_n) \text{ } \text{(by Lemma~\ref{convergence-comp})} \nonumber \\
&\geq \limsup_{n\rightarrow\infty} \inf_{\underline{\pi} \in {\bf \Pi}_{n}} J_n(\underline{\pi}). \nonumber
\end{align}
For the reverse inequality, for each $n\geq1$, let $\underline{\pi}_n \in {\bf \Pi}_n$ be such that $J_n(\underline{\pi}_n) < \inf_{\underline{\pi} \in {\bf \Pi}_n} J_n(\underline{\pi}) + 1/n$. Define $\underline{\gamma}_n \coloneqq \underline{\pi}_n \circ q_n$.
Then, we have
\begin{align}
\liminf_{n\rightarrow\infty} \inf_{\underline{\pi} \in {\bf \Pi}_n} J_n(\underline{\pi}) &= \liminf_{n\rightarrow\infty} J_n(\underline{\pi}_n) \nonumber \\
&= \liminf_{n\rightarrow\infty} J(\underline{\gamma}_n) \text{ } \text{(by Lemma~\ref{convergence-comp})} \nonumber \\
&\geq J^*. \nonumber
\end{align}
This completes the proof.
\end{proof}

\subsection{Approximation of Static Teams with Noncompact Observation Spaces and Unbounded Cost}\label{unboundedcase}

In this section, we consider the finite observation approximation of static team problems with noncompact observation spaces and unbounded cost function.
We impose the following assumptions on the components of the model.

\begin{assumption}\label{newas1}
\begin{itemize}
\item [ ]
\item [(a)] The cost function $c$ is continuous in ${\bf u}$ and ${\bf y}$ for any fixed $x$. In addition, it is bounded on any compact subset of $\sX \times \sY \times \sU$. 
\item [(b)] For each $i$, $\sU^i$ is a closed and convex subset of a completely metrizable locally convex vector space.
\item [(c)] For each $i$, $\sY^i$ is locally compact.
\item [(d)] For any subset $G$ of $\sU$, we let $w_G(x,\by) \coloneqq \sup_{\bu \in G} c(x,\by,\bu)$. We assume that $w_G$ is integrable with respect to $\rP(dx,d\by)$, for any compact subset $G$ of $\sU$ of the form $G = \prod_{i=1}^N G^i$.
\item [(e)] For any $\underline{\gamma} \in {\bf \Gamma}$ with $J(\underline{\gamma})<\infty$ and each $i \in {\cal N}$, there exists $u^{i,*} \in \sU^i$ such that we have $J(\underline{\gamma}^{-i},\gamma^i_{u^{i,*}}) < \infty$, where $\gamma^i_{u^{i,*}} \equiv u^{i,*}$.
\end{itemize}
\end{assumption}

\begin{remark}
Note that Assumption~\ref{newas1}-(d),(e) hold if the cost function is bounded. Therefore, if the static team problem satisfies Assumption~\ref{as1}, then Assumption~\ref{newas1} (except (b)) holds as well. Hence, the results derived in this section \emph{almost} includes the results proved in Section~\ref{boundedcase} as a particular case. However, since the analysis in this section is somewhat involved, we presented the compact and bounded case as a separate section.
\end{remark}

The following result states that there exists a near optimal strategy whose range is convex and compact. In what follows, for any compact subset $G$ of $\sU$, we let
\begin{align}
{\bf \Gamma}_G \coloneqq \bigl\{ \underline{\gamma} \in {\bf \Gamma}: \underline{\gamma}(\sY) \subset G \bigr\}. \nonumber
\end{align}

\begin{lemma}\label{newprop2}
Suppose Assumption~\ref{newas1}-(a),(b),(c),(e) hold. Then, for any $\varepsilon>0$, there exists a compact subset $G$ of $\sU$ of the form $G = \prod_{i=1}^N G^i$, where each $G^i$ is convex and compact, such that
\begin{align}
\inf_{\underline{\gamma} \in {\bf \Gamma}_G} J(\underline{\gamma}) < J^* + \varepsilon. \nonumber
\end{align}
\end{lemma}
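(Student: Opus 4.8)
The plan is to start from a nearly optimal policy of \emph{finite} cost and then compress the range of each agent's strategy into a compact convex set one agent at a time, keeping the cost increment at each step under control. Throughout I assume $J^*<\infty$, since otherwise there is no finite-cost policy to work with and the approximation problem is vacuous. First I would fix $\underline{\gamma}\in{\bf \Gamma}$ with $J(\underline{\gamma})<J^*+\varepsilon/2<\infty$.

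For a single agent $i$, I would push the observation law $\mu^i$ forward by $\gamma^i$ to obtain a Borel probability measure $\nu^i$ on $\sU^i$. Since $\sU^i$ is a Borel space (hence Polish) that is also a closed convex subset of a completely metrizable locally convex space, $\nu^i$ is tight, so there exist increasing compact sets $\hat K^i_m\subset\sU^i$ with $\nu^i(\hat K^i_m)\to 1$. Letting $u^{i,*}$ be the fall-back action furnished by Assumption~\ref{newas1}-(e), I would pass to $G^i_m:=\overline{\mathrm{conv}}\bigl(\hat K^i_m\cup\{u^{i,*}\}\bigr)$, which by Theorem~\ref{clconv} (applied with the ambient space replaced by the closed convex subset $\sU^i$) is compact, convex, and contained in $\sU^i$. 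The truncated strategy $\gamma^i_m:=\gamma^i$ on $\{\gamma^i\in\hat K^i_m\}$ and $\gamma^i_m:=u^{i,*}$ elsewhere is then measurable, has range inside $G^i_m$, and agrees with $\gamma^i$ for all large $m$ at $\mu^i$-a.e.\ point.

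Next I would establish continuity of $J$ under this single-agent truncation, namely $J(\underline{\gamma}^{-i},\gamma^i_m)\to J(\underline{\gamma})$ as $m\to\infty$. Because $c$ is continuous in $\bu$ and $\gamma^i_m\to\gamma^i$ pointwise a.e., the integrands converge a.e.; the only real issue is finding an integrable majorant in this unbounded-cost setting. Here the crucial point is that agent $i$'s action is \emph{always} either $\gamma^i(y^i)$ or the fixed constant $u^{i,*}$, so the integrand is dominated by $c(x,\by,\underline{\gamma}(\by))+c\bigl(x,\by,(\underline{\gamma}^{-i},\gamma^i_{u^{i,*}})(\by)\bigr)$. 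The first summand is integrable since $J(\underline{\gamma})<\infty$, and the second is integrable precisely by Assumption~\ref{newas1}-(e). Dominated convergence then yields the limit, and I would select an index $m_i$ for which the cost increases by less than $\varepsilon/(2N)$.

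Finally I would iterate over $i=1,\dots,N$: after truncating agents $1,\dots,k-1$ the resulting policy $\underline{\gamma}^{(k-1)}$ still has finite cost, so Assumption~\ref{newas1}-(e) again supplies a fall-back action for agent $k$ and the single-agent step applies verbatim to the $k$-th coordinate (which is still the original $\gamma^k$). After $N$ steps the resulting policy has range contained in $G=\prod_{i=1}^N G^i_{m_i}$ with each factor convex and compact, and its cost exceeds $J(\underline{\gamma})$ by at most $\varepsilon/2$, so $\inf_{\underline{\gamma}\in{\bf \Gamma}_G}J(\underline{\gamma})\le J(\underline{\gamma})+\varepsilon/2<J^*+\varepsilon$. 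I expect the main obstacle to be exactly the domination in the third step: a naive simultaneous truncation onto a growing range offers no fixed integrable majorant, and it is the per-agent constant-action fall-back of (e) that makes the argument go through; the one-agent-at-a-time scheme is what lets me invoke (e) repeatedly along policies that remain of finite cost.
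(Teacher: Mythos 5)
Your proposal is correct and follows essentially the same route as the paper's proof: truncate one agent at a time onto a compact set, using the constant fall-back action $u^{i,*}$ from Assumption~\ref{newas1}-(e) outside that set, control the cost increment per agent (at most $\varepsilon/(2N)$) via integrability of $c$ along $\underline{\gamma}$ and $(\underline{\gamma}^{-i},\gamma^i_{u^{i,*}})$ together with tightness, and invoke Theorem~\ref{clconv} to replace each compact range by a compact convex one. Your use of the pushforward measure $\nu^i$ and dominated convergence is just a mildly repackaged version of the paper's tail-integral bound via tightness of finite measures on Borel spaces, and your explicit remarks (finiteness of cost persisting along the iteration, applying Theorem~\ref{clconv} inside the closed convex subset $\sU^i$) make the same argument slightly more careful.
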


\begin{proof}
Fix any $\varepsilon>0$. Let $\underline{\gamma} \in {\bf \Gamma}$ with $J(\underline{\gamma}) < J^* + \varepsilon/2$. We construct the desired $G$ iteratively.

By Assumption~\ref{newas1}-(e) there exists $u^{1,*} \in \sU^1$ such that $J(\underline{\gamma}^{-1},\gamma^1_{u^{1,*}})<\infty$. Let $G^1 \subset \sU^1$ be a compact set containing $u^{1,*}$. We define
\begin{align}
\tgamma(y^1) = \begin{cases}
\gamma^1(y^1),   &\text{ if } \gamma^1(y^1) \in G^1  \\
u^{1,*} ,  &\text{ otherwise}.
\end{cases}\nonumber
\end{align}
Define also $\underline{\gamma}_1 \coloneqq (\tgamma^1,\gamma^2,\ldots,\gamma^N)$, $M_1 \coloneqq \bigl\{ y^1 \in \sY^1: \gamma^1(y^1) \in G^1 \bigr\}$, and $\tilde{u}^1 = \tgamma(y^1)$. Then, we have
\begin{align}
&|J(\underline{\gamma}) - J(\underline{\gamma}_1)| \nonumber \\
&= \biggl| E\bigl[ c(x,\by,\bu) 1_{\{y^1 \in M_1\}} \bigr] + E\bigl[ c(x,\by,\bu) 1_{\{y^1 \notin M_1\}} \bigr] \nonumber \\
&\phantom{xxxxxxxxxxx}- E\bigl[ c(x,\by,\bu^{-1},\tilde{u}^1) 1_{\{y^1 \in M_1\}} \bigr] \nonumber \\
&\phantom{xxxxxxxxxxxxxxxxxx}- E\bigl[ c(x,\by,\bu^{-1},\tilde{u}^1) 1_{\{y^1 \notin M_1\}} \bigr] \biggr| \nonumber \\
&\leq E\bigl[ c(x,\by,\bu) 1_{\{y^1 \notin M_1\}} \bigr] + E\bigl[ c(x,\by,\bu^{-1},\tilde{u}^1) 1_{\{y^1 \notin M_1\}} \bigr] \nonumber \\
&= \int_{\sX\times\sY\times\sU^{-1}\times (G^1)^c} c(x,\by,\bu) \text{ } \delta_{\underline{\gamma}}(d\bu) \text{ } \rP(dx,d\by) \nonumber \\
&\phantom{xxx}+ \int_{\sX\times\sY\times\sU^{-1}\times (G^1)^c} c(x,\by,\bu) \text{ } \delta_{(\underline{\gamma}^{-1},\gamma^1_{u^{1,*}})}(d\bu) \text{ } \rP(dx,d\by) \nonumber,
\end{align}
where $D^c$ denotes the complement of the set $D$, $\delta_z$ denotes the point mass at $z$, and $\sU^{-1} = \prod_{i=2}^N \sU^i$. Recall that $J(\underline{\gamma}^{-1},\gamma^1_{u^{1,*}}) < \infty$ by Assumption~\ref{newas1}-(e). Hence, the last expression can be made smaller than $\frac{\varepsilon}{2N}$ by properly choosing $G^1$ since $\sU^1$ is a Borel space \cite[Theorem 3.2]{Par67}. Since the closed convex hull of the set $G^1$ is compact by Theorem~\ref{clconv}, we can indeed take $G^1$ convex without loss of generality. By replacing $\underline{\gamma}$ with $\underline{\gamma}_1$ and applying the same method as above, we can obtain $\underline{\gamma}_2$, and a convex and compact $G^2 \subset \sU^2$ such that $|J(\underline{\gamma}_1)- J(\underline{\gamma}_2)| \leq \frac{\varepsilon}{2N}$ and $\gamma^2(\sY^2) \subset G^2$.

Continuing this way , we obtain $G = \prod_{i=1}^N G^i$ and $\underline{\gamma}_N \in {\bf \Gamma}_G$ such that $\bigl| J(\underline{\gamma}) - J(\underline{\gamma}_N) \bigr| < \varepsilon/2$, where $G^i$ is convex and compact for all $i=1,\ldots,N$. Hence, we have $J(\underline{\gamma}_N) < J^* + \varepsilon$, completing the proof.
\end{proof}


Recall that ${\bf \Gamma}_c$ denotes the set of continuous strategies. For any $G \subset \sU$, we define ${\bf \Gamma}_{c,G} \coloneqq {\bf \Gamma}_c \cap {\bf \Gamma}_G$; that is, ${\bf \Gamma}_{c,G}$ is the set of continuous strategies having range inside $G$.

\begin{proposition}\label{newprop1}
Suppose Assumption~\ref{newas1} holds. Then, for any $\varepsilon>0$, there exists a compact subset $G$ of $\sU$ of the form $G = \prod_{i=1}^N G^i$, where each $G^i$ is convex and compact, such that
\begin{align}
\inf_{\underline{\gamma} \in {\bf \Gamma}_{c,G}} J(\underline{\gamma}) < J^* + \varepsilon. \nonumber
\end{align}
\end{proposition}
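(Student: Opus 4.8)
The plan is to combine the two approximation results already established: Lemma~\ref{newprop2}, which produces a near-optimal strategy whose range lies in a convex compact product set $G$, and the continuity argument of Proposition~\ref{prop1}, which approximates measurable strategies by continuous ones. The target statement asks for a strategy that is simultaneously continuous \emph{and} range-restricted to such a $G$, so the natural approach is to apply Lemma~\ref{newprop2} first and then carefully mimic the Lusin/Dugundji construction of Proposition~\ref{prop1} while keeping the range inside $G$.

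First I would fix $\varepsilon > 0$ and invoke Lemma~\ref{newprop2} (whose hypotheses (a),(b),(c),(e) are all available under Assumption~\ref{newas1}) to obtain a convex compact $G = \prod_{i=1}^N G^i$ and a strategy $\underline{\gamma} \in {\bf \Gamma}_G$ with $J(\underline{\gamma}) < J^* + \varepsilon/2$. Next I would run the smoothing argument on this $\underline{\gamma}$: for each $i$ and each $k$, Lusin's theorem (Theorem~\ref{lusin}) gives a closed set $F_{k,i} \subset \sY^i$ with $\mu^i(\sY^i \setminus F_{k,i}) < 1/k$ on which $\gamma^i$ is continuous, and since $\gamma^i$ maps into the convex set $G^i$ (itself a convex subset of a locally convex space by Assumption~\ref{newas1}-(b)), the Dugundji extension theorem (Theorem~\ref{dugundji}) applied with $\sE_2 = G^i$ yields a continuous extension $\gamma^i_k : \sY^i \to G^i$. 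The crucial point, and the reason the range-restriction survives, is that Dugundji extends \emph{into the same convex target} $G^i$, so the extended strategy still satisfies $\underline{\gamma}_k(\sY) \subset G$, i.e.\ $\underline{\gamma}_k \in {\bf \Gamma}_{c,G}$.

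It remains to show $J(\underline{\gamma}_k) \to J(\underline{\gamma})$ as $k \to \infty$. Here the estimate from Proposition~\ref{prop1} must be replaced by an integrable-dominating-function argument rather than the crude $2\|c\|$ bound, since $c$ is now unbounded. On the set $F_k = \prod_i F_{k,i}$ the integrand $c(x,\by,\underline{\gamma}) - c(x,\by,\underline{\gamma}_k)$ vanishes because $\gamma^i_k$ agrees with $\gamma^i$ there; off $F_k$ both $\underline{\gamma}$ and $\underline{\gamma}_k$ take values in $G$, so the integrand is dominated pointwise by $2 w_G(x,\by)$, which is $\rP$-integrable by Assumption~\ref{newas1}-(d). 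Since $\rP(\sX \times (\sY \setminus F_k)) \le \sum_i \mu^i(\sY^i \setminus F_{k,i}) \le N/k \to 0$, I would conclude $\lim_k J(\underline{\gamma}_k) = J(\underline{\gamma})$ by dominated convergence applied to $2 w_G \cdot 1_{\sY \setminus F_k}$. Choosing $k$ large enough that $|J(\underline{\gamma}_k) - J(\underline{\gamma})| < \varepsilon/2$ gives $J(\underline{\gamma}_k) < J^* + \varepsilon$ with $\underline{\gamma}_k \in {\bf \Gamma}_{c,G}$.

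The main obstacle I anticipate is not the convergence estimate but ensuring the Dugundji extension preserves the range constraint: the extension theorem is exactly what makes this work, since it extends continuously into any convex target set, and without convexity and completeness of $G^i$ (guaranteed by Assumption~\ref{newas1}-(b) together with Theorem~\ref{clconv}) one could not keep the extended strategy inside the compact $G$ while maintaining continuity. A secondary technical point worth checking is measurability/integrability of $w_G$ along the approximating sequence, but this is handled uniformly by Assumption~\ref{newas1}-(d) precisely because every $\underline{\gamma}_k$ has range in the \emph{same} fixed $G$.
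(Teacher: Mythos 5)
Your proposal is correct and follows essentially the same route as the paper's proof: apply Lemma~\ref{newprop2} to get a near-optimal $\underline{\gamma} \in {\bf \Gamma}_G$, then use Lusin's theorem and the Dugundji extension with target $G^i$ (the key point you correctly identify), and bound the error off the Lusin sets by $2\int w_G\,d\rP$. The only cosmetic difference is your final limit step: since the Lusin sets $F_{k,i}$ need not be nested, the indicators $1_{\sY\setminus F_k}$ need not converge pointwise, so the clean justification is the one the paper uses --- absolute continuity of the finite measure $w_G(x,{\bf y})\,\rP(dx,d{\bf y})$ with respect to $\rP$ --- rather than the pointwise dominated convergence theorem.
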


\begin{proof}

Fix any $\varepsilon>0$. By Lemma~\ref{newprop2}, there exists a compact subset $G = \prod_{i=1}^N G^i$ of $\sU$, where $G^i$ is convex and compact, and $\underline{\gamma} \in {\bf \Gamma}_G$ such that
\begin{align}
J(\underline{\gamma}) < J^* + \frac{\varepsilon}{2}. \nonumber
\end{align}
Recall that $\mu^i$ denotes the distribution of $y^i$.

Let $\delta > 0$ which will be specified later. Analogous to the proof of Proposition~\ref{prop1}, we construct a continuous strategy $\underline{\tgamma}$ which is \emph{almost} equal to $\underline{\gamma}$. For each $i$, Lusin's theorem implies the existence of a closed set $F_{i} \subset \sY^i$ such that $\mu^i\bigl( \sY^i \setminus F_{i} \bigr) < \delta$ and the restriction, denoted by $\pi^i$, of $\gamma^i$ on $F_{i}$ is continuous.
By the Dugundji extension theorem there exists a continuous extension $\tgamma^i: \sY^i \rightarrow G^i$ of $\pi^i$. Therefore, $\underline{\tgamma} = (\tgamma^1,\ldots,\tgamma^N) \in {\bf \Gamma}_{c,G}$. Let $F \coloneqq \prod_{i=1}^N F_{i}$. Then, we have
\begin{align}
\rP\bigl( \sX \times (\sY\setminus F) \bigr) &\leq \sum_{i=1}^N \rP\bigl( \sX \times \sY^{-i} \times (\sY^i \setminus F_i) \bigr) \nonumber \\
&\leq \sum_{i=1}^N \delta = N \delta. \nonumber
\end{align}
Hence, we obtain
\begin{align}
\bigl| J(\underline{\gamma}) - &J(\underline{\tgamma}) \bigr|
= \biggl| \int_{\sX \times \sY} \bigl[ c(x,{\bf y},\underline{\gamma}) - c(x,{\bf y},\underline{\tgamma}) \bigr] \rP(dx,d{\bf y}) \biggr| \nonumber \\
&\leq \int_{\sX \times (\sY \setminus F)} \bigl[ c(x,{\bf y},\underline{\gamma}) + c(x,{\bf y},\underline{\tgamma}) \bigr] \text{  }\rP(dx,d{\bf y}) \nonumber \\
&\leq  2 \int_{\sX \times (\sY \setminus F)} w_G(x,{\bf y}) \text{  }\rP(dx,d{\bf y}). \nonumber
\end{align}
By Assumption~\ref{newas1}-(d) $w_G$ is $\rP$-integrable so that the measure $w_G(x,\by) \rP(dx,d\by)$ is absolutely continuous with respect to $\rP$. Since $\rP\bigl(\sX \times (\sY \setminus F) \bigr) \rightarrow 0$ as $\delta \rightarrow 0$, we obtain
\begin{align}
\int_{\sX \times (\sY \setminus F)} w_G(x,{\bf y}) \text{  }\rP(dx,d{\bf y}) \rightarrow 0 \text{  as   } \delta \rightarrow 0.\nonumber
\end{align}
Since $J(\underline{\gamma}) < J^* + \frac{\varepsilon}{2}$, there exists a sufficiently small $\delta>0$ such that $J(\underline{\tgamma}) < J^* + \varepsilon$. This completes the proof.
\end{proof}

Since for each $i$, $\sY^i$ is a locally compact separable metric space, there exists a nested sequence of compact sets $\{\sK_l^i\}$ such that $\sK_l^i \subset \intr \sK_{l+1}^i$ and $\sY^i = \bigcup_{l=1}^{\infty} \sK_l^i$ \cite[Lemma 2.76]{AlBo06}, where $\intr D$ denotes the interior of the set $D$.

Recall that $d_i$ denotes the metric on $\sY^i$. For each $l\geq1$, let $\sY_{l,n}^i \coloneqq \bigl\{ y_{i,1}, \ldots, y_{i,i_{l,n}} \bigr\} \subset \sK_l^i$ be an $1/n$-net in $\sK_l^i$. Recall that if $\sY_{l,n}^i$ is an $1/n$-net in $\sK_l^i$, then  for any $y \in \sK_l^i$ we have
\begin{align}
\min_{z \in \sY_{l,n}^i} d_i(y,z) < \frac{1}{n}. \nonumber
\end{align}
For each $l$ and $n$, let $q_{l,n}^i: \sK_l^i \rightarrow \sY_{l,n}^i$ be the nearest neighborhood quantizer; that is, for any $y \in \sK_l^i$
\begin{align}
q_{l,n}^i(y) = \argmin_{z \in \sY_{l,n}^i} d_{i}(y,z), \nonumber
\end{align}
where ties are broken so that $q_{l,n}^i$ is measurable.
If $\sK_l^i=[-M,M] \subset \sY^i = \R$ for some $M \in \R_+$, the finite set $\sY_{l,n}^i$ can be chosen such that $q_{l,n}^i$ becomes an uniform quantizer. We let $Q_{l,n}^i: \sY^i \rightarrow \sY_{l,n}^i$ denote the extension of $q_{l,n}^i$ to $\sY^i$ given by
\begin{align}
Q_{l,n}^i(y) \coloneqq \begin{cases}
q_{l,n}^i(y), &\text{ if } y \in \sK_l^i, \\
y_{i,0}, &\text{ otherwise},
\end{cases} \nonumber
\end{align}
where $y_{i,0} \in \sY^i$ is some auxiliary element. Define $\Gamma_{l,n}^i = \Gamma^i \circ Q_{l,n}^i \subset \Gamma^i$; that is, $\Gamma_{l,n}^i$ is defined to be the set of all strategies $\tgamma^i \in \Gamma^i$ of the form $\tgamma^i = \gamma^i \circ Q_{l,n}^i$, where $\gamma^i \in \Gamma^i$. Define also ${\bf \Gamma}_{l,n} \coloneqq \prod_{i=1}^N \Gamma_{l,n}^i \subset {\bf \Gamma}$.
The following theorem states that an optimal (or almost optimal) policy can be approximated with arbitrarily small approximation error for the induced costs by policies in ${\bf \Gamma}_{l,n}$ for sufficiently large $l$ and $n$.

\begin{theorem}\label{newthm1}
For any $\varepsilon>0$, there exist $(l,n(l))$ and $\underline{\gamma} \in {\bf \Gamma}_{l,n(l)}$ such that
\begin{align}
J(\underline{\gamma}) < J^* + \varepsilon. \nonumber
\end{align}
\end{theorem}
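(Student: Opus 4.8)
The plan is to follow the same strategy used for Theorem~\ref{thm1} in the compact/bounded case, but starting from the near-optimal \emph{continuous} strategy with compact convex range furnished by Proposition~\ref{newprop1}, and then controlling the two separate sources of error introduced by the quantizers $Q_{l,n}^i$: the truncation of the observation to the compact set $\sK_l^i$ (governed by $l$), and the finite resolution of the $1/n$-net inside $\sK_l^i$ (governed by $n$). Concretely, fix $\varepsilon>0$ and use Proposition~\ref{newprop1} to obtain a compact $G=\prod_{i=1}^N G^i$ with each $G^i$ convex and compact, together with a continuous $\underline{\gamma}=(\gamma^1,\ldots,\gamma^N)\in{\bf \Gamma}_{c,G}$ satisfying $J(\underline{\gamma})<J^*+\varepsilon/2$. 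For each pair $(l,n)$ set $\underline{\gamma}^{l,n}\coloneqq(\gamma^1\circ Q_{l,n}^1,\ldots,\gamma^N\circ Q_{l,n}^N)\in{\bf \Gamma}_{l,n}$. Since the range of $\gamma^i\circ Q_{l,n}^i$ is contained in the range of $\gamma^i$, which lies in $G^i$, each $\underline{\gamma}^{l,n}$ again takes values in $G$; hence both $c(x,\by,\underline{\gamma})\le w_G(x,\by)$ and $c(x,\by,\underline{\gamma}^{l,n})\le w_G(x,\by)$ pointwise, and by Assumption~\ref{newas1}-(d) the envelope $w_G$ is $\rP$-integrable. This common integrable dominating function is exactly what makes the dominated convergence argument go through in the unbounded-cost setting.

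Writing $\sK_l\coloneqq\prod_{i=1}^N\sK_l^i$, I would split
\begin{align}
\bigl|J(\underline{\gamma})-J(\underline{\gamma}^{l,n})\bigr|
&\leq \int_{\sX\times\sK_l}\bigl|c(x,\by,\underline{\gamma})-c(x,\by,\underline{\gamma}^{l,n})\bigr|\,\rP(dx,d\by) \nonumber \\
&\phantom{xxx}+2\int_{\sX\times(\sY\setminus\sK_l)}w_G(x,\by)\,\rP(dx,d\by). \nonumber
\end{align}
For the tail term, since $\sK_l^i\subset\intr\sK_{l+1}^i$ and $\sY^i=\bigcup_l\sK_l^i$ we have $\rP\bigl(\sX\times(\sY\setminus\sK_l)\bigr)\le\sum_{i=1}^N\mu^i(\sY^i\setminus\sK_l^i)\to0$ as $l\to\infty$; because $w_G\,\rP(dx,d\by)$ is a finite measure absolutely continuous with respect to $\rP$, the tail integral tends to $0$ as $l\to\infty$, crucially \emph{uniformly in $n$}. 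I would fix $l$ so that $2\int_{\sX\times(\sY\setminus\sK_l)}w_G\,d\rP<\varepsilon/4$. For the first term with this $l$ now frozen, every $\by\in\sK_l$ satisfies $d_i\bigl(Q_{l,n}^i(y^i),y^i\bigr)=d_i\bigl(q_{l,n}^i(y^i),y^i\bigr)<1/n\to0$, so $\underline{\gamma}^{l,n}(\by)\to\underline{\gamma}(\by)$ by continuity of each $\gamma^i$, and then $c(x,\by,\underline{\gamma}^{l,n})\to c(x,\by,\underline{\gamma})$ pointwise on $\sX\times\sK_l$ by continuity of $c$ in $\bu$. Dominated convergence against $2w_G$ sends the first integral to $0$ as $n\to\infty$, so I would choose $n=n(l)$ making it $<\varepsilon/4$. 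Combining the two estimates yields $\bigl|J(\underline{\gamma})-J(\underline{\gamma}^{l,n(l)})\bigr|<\varepsilon/2$, whence $J(\underline{\gamma}^{l,n(l)})<J^*+\varepsilon$ with $\underline{\gamma}^{l,n(l)}\in{\bf \Gamma}_{l,n(l)}$, as required.

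The main obstacle is the ordering of the two limits: one cannot let $n\to\infty$ and $l\to\infty$ together, because the quantization error inside $\sK_l$ is only controllable once the compact set is fixed, while the truncation error is only controllable by enlarging $\sK_l$. The delicate point is therefore to handle the tail first and \emph{uniformly in $n$}, which is precisely why it matters that the quantized strategies never leave the fixed compact set $G$ (a consequence of the auxiliary representation point $y_{i,0}$ together with the range constraint $\underline{\gamma}\in{\bf \Gamma}_{c,G}$): this provides the single $n$-independent integrable envelope $2w_G$ that dominates the tail regardless of the resolution. Only after $l$ is frozen does the genuine finite-grid quantization error get driven to zero through $n\to\infty$.
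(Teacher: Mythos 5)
Your proof is correct, and it shares the paper's overall skeleton: start from the near-optimal continuous strategy with compact convex range $G$ given by Proposition~\ref{newprop1}, compose with the quantizers $Q_{l,n}^i$, fix $l$ first to control the tail, then send $n\to\infty$ inside $\sK_l$ by dominated convergence. Where you genuinely depart from the paper is in the treatment of the tail $\sX\times(\sY\setminus\sK_l)$. The paper decomposes this set into the $2^N-1$ pieces $\sK_l^s$ indexed by the subset $s$ of agents whose observations fall outside their compact sets, uses that on $\sK_l^s$ the quantized policy coincides with $(\underline{\gamma}^{-s}\circ q_{l,n}^{-s},\underline{\gamma}_{u^{s,*}})$ with $u^{i,*}=\gamma^i(y_{i,0})$, and then runs, on each piece separately, a triangle-inequality-plus-dominated-convergence argument together with the requirement that each unquantized tail integral be below $\varepsilon/2^{N+1}$. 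You bypass this combinatorial bookkeeping entirely by observing that $\underline{\gamma}$ and $\underline{\gamma}^{l,n}$ both take values in $G$ everywhere (the auxiliary point $y_{i,0}$ is mapped by $\gamma^i$ into $G^i$), so on the whole tail both costs are dominated by the single integrable envelope $w_G$, giving the $n$-independent bound $2\int_{\sX\times(\sY\setminus\sK_l)} w_G\,d\rP\to 0$ as $l\to\infty$. The two arguments rest on exactly the same ingredients (the compact range from Proposition~\ref{newprop1}, Assumption~\ref{newas1}-(d), and dominated convergence), but your version shows the paper's finer decomposition is not needed for the estimate; what that decomposition buys is only an explicit description of how the quantized policy behaves outside $\sK_l$ (each affected agent falls back on the constant action $\gamma^i(y_{i,0})$), which is conceptually illustrative but logically superfluous.
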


\begin{proof}
By Proposition~\ref{newprop1}, there exists $\underline{\gamma} \in {\bf \Gamma}_{c,G}$ such that $J(\underline{\gamma}) < J^* + \varepsilon/4$, where $G = \prod_{i=1}^N G^i$ and each $G^i$ is convex and compact. For each $l$ and $n$, we define $\gamma^i_{l,n} \coloneqq \gamma^i \circ Q_{l,n}^i$ and $\underline{\gamma}_{l,n} = (\gamma^1_{l,n},\ldots,\gamma^N_{l,n})$. Define also $u^{i,*} \coloneqq \gamma^i(y_{i,0}) \in G^i$.

Let ${\cal N}^*$ denote the collection of all subsets of ${\cal N}$ except the empty set. For any $s \in {\cal N}^*$, we define
\begin{align}
u^{s,*} &\coloneqq \bigl( u^{i,*} \bigr)_{i\in s}, \text{  } \underline{\gamma}_{u^{s,*}} \coloneqq \bigl( \gamma^i_{u^{i,*}} \bigr)_{i\in s}, \text{   } \underline{\gamma}^{-s} \coloneqq \bigl( \gamma^i \bigr)_{i \notin s}, \nonumber \\
\intertext{ and   }
\sK_l^s &\coloneqq \prod_{i\in s} \bigl( \sK_l^i \bigr)^c \times \prod_{i \notin s} \sK_l^i. \nonumber
\end{align}
Recall that $\gamma^i_{u^{i,*}}$ is the strategy which maps any $y^i \in \sY^i$ to $u^{i,*}$.
Let $\sK_l = \prod_{i=1}^N \sK_l^i$ and observe that
\[(\sX \times \sK_l)^c = \bigcup_{s \in {\cal N}^*} \sK_l^s.\]

Note that since the range of the strategy $(\underline{\gamma}^{-s},\underline{\gamma}_{u^{s,*}})$ is contained in $G$,
we have $J(\underline{\gamma}^{-s},\underline{\gamma}_{u^{s,*}}) \leq \int_{\sX \times \sY} w_G(x,\by) \rP(dx,d\by) < \infty$ for all $s \in {\cal N}^*$ by Assumption~\ref{newas1}-(d). Hence, there exists a sufficiently large $l$ such that
\begin{align}
\biggl| J(\underline{\gamma}) - \int_{\sX\times\sK_l} c(x,\by,\underline{\gamma}) \rP(dx,d\by) \biggl| &\leq \frac{\varepsilon}{4}, \nonumber \\
\intertext{and}
\int_{\sX\times\sK_l^s} c(x,\by,\underline{\gamma}^{-s},\underline{\gamma}_{u^{s,*}}) \rP(dx,d\by) &\leq \frac{\varepsilon}{2^{N+1}}, \text{ } \text{for all } s \in {\cal N}^*. \nonumber
\end{align}

Let
$q_{l,n}^{-s} = \bigl( q_{l,n}^i \bigr)_{i\notin s}$. Then, we have
\begin{align}
&\limsup_{n\rightarrow\infty} | J(\underline{\gamma}) - J(\underline{\gamma}_{l,n})| \leq \biggl| J(\underline{\gamma}) - \int_{\sX\times\sK_l} c(x,\by,\underline{\gamma}) \text{ }d\rP \biggr| \nonumber \\
&\phantom{xx}+ \limsup_{n\rightarrow\infty} \biggl| \int_{\sX\times\sK_l} c(x,\by,\underline{\gamma}) \text{ }d\rP - \int_{\sX\times\sK_l} c(x,\by,\underline{\gamma}_{l,n}) \text{ }d\rP \biggr| \nonumber \\
&\phantom{xx}+ \sum_{s \in {\cal N}^*} \limsup_{n\rightarrow\infty} \int_{\sX\times\sK_l^s} c(x,\by,\underline{\gamma}^{-s} \circ q_{l,n}^{-s} ,\underline{\gamma}_{u^{s,*}}) \text{ }d\rP \nonumber \\
&\leq \frac{\varepsilon}{4} + \limsup_{n\rightarrow\infty} \biggl| \int_{\sX\times\sK_l} c(x,\by,\underline{\gamma}) \text{}d\rP - \int_{\sX\times\sK_l} c(x,\by,\underline{\gamma}_{l,n}) \text{}d\rP \biggr| \nonumber \\
&\phantom{xx}+ \sum_{s \in {\cal N}^*} \limsup_{n\rightarrow\infty} \biggl| \int_{\sX\times\sK_l^s} c(x,\by,\underline{\gamma}^{-s} \circ q_{l,n}^{-s},\underline{\gamma}_{u^{s,*}}) \text{ }d\rP \nonumber \\
&\phantom{xxxxxxxxxxxxxxxxxxxx}- \int_{\sX\times\sK_l^s} c(x,\by,\underline{\gamma}^{-s},\underline{\gamma}_{u^{s,*}}) \text{ }d\rP \biggr|  \nonumber \\
&\phantom{xxxxxxxxxxxxxxxxx}+ \sum_{s \in {\cal N}^*} \int_{\sX\times\sK_l^s} c(x,\by,\underline{\gamma}^{-s},\underline{\gamma}_{u^{s,*}}) \text{ }d\rP \nonumber \\
&\leq \frac{\varepsilon}{4} + \sum_{s \in {\cal N}^*} \frac{\varepsilon}{2^{N+1}} \nonumber \\
&\phantom{xx}+ \limsup_{n\rightarrow\infty} \biggl| \int_{\sX\times\sK_l} c(x,\by,\underline{\gamma}) \text{ }d\rP - \int_{\sX\times\sK_l} c(x,\by,\underline{\gamma}_{l,n}) \text{ }d\rP \biggr| \nonumber \\
&\phantom{xx}+ \sum_{s \in {\cal N}^*} \limsup_{n\rightarrow\infty} \biggl| \int_{\sX\times\sK_l^s} c(x,\by,\underline{\gamma}^{-s} \circ q_{l,n}^{-s},\underline{\gamma}_{u^{s,*}}) \text{ }d\rP \nonumber \\
&\phantom{xxxxxxxxxxxxxxxxxxxx}- \int_{\sX\times\sK_l^s} c(x,\by,\underline{\gamma}^{-s},\underline{\gamma}_{u^{s,*}}) \text{ }d\rP \biggr| . \nonumber
\end{align}
Note that in the last expression, the integrands in the third and fourth terms are less than $w_G$. Since $w_G$ is $\rP$-integrable by Assumption~\ref{newas1}-(d), $\gamma^i \circ q_{l,n}^i \rightarrow \gamma^i$ on $\sK_l^i$ as $n\rightarrow\infty$ (recall that $\gamma^i$ is continuous), and $c$ is continuous in $\bu$, the third and fourth terms in the last expression converge to zero as $n\rightarrow\infty$ by dominated convergence theorem. Hence, there exists a sufficiently large $n(l)$ such that the last expression becomes less than $3\varepsilon/4$. Therefore, $J(\underline{\gamma}_{l,n(l)}) < J^* + \varepsilon$, completing the proof.
\end{proof}

The above result implies that to compute a near optimal policy for the team problem it is sufficient to choose a strategy based on the quantized observations $\bigl( Q_{l,n}^1(y^1), \ldots, Q_{l,n}^N(y^N) \bigr)$ for sufficiently large $l$ and $n$. Furthermore, this nearly optimal strategy can have a compact range of the form $G = \prod_{i=1}^N G^i$, where $G^i$ is convex and compact for each $i=1,\ldots,N$. 
However, to obtain a result analogous to the Theorem~\ref{thm2}, we need to impose a further assumption. To this end, we first introduce a finite observation model. For each $(l,n)$, let $\sZ_{l,n}^i \coloneqq \{y_{i,0},y_{i,1},\ldots,y_{i,i_{l,n}}\}$ (i.e., the output levels of $Q_{l,n}^i$) and define the stochastic kernels $W_{l,n}^i(\,\cdot\,|x)$ on $\sZ_{l,n}^i$ given $\sX$ as follows:
\begin{align}
W_{l,n}^i(\,\cdot\,|x) &\coloneqq \sum_{j=0}^{i_{l,n}} W(\S_{i,j}^{l,n}|x) \delta_{y_{i,j}}(\,\cdot\,), \nonumber
\end{align}
where $\S_{i,j}^{l,n} \coloneqq \bigl\{ y \in \sY^i: Q_{l,n}^i(y) = y_{i,j} \bigr\}$. Let $\Pi_{n,l}^i \coloneqq \bigl\{\pi^i: \sZ_{l,n}^i \rightarrow \sU^i, \text{$\pi^i$ measurable}\bigr\}$ and  ${\bf \Pi}_{l,n} \coloneqq \prod_{i=1}^N \Pi_{l,n}^{i}$.
Define $J_{l,n}: {\bf \Pi}_{l,n} \rightarrow [0,\infty)$ as
\begin{align}
J_{l,n}(\underline{\pi}) \coloneqq \int_{\sX \times \sZ_{l,n}} c(x,{\bf y},{\bf u}) \rP_{l,n}(dx,d{\bf y}), \nonumber
\end{align}
where $\underline{\pi} = (\pi^1,\ldots,\pi^N)$, ${\bf u} = \underline{\pi}({\bf y})$, $\sZ_{l,n} = \prod_{i=1}^N \sZ_{l,n}^i$, and $\rP_{l,n}(dx,d{\bf y}) = \rP(dx) \prod_{i=1}^N W_{l,n}^i(dy^{i}|x)$. Note that the probability measure $\rP_{l,n}$ can also be treated as a
measure on $\sX \times \sY$. In this case, it is not difficult to prove that $\rP_{l,n}$ converges to $\rP$ weakly as $l,n \rightarrow\infty$. For any compact subset $G$ of $\sU$, we also define ${\bf \Pi}_{l,n}^G \coloneqq \{\underline{\pi} \in {\bf \Pi}_{l,n}: \underline{\pi}(\sZ_{l,n}) \subset G\}$.

\begin{assumption}\label{nnewas1}
For any compact subset $G$ of $\sU$ of the form $G = \prod_{i=1}^N G^i$, we assume that the function $w_G$ is uniformly integrable with respect to the measures $\{\rP_{l,n}\}$; that is,
\begin{align}
\lim_{R\rightarrow\infty} \sup_{l,n} \int_{\{w_G > R\}} w_G(x,{\bf y}) \text{ } d\rP_{l,n} = 0. \nonumber
\end{align}
\end{assumption}

\begin{lemma}\label{convergence-unbounded}
Let $\{\underline{\pi}_{l,n}\}$ be a sequence of strategies such that $\underline{\pi}_{l,n} \in {\bf \Pi}_{l,n}^G$, where $G= \prod_{i=1}^N G^i$ and each $G^i$ is convex and compact. For each $l$ and $n$, define $\underline{\gamma}_{l,n} \coloneqq \underline{\pi}_{l,n} \circ Q_{l,n}$, where $Q_{l,n} \coloneqq (Q_{l,n}^1,\ldots,Q_{l,n}^N)$. Then, we have
\begin{align}
\lim_{l,n\rightarrow\infty} |J_{l,n}(\underline{\pi}_{l,n}) - J(\underline{\gamma}_{l,n})| =  0. \nonumber
\end{align}
\end{lemma}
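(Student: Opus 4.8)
The plan is to first rewrite the finite-model cost as an integral over the original observation space. Each kernel $W_{l,n}^i(\,\cdot\,|x)$ is the pushforward of $W^i(\,\cdot\,|x)$ under the quantizer $Q_{l,n}^i$, so integrating any function against $\rP_{l,n}$ equals integrating its composition with $Q_{l,n}$ against $\rP$. Since $\underline{\gamma}_{l,n} = \underline{\pi}_{l,n}\circ Q_{l,n}$, this identity yields
\[
J_{l,n}(\underline{\pi}_{l,n}) = \int_{\sX\times\sY} c\bigl(x, Q_{l,n}(\by), \underline{\gamma}_{l,n}(\by)\bigr)\,\rP(dx,d\by),
\]
and hence
\[
|J_{l,n}(\underline{\pi}_{l,n}) - J(\underline{\gamma}_{l,n})| \le \int_{\sX\times\sY} h_{l,n}\,d\rP, \quad h_{l,n} \coloneqq \bigl|c(x,Q_{l,n}(\by),\underline{\gamma}_{l,n}(\by)) - c(x,\by,\underline{\gamma}_{l,n}(\by))\bigr|.
\]
It then remains to show $\int h_{l,n}\,d\rP \to 0$. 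Because the strategy $\underline{\gamma}_{l,n}$ itself changes with $(l,n)$, I would not appeal to dominated convergence with a fixed dominating function (as in Lemma~\ref{convergence-comp}) but instead to the Vitali convergence theorem, establishing pointwise convergence and uniform integrability of $\{h_{l,n}\}$ separately.

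For pointwise convergence, fix $(x,\by)$. Each $\sY^i = \bigcup_l \sK_l^i$ with the $\sK_l^i$ nested, so for all large $l$ we have $\by \in \sK_l$, giving $Q_{l,n}(\by) = q_{l,n}(\by)$ with $d_i(y^i,q_{l,n}^i(y^i)) < 1/n$; thus $Q_{l,n}(\by) \to \by$. By local compactness (Assumption~\ref{newas1}-(c)) I would fix a compact neighborhood $K$ of $\by$ and note that $c(x,\cdot,\cdot)$ is uniformly continuous on the compact set $K\times G$. Since $\underline{\gamma}_{l,n}(\by)\in G$ (because $\underline{\pi}_{l,n}\in{\bf \Pi}_{l,n}^G$) and the action argument is \emph{identical} in the two terms of $h_{l,n}$, uniform continuity in the $\by$-variable, uniform over $\bu\in G$, forces $h_{l,n}(x,\by)\to 0$. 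The hard part is exactly this point: because $\underline{\gamma}_{l,n}(\by)$ moves with $(l,n)$, mere separate continuity of $c$ in $\by$ for a fixed action does not suffice, and one must exploit the compactness of the action range $G$ to obtain the required equicontinuity.

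For uniform integrability, I would use the bound $h_{l,n} \le w_G(x,Q_{l,n}(\by)) + w_G(x,\by)$, which holds since $\underline{\gamma}_{l,n}(\by)\in G$ and $w_G = \sup_{\bu\in G} c(\cdot,\cdot,\bu)$. The term $w_G(x,\by)$ is $\rP$-integrable by Assumption~\ref{newas1}-(d), hence a uniformly integrable (singleton) family. For the term $w_G(x,Q_{l,n}(\by))$, the same pushforward identity gives
\[
\int_{\{w_G(\cdot,Q_{l,n}(\cdot))>R\}} w_G(x,Q_{l,n}(\by))\,d\rP = \int_{\{w_G>R\}} w_G\,d\rP_{l,n},
\]
so Assumption~\ref{nnewas1} is precisely the statement that $\{(x,\by)\mapsto w_G(x,Q_{l,n}(\by))\}_{l,n}$ is uniformly integrable with respect to $\rP$. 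Since a nonnegative family dominated by a sum of two uniformly integrable families is itself uniformly integrable, $\{h_{l,n}\}$ is uniformly integrable. Combining this with the pointwise convergence, the Vitali convergence theorem gives $\int h_{l,n}\,d\rP \to 0$ along any indices with $l,n\to\infty$, which is the assertion.
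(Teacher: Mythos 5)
Your proof is correct, but it takes a genuinely different route from the paper's. The paper's own argument first notes that $\rP_{l,n} \rightarrow \rP$ weakly and then invokes a result on convergence of integrals with varying measures (\cite[Theorem 3.5]{Ser82}) together with Assumption~\ref{nnewas1} to conclude that the weighted measures $\mu_G^{l,n}(S) = \int_S w_G \, d\rP_{l,n}$ converge weakly to $\mu_G(S) = \int_S w_G \, d\rP$; weak convergence gives tightness, hence a single compact set $K \subset \sX \times \sY$ whose complement has uniformly small $\mu_G$- and $\mu_G^{l,n}$-mass, and the error is split into an integral over $K$ (handled by bounded convergence, using continuity of $c$ and compactness) plus tail terms over $K^c$. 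You instead bypass weak convergence and tightness entirely: you use the same pushforward identity, prove pointwise convergence $h_{l,n} \rightarrow 0$, and read off uniform integrability of $\{h_{l,n}\}$ directly from Assumption~\ref{nnewas1} via the identity $\int_{\{w_G(\cdot,Q_{l,n}(\cdot))>R\}} w_G(x,Q_{l,n}(\by))\,d\rP = \int_{\{w_G>R\}} w_G\,d\rP_{l,n}$, then finish with Vitali. What each buys: the paper's argument localizes everything to one compact set and needs only bounded convergence there, at the price of citing an external weak-convergence theorem; yours is more elementary and self-contained, and it handles the tail bookkeeping more cleanly---your tail estimate carries the indicator through the quantizer (the set is $\{w_G(\cdot,Q_{l,n}(\cdot))>R\}$), whereas the paper's splitting over a fixed $K^c$ tacitly identifies $\int_{K^c} w_G(x,Q_{l,n}(\by))\,d\rP$ with $\int_{K^c} w_G\,d\rP_{l,n}$, which strictly speaking requires composing the indicator with $Q_{l,n}$ as well. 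You also make explicit the equicontinuity-on-$K\times G$ step needed for pointwise convergence (necessary because $\underline{\gamma}_{l,n}(\by)$ varies with $(l,n)$), a point the paper compresses into ``dominated convergence and continuity of $c$ in $\by$.'' Both arguments ultimately rest on the same two ingredients, the pushforward identity and Assumption~\ref{nnewas1}, so the difference is in the machinery rather than the substance.
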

\begin{proof}
Let us introduce the following finite measures on $\sX \times \sY$:
\begin{align}
\mu_G(S) &\coloneqq \int_{S} w_G(x,{\bf y}) \text{ } d\rP, \nonumber \\
\mu_G^{l,n}(S) &\coloneqq \int_{S} w_G(x,{\bf y}) \text{ } d\rP_{l,n}.  \nonumber
\end{align}
Since $\rP_{l,n}$ converges to $\rP$ weakly, by \cite[Theorem 3.5]{Ser82} and Assumption~\ref{nnewas1} we have $\mu_G^{l,n} \rightarrow \mu_G$ weakly as $l,n \rightarrow \infty$. Hence, the sequence $\{\mu_G^{l,n}\}$ is tight. Therefore, there exists a compact subset $K$ of $\sX \times \sY$ such that $\mu_G(K^c)< \varepsilon/2$ and $\mu_G^{l,n}(K^c) < \varepsilon/2$ for all $l,n$. Then, we have
\begin{align}
&|J_{l,n}(\underline{\pi}_{l,n}) - J(\underline{\gamma}_{l,n})| \nonumber \\
&\phantom{xx}= \bigg| \int_{\sX \times \sY} c(x,Q_{l,n}({\bf y}),\underline{\gamma}_{l,n}) \text{ } d\rP - \int_{\sX \times \sY} c(x,{\bf y},\underline{\gamma}_{l,n}) \text{ } d\rP \biggr|  \nonumber \\
&\phantom{xx}\leq \int_{K} \bigl| c(x,Q_{l,n}({\bf y}),\underline{\gamma}_{l,n}) - c(x,{\bf y},\underline{\gamma}_{l,n}) \bigr| \text{ } d\rP \nonumber \\
&\phantom{xxxxxxxxxxxx}+ \int_{K^c} w_G(x,{\bf y}) \text{ } d\rP + \int_{K^c} w_G(x,{\bf y}) \text{ } d\rP_{l,n} \nonumber
\end{align}
The first term in the last expression goes to zero as $l,n\rightarrow\infty$ by dominated convergence theorem and the fact that $c$ is bounded and continuous in ${\bf y}$. The second term is less than $\varepsilon$. Since $\varepsilon$ is arbitrary, this completes the proof.
\end{proof}

The following theorem is the main result of this section which is a consequence of Theorem~\ref{newthm1}. It states that to compute a near optimal strategy for the original team problem, it is sufficient to compute an optimal (or an almost optimal policy if an optimal one does not exist) policy for the team problem described above.

\begin{theorem}\label{newthm2}
Suppose Assumptions~\ref{newas1} and \ref{nnewas1} hold. Then, for any $\varepsilon>0$, there exists a pair $(l,n(l))$ and a compact subset $G = \prod_{i=1}^N G^i$ of $\sU$ such that an optimal (or almost optimal) policy $\underline{\pi}^{*}$ in the set ${\bf \Pi}_{l,n(l)}^G$ for the cost $J_{l,n(l)}$ is $\varepsilon$-optimal for the original team problem when $\underline{\pi}^{*} = (\pi^{1*},\ldots,\pi^{N*})$ is extended to $\sY$ via $\gamma^i = \pi^{i*} \circ Q_{l,n(l)}^i$.
\end{theorem}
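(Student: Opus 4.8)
The plan is to mirror the proof of Theorem~\ref{thm2}, replacing its bounded/compact ingredients by their unbounded/noncompact analogues: Theorem~\ref{newthm1} in place of Theorem~\ref{thm1}, Lemma~\ref{convergence-unbounded} in place of Lemma~\ref{convergence-comp}, and a fixed compact range $G$ that confines every competing strategy so that $w_G$ and Assumption~\ref{nnewas1} can be brought to bear. Fix $\varepsilon>0$ and, via Proposition~\ref{newprop1}, fix a compact $G=\prod_{i=1}^N G^i$ with each $G^i$ convex and compact, together with a continuous $\underline{\gamma}^{\sharp}\in{\bf \Gamma}_{c,G}$ satisfying $J(\underline{\gamma}^{\sharp})<J^*+\varepsilon/2$. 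All finite models below are formed with this $G$. The goal is to squeeze $\inf_{\underline{\pi}\in{\bf \Pi}_{l,n}^G}J_{l,n}(\underline{\pi})$ toward $J^*$ as $l,n\to\infty$ and then extend a near-minimizer.

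For the upper bound I would show $\limsup_{l,n\to\infty}\inf_{\underline{\pi}\in{\bf \Pi}_{l,n}^G}J_{l,n}(\underline{\pi})\le J^*+\varepsilon/2$. Let $\underline{\pi}^{\sharp}_{l,n}$ be the restriction of $\underline{\gamma}^{\sharp}$ to $\sZ_{l,n}$, so $\underline{\pi}^{\sharp}_{l,n}\in{\bf \Pi}_{l,n}^G$ and $\underline{\pi}^{\sharp}_{l,n}\circ Q_{l,n}=\underline{\gamma}^{\sharp}\circ Q_{l,n}$. As established within the proof of Theorem~\ref{newthm1} (run with the fixed $\underline{\gamma}^{\sharp}$ and arbitrary tolerance), $J(\underline{\gamma}^{\sharp}\circ Q_{l,n})\to J(\underline{\gamma}^{\sharp})$, while Lemma~\ref{convergence-unbounded} gives $|J_{l,n}(\underline{\pi}^{\sharp}_{l,n})-J(\underline{\gamma}^{\sharp}\circ Q_{l,n})|\to 0$; together these yield $J_{l,n}(\underline{\pi}^{\sharp}_{l,n})\to J(\underline{\gamma}^{\sharp})<J^*+\varepsilon/2$, and the bound follows since $\inf_{{\bf \Pi}_{l,n}^G}J_{l,n}\le J_{l,n}(\underline{\pi}^{\sharp}_{l,n})$.

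For the matching lower bound I would show $\liminf_{l,n\to\infty}\inf_{\underline{\pi}\in{\bf \Pi}_{l,n}^G}J_{l,n}(\underline{\pi})\ge J^*$. Choose near-minimizers $\underline{\pi}^{*}_{l,n}\in{\bf \Pi}_{l,n}^G$ within $1/(l+n)$ of the infimum and set $\underline{\gamma}^{*}_{l,n}\coloneqq\underline{\pi}^{*}_{l,n}\circ Q_{l,n}$. Because $\underline{\pi}^{*}_{l,n}$ takes values in $G$, its extension lies in ${\bf \Gamma}_G\subset{\bf \Gamma}$, so $J(\underline{\gamma}^{*}_{l,n})\ge J^*$; Lemma~\ref{convergence-unbounded} then gives $|J_{l,n}(\underline{\pi}^{*}_{l,n})-J(\underline{\gamma}^{*}_{l,n})|\to 0$, whence $\liminf_{l,n}\inf_{{\bf \Pi}_{l,n}^G}J_{l,n}=\liminf_{l,n}J(\underline{\gamma}^{*}_{l,n})\ge J^*$.

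To conclude, I would fix a pair $(l,n(l))$ large enough that $\inf_{{\bf \Pi}_{l,n(l)}^G}J_{l,n(l)}<J^*+\varepsilon/2$ and that the Lemma~\ref{convergence-unbounded} error for the (near-)optimal policy sequence is below $\varepsilon/2$. Taking $\underline{\pi}^{*}$ optimal (or sufficiently near-optimal) for $J_{l,n(l)}$ over ${\bf \Pi}_{l,n(l)}^G$ and extending by $\gamma^i=\pi^{i*}\circ Q_{l,n(l)}^i$, the extension $\underline{\gamma}^{*}=\underline{\pi}^{*}\circ Q_{l,n(l)}$ obeys $J(\underline{\gamma}^{*})\le J_{l,n(l)}(\underline{\pi}^{*})+\varepsilon/2<J^*+\varepsilon$, as required. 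The crux is the unboundedness: weak convergence $\rP_{l,n}\to\rP$ transfers to the possibly unbounded cost integrals only because of the uniform-integrability Assumption~\ref{nnewas1} together with the fixed compact range $G$ (so that $w_G$ dominates the integrands), which is exactly the content that Lemma~\ref{convergence-unbounded} packages; it is for this reason that one must optimize over ${\bf \Pi}_{l,n}^G$ rather than all of ${\bf \Pi}_{l,n}$.
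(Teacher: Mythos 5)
Your proposal is correct and follows essentially the same route as the paper's proof: an upper bound obtained by restricting a near-optimal compact-range strategy (from Proposition~\ref{newprop1}/Theorem~\ref{newthm1}) to the finite observation sets, a matching lower bound obtained by extending finite-model near-minimizers via $Q_{l,n}$ and using $J\geq J^*$, with Lemma~\ref{convergence-unbounded} bridging $J_{l,n}$ and $J$ in both directions. The only cosmetic difference is that you unpack the internals of Theorem~\ref{newthm1}'s proof (via Proposition~\ref{newprop1}) where the paper cites Lemma~\ref{newprop2} and Theorem~\ref{newthm1} directly, and you make the final extension step explicit where the paper leaves it implicit.
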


\begin{proof}
Fix any $\varepsilon>0$. By Lemma~\ref{newprop2} and Theorem~\ref{newthm1}, there exists compact subset $G$ of $\sU$ of the form $G=\prod_{i=1}^N G^i$ such that
\begin{align}
\lim_{l,n\rightarrow\infty} \inf_{\underline{\gamma} \in {\bf \Gamma}_{l,n} \cap {\bf \Gamma}_G} J(\underline{\gamma}) - J^* &<\varepsilon \nonumber.
\end{align}
For each $l,n\geq1$, let $\underline{\gamma}_{l,n} \in {\bf \Gamma}_{l,n} \cap {\bf \Gamma}_G$ be such that
$J(\underline{\gamma}_{l,n})< \inf_{\underline{\gamma} \in {\bf \Gamma}_{l,n} \cap {\bf \Gamma}_G} J(\underline{\gamma}) + 1/(n+l)$. Define $\underline{\pi}_{l,n}$ as the restriction of $\underline{\gamma}_{l,n}$ to the set $\sZ_{l,n}$.
Then, we have
\begin{align}
J^* + \varepsilon &\geq \lim_{l,n\rightarrow\infty} J(\underline{\gamma}_{l,n}) \nonumber \\
&= \lim_{l,n\rightarrow\infty} J_{l,n}(\underline{\pi}_{l,n}) \text{ } \text{(by Lemma~\ref{convergence-unbounded})} \nonumber \\
&\geq \limsup_{l,n\rightarrow\infty} \inf_{\underline{\pi} \in {\bf \Pi}_{l,n}^G} J_{l,n}(\underline{\pi}). \nonumber
\end{align}
For the reverse inequality, for each $l,n\geq1$, let $\underline{\pi}_{l,n} \in {\bf \Pi}_{l,n}^G$ be such that $J_{l,n}(\underline{\pi}_{l,n}) < \inf_{\underline{\pi} \in {\bf \Pi}_{l,n}^G} J_{l,n}(\underline{\pi}) + 1/(n+l)$. Define $\underline{\gamma}_{l,n} \coloneqq \underline{\pi}_{l,n} \circ Q_{l,n}$.
Then, we have
\begin{align}
\liminf_{l,n\rightarrow\infty} \inf_{\underline{\pi} \in {\bf \Pi}_{l,n}^G} J_{l,n}(\underline{\pi}) &= \liminf_{l,n\rightarrow\infty} J_{l,n}(\underline{\pi}_{l,n}) \nonumber \\
&= \liminf_{l,n\rightarrow\infty} J(\underline{\gamma}_{l,n}) \text{ } \text{(by Lemma~\ref{convergence-unbounded})} \nonumber \\
&\geq J^*. \nonumber
\end{align}
This completes the proof.
\end{proof}

\section{Approximation of Dynamic Team Problems}\label{sec3}
The results for the static case apply also to the dynamic case, through a static reduction.
\subsection{Static Reduction of Sequential Dynamic Teams and Witsenhausen's Equivalent Model} \label{staticred}
First we review the equivalence between sequential dynamic teams and their static reduction (this is called {\it the equivalent model} \cite{wit88}\index{Witsenhausen's Equivalent Model}). Consider a dynamic team setting according to the intrinsic model where there are $N$ decision epochs, and Agent~$i$ observes $y^i \sim \eta_i(\,\cdot\,|\omega,{\bf u}^{i-1})$, and the decisions are generated as $u^i=\gamma^i(y^i)$. The resulting cost under a given team policy $\underline{\gamma}$ is
\[J(\underline{\gamma}) = E[c(\omega,{\bf y},{\bf u})].\]
This dynamic team can be converted to a static team provided that the following absolute continuity condition holds.

\begin{assumption}\label{as2}
For every $i=1,\ldots,N$, there exists a function $f_i: \Omega\times\sU_1^{i-1}\times\sY^i \rightarrow [0,\infty)$, where $\sU_1^{i-1} \coloneqq \prod_{j=1}^{i-1} \sU^i$, and a probability measure $Q_i$ on $\sY^i$ such that for all $S \in {\cal Y}^i$ we have
\begin{align}
\eta_i(S | \omega,{\bf u}^{i-1}) = \int_{S} f_i(\omega,{\bf u}^{i-1},y^i) Q_i(dy^i). \nonumber
\end{align}
\end{assumption}

Therefore, for a fixed choice of $\underline{\gamma}$, the joint distribution of $(w,{\bf y})$ is given by
\[\rP(d\omega,d{\bf y}) = \rP(d\omega) \prod_{i=1}^N f_i(\omega,{\bf u}^{i-1},y^i) Q_i(dy^i),\]
where $\bu^{i-1} = \bigl(\gamma^1(y^1),\ldots,\gamma^{i-1}(y^{i-1})\bigr)$. The cost function $J(\underline{\gamma})$ can then be written as
\begin{align}
J(\underline{\gamma}) &= \int_{\Omega\times \sY} c(\omega,{\bf y},{\bf u}) \rP(d\omega) \prod_{i=1}^N f_i(\omega,{\bf u}^{i-1},y^i) Q_i(dy^i) \nonumber \\
&= \int_{\Omega\times \sY} \tilde{c}(\omega,{\bf y},{\bf u}) \widetilde{\rP}(d\omega,d{\bf y}), \nonumber
\end{align}
where $\tilde{c}(\omega,{\bf y},{\bf u}) \coloneqq c(\omega,{\bf y},{\bf u})\prod_{i=1}^N f_i(\omega,{\bf u}^{i-1},y^i)$ and
$\widetilde{\rP}(d\omega,d{\bf y}) \coloneqq \rP(d\omega)\prod_{i=1}^N Q_i(dy^i)$. The observations now can be regarded as independent, and by incorporating the $f_i$ terms into $c$, we can obtain an equivalent {\it static team} problem. Hence, the essential step is to appropriately adjust the probability space and the cost function.

\begin{remark}
Note that in the static reduction method, some nice properties (such as continuity and boundedness) of the cost function $c$ of the original dynamic team problem can be lost, if the $f_i$ functions in Assumption~\ref{as2} are not well-behaved. However, the observation channels between $w$ and the $y^i$ are quite well-behaved for most of the practical models (i.e, additive Gaussian channel) which admits static reduction. Therefore, much of the nice properties of the cost function are preserved for most of the practical models.
\end{remark}

\subsection{Approximation Results for Nonclassical Dynamic Teams Admitting a Static Reduction}

The next theorem is the main result of this section. It states that for a class of dynamic team problems, finite models can approximate an optimal policy with arbitrary precision. In what follows, $\tilde{\rP}_{l,n}$ denotes the distribution of state and observations in the finite model approximation of the static reduction.

\begin{theorem}\label{thm4}
Suppose Assumptions~\ref{newas1}-(a),(b),(c),(e) and \ref{as2} hold. In addition, for each $i=1,\ldots,N$, $f_i(w,{\bf u}^{i-1},y^i)$ is continuous in ${\bf u}^{i-1}$ and $y^i$,
and
\begin{align}
\sup_{\bu \in G} c(w,\by,\bu) \prod_{i=1}^N f_i(w,{\bf u}^{i-1},y^i) \text{ is } \tilde{\rP}_{l,n}-\text{uniformly integrable} \nonumber 
\end{align}
for all compact $G \subset \sU$ of the form $G = \prod_{i=1}^N G^i$. Then, the static reduction of the dynamic team model satisfies Assumptions~\ref{newas1} and \ref{nnewas1}. Therefore, Theorems~\ref{newthm1} and \ref{newthm2} hold for the dynamic team problem. In particular, Theorems~\ref{newthm1} and \ref{newthm2} hold for the dynamic team problems satisfying Assumptions~\ref{newas1}, \ref{nnewas1} and \ref{as2}, if $f_i$ is bounded and continuous in ${\bf u}^{i-1}$ and $y^i$ for each $i=1,\ldots,N$.
\end{theorem}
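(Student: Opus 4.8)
The plan is to verify that the static reduction — the static team with state $\omega$, cost $\tilde c(\omega,\by,\bu)=c(\omega,\by,\bu)\prod_{i=1}^N f_i(\omega,{\bf u}^{i-1},y^i)$, observation law $\widetilde{\rP}(d\omega,d\by)=\rP(d\omega)\prod_{i=1}^N Q_i(dy^i)$, and finite-model laws $\tilde{\rP}_{l,n}$ — satisfies the full Assumption~\ref{newas1} together with Assumption~\ref{nnewas1}. Once this is established, Theorems~\ref{newthm1} and~\ref{newthm2} apply directly to the reduced static team. Because the reduction is cost preserving, i.e. $\int\tilde c\,d\widetilde{\rP}=J(\underline\gamma)$ for every policy $\underline\gamma$, the $\varepsilon$-optimality conclusions for the reduced team transfer back verbatim to the dynamic team. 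Thus the whole statement reduces to checking the hypotheses of the static theory for the triple $(\tilde c,\widetilde{\rP},\{\tilde{\rP}_{l,n}\})$.

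First I would dispose of the inherited conditions. For Assumption~\ref{newas1}-(a), $\tilde c$ is a finite product of functions each continuous in $(\by,\bu)$ for fixed $\omega$ — the factor $c$ by the original Assumption~\ref{newas1}-(a) and each $f_i$ by the new continuity hypothesis — so $\tilde c$ is continuous in $(\by,\bu)$ and, $c$ and the $f_i$ being bounded on the relevant compacta, $\tilde c$ is bounded on compact subsets. Assumptions (b) and (c) refer only to $\sU^i$ and $\sY^i$, which the reduction leaves unchanged, so they hold as assumed. For (e) I would use cost preservation: finiteness of the reduced cost is equivalent to finiteness of the dynamic cost $J$, and replacing a coordinate by a constant action yields the same value in both models, so (e) for the reduced team is exactly the assumed (e) for the dynamic team. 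Finally, Assumption~\ref{nnewas1} for the reduced team is precisely the uniform-integrability hypothesis imposed on $\tilde w_G(\omega,\by)\coloneqq\sup_{\bu\in G}\tilde c(\omega,\by,\bu)=\sup_{\bu\in G}c(\omega,\by,\bu)\prod_{i=1}^N f_i(\omega,{\bf u}^{i-1},y^i)$.

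The one genuinely new step is Assumption~\ref{newas1}-(d): the $\widetilde{\rP}$-integrability of $\tilde w_G$ for compact product sets $G$. Here I would exploit the special structure of the reduction, in which the observations are independent of the state: $\widetilde{\rP}$ and $\tilde{\rP}_{l,n}$ share the common $\omega$-marginal $\rP$ and differ only through the observation laws $\prod_i Q_i(dy^i)$ versus $\prod_i Q_{i,l,n}(dy^i)$, where $Q_{i,l,n}$ is the pushforward of $Q_i$ under $Q_{l,n}^i$. Since $Q_{i,l,n}\to Q_i$ weakly, one has $\tilde{\rP}_{l,n}\to\widetilde{\rP}$ weakly, and the uniform-integrability hypothesis gives $\sup_{l,n}\int\tilde w_G\,d\tilde{\rP}_{l,n}<\infty$. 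For fixed $\omega$ the map $\by\mapsto\tilde w_G(\omega,\by)$ is a supremum over the compact set $G$ of the continuous function $\tilde c(\omega,\cdot,\cdot)$, hence nonnegative and lower semicontinuous in $\by$; the Portmanteau inequality then yields $\int_{\sY}\tilde w_G(\omega,\by)\prod_i Q_i(dy^i)\le\liminf_{l,n}\int_{\sY}\tilde w_G(\omega,\by)\prod_i Q_{i,l,n}(dy^i)$, and integrating $\rP(d\omega)$ with Fatou gives $\int\tilde w_G\,d\widetilde{\rP}\le\liminf_{l,n}\int\tilde w_G\,d\tilde{\rP}_{l,n}<\infty$, which is (d). I expect this to be the main obstacle, since $\tilde c$ is continuous only in $(\by,\bu)$ and not jointly in $\omega$; the argument succeeds precisely because the fixed $\omega$-marginal confines the weak-convergence reasoning to the $\by$-variable, where continuity is available, so no joint continuity in $\omega$ is ever needed.

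For the concluding ``in particular'' claim I would argue that boundedness of the $f_i$ makes the extra uniform-integrability hypothesis automatic. Indeed, $\tilde c$ is continuous whenever $c$ is, and pointwise $\tilde w_G\le\bigl(\prod_{i=1}^N\|f_i\|_\infty\bigr)w_G$ with $w_G(\omega,\by)=\sup_{\bu\in G}c(\omega,\by,\bu)$. A constant multiple of a $\{\tilde{\rP}_{l,n}\}$-uniformly integrable family is again uniformly integrable, so the uniform integrability of $w_G$ (Assumption~\ref{nnewas1} applied to the original cost) forces that of $\tilde w_G$, and likewise its $\widetilde{\rP}$-integrability; hence the hypotheses of the main part of the theorem hold and the general conclusion applies.
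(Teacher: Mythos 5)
Your overall route is the right one, and in fact it is the only route available: the paper gives no proof of Theorem~\ref{thm4} at all---it is presented as an immediate consequence of the static-reduction construction of Section~\ref{staticred} together with Theorems~\ref{newthm1} and~\ref{newthm2}---so your proposal supplies exactly the verification the paper leaves implicit. Your handling of the genuinely non-trivial point is correct: Assumption~\ref{newas1}-(d) for the reduced model, i.e.\ integrability of $\tilde{w}_G(\omega,\by)=\sup_{\bu\in G}\tilde{c}(\omega,\by,\bu)$ with respect to $\widetilde{\rP}$, is not literally among the hypotheses (which speak only of the discrete measures $\tilde{\rP}_{l,n}$), and your chain---uniform integrability gives $\sup_{l,n}\int\tilde{w}_G\,d\tilde{\rP}_{l,n}<\infty$, then lower semicontinuity of $\tilde{w}_G(\omega,\cdot)$ plus Portmanteau for the fixed-$\omega$ marginals plus Fatou in $\omega$---is sound. (It can be streamlined: since $\tilde{\rP}_{l,n}$ is the image of $\widetilde{\rP}$ under $(\omega,\by)\mapsto(\omega,Q_{l,n}(\by))$ and $Q_{l,n}(\by)\to\by$ pointwise, a single application of Fatou to $\tilde{w}_G(\omega,Q_{l,n}(\by))$ yields the same bound with no appeal to weak convergence.) The transfer of (b), (c), (e) and of Assumption~\ref{nnewas1} to the reduced team, the transfer of the conclusions back to the dynamic team via policy-by-policy cost preservation, and the ``in particular'' claim via $\tilde{w}_G\le\bigl(\prod_{i=1}^N\|f_i\|_\infty\bigr)w_G$ together with the fact that domination by a uniformly integrable family preserves uniform integrability, are all handled correctly.

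The one assertion that does not follow from the stated hypotheses is that the $f_i$ are ``bounded on the relevant compacta,'' which you invoke to conclude that $\tilde{c}$ is bounded on compact subsets of $\sX\times\sY\times\sU$ (the second half of Assumption~\ref{newas1}-(a)). The theorem assumes only that $f_i(\omega,{\bf u}^{i-1},y^i)$ is continuous in $({\bf u}^{i-1},y^i)$ for each fixed $\omega$; this gives no bound uniform over $\omega$ ranging in a compact set (for instance $f_1(\omega,y^1)=h(\omega)g(y^1)$ with $h$ measurable and integrable but unbounded on compacts is consistent with every hypothesis, yet makes $\tilde{c}$ unbounded on compacta). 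This looseness is inherited from the theorem statement itself rather than introduced by you, and it vanishes in the ``in particular'' case where the $f_i$ are globally bounded; but a complete proof of the general statement requires either (i) adding boundedness of $f_i$ on compact subsets of $\Omega\times\prod_{j<i}\sU^j\times\sY^i$ as a hypothesis, or (ii) noting that the only place the static arguments use boundedness of the cost on compacta is the dominated-convergence step in Lemma~\ref{convergence-unbounded}, where for the reduced model it can be replaced by a Vitali/Pratt-type argument: $\tilde{w}_G(\omega,Q_{l,n}(\by))\to\tilde{w}_G(\omega,\by)$ pointwise (by continuity of $\tilde{w}_G(\omega,\cdot)$, which follows from joint continuity of $\tilde{c}(\omega,\cdot,\cdot)$ and compactness of $G$), and this family is uniformly integrable with respect to $\widetilde{\rP}$ by the theorem's hypothesis read through the pushforward identity, so the relevant integrals converge without any compactness bound on $\tilde{c}$.
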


\subsection{Approximation of Partially Nested Dynamic Teams}

An important dynamic information structure is the {\it partially nested} information structure. An IS is  partially nested if whenever $u^k$ affects $y^i$ for some $k<i$, Agent~i has access to $y^k$; that is, there exists a measurable function $f_{i,k}: \sY^i \rightarrow \sY^k$ such that $f_{i,k}(y^i) = y^k$ for all $\underline{\gamma} \in {\bf \Gamma}$ and all realizations of $\omega$.
For such team problems, one talks about {\it precedence relationships} among agents: Agent~$k$ is {\it precedent} to Agent~$i$ (or Agent~$k$  {\it communicates} to  Agent~$i$), if the former agent's actions affect the  information of the latter, in which case (to be partially nested) Agent~$i$ has to have the information based on which the action-generating policy of Agent~$k$ was constructed.

Dynamic teams with such an information structure always admit a static reduction through an informational equivalence. For such partially nested (or quasi-classical) information structures, a static reduction was studied by Ho and Chu in the context of LQG systems \cite{HoChu} and for a class of non-linear systems satisfying restrictive invertibility properties \cite{ho1973equivalence}.

For such dynamic teams, the cost function does not change as a result of the static reduction, unlike in the static reduction in Section~\ref{staticred}. Therefore, if the partially nested dynamic team satisfies Assumptions~\ref{newas1} and \ref{nnewas1}, then its static reduction also satisfies it. Hence, Theorems~\ref{newthm1} and \ref{newthm2} hold for such problems.

Before proceeding to the next section, we prove an auxiliary result which will be used in the next two sections.

\begin{lemma}\label{uniform}
Let $f$ and $g$ be non-negative real functions defined on metric spaces $\sE_1$ and $\sE_2$, respectively. Suppose
\begin{align}
\lim_{R\rightarrow\infty} \sup_{n\geq1} \int_{\{f > R\}} f \text{ } d\mu_n &= 0 \nonumber \\
\lim_{R\rightarrow\infty} \sup_{n\geq1} \int_{\{g > R\}} g \text{ } d\nu_n &= 0 \nonumber
\end{align}
for some sequence of probability measures $\{\mu_n\}$ and $\{\nu_n\}$. Then, we have
\begin{align}
\lim_{R\rightarrow\infty} \sup_n \int_{\{f g > R\}} f(e_1) g(e_2) \text{ } d\mu_n\otimes d\nu_n &= 0. \nonumber
\end{align}
\end{lemma}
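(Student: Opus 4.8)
The plan is to bound the tail integral of the product $fg$ by splitting the domain according to whether each factor is large or moderate. Fix $R>0$ and a threshold $T>0$ to be chosen. The key observation is that if $f(e_1)g(e_2) > R$, then at least one of the factors must be comparatively large: either $f(e_1) > \sqrt{R}$ or $g(e_2) > \sqrt{R}$, since otherwise the product would be at most $R$. Thus the event $\{fg > R\}$ is contained in $\{f > \sqrt{R}\} \cup \{g > \sqrt{R}\}$, and I would decompose the integral accordingly.

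First I would write
\begin{align}
\int_{\{fg>R\}} f(e_1)g(e_2)\,d\mu_n\otimes d\nu_n
&\leq \int_{\{f > \sqrt{R}\}} f(e_1)g(e_2)\,d\mu_n\otimes d\nu_n \nonumber \\
&\phantom{xxx}+ \int_{\{g > \sqrt{R}\}} f(e_1)g(e_2)\,d\mu_n\otimes d\nu_n. \nonumber
\end{align}
By Tonelli's theorem (the integrands are non-negative), the first term factors as
$\bigl(\int_{\{f>\sqrt{R}\}} f\,d\mu_n\bigr)\bigl(\int g\,d\nu_n\bigr)$,
and symmetrically for the second. The hypotheses give that $\int_{\{f>\sqrt{R}\}} f\,d\mu_n \to 0$ uniformly in $n$ as $R\to\infty$, and likewise for $g$; what remains is to control the complementary full integrals $\int g\,d\nu_n$ and $\int f\,d\mu_n$.

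The main obstacle is precisely bounding these full integrals $\int f\,d\mu_n$ and $\int g\,d\nu_n$ uniformly in $n$. This does not follow trivially, but uniform integrability supplies it: for the fixed choice $R=1$ (say), there is $R_0$ with $\sup_n \int_{\{f>R_0\}} f\,d\mu_n < 1$, whence $\int f\,d\mu_n \leq R_0 + \int_{\{f>R_0\}} f\,d\mu_n \leq R_0 + 1 =: C_f$ for all $n$, since on $\{f \leq R_0\}$ the integral is at most $R_0$. Thus $\sup_n \int f\,d\mu_n \leq C_f < \infty$ and similarly $\sup_n \int g\,d\nu_n \leq C_g < \infty$. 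Combining, the tail integral is bounded by
\begin{align}
C_g \sup_n\int_{\{f>\sqrt{R}\}} f\,d\mu_n + C_f \sup_n\int_{\{g>\sqrt{R}\}} g\,d\nu_n, \nonumber
\end{align}
and taking $\sup_n$ then letting $R\to\infty$ drives both terms to zero by the hypotheses, which completes the proof.
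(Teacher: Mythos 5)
Your proof is correct and follows essentially the same route as the paper's: the containment $\{fg>R\}\subset\{f>\sqrt{R}\}\cup\{g>\sqrt{R}\}$, the factorization of the product integral, and the uniform bound on $\sup_n\int f\,d\mu_n$ and $\sup_n\int g\,d\nu_n$ (a step the paper declares ``easy to prove'' and which you spell out correctly via uniform integrability). No gaps; the argument matches the paper's proof in substance.
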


\begin{proof}
Let $E_n[f] \coloneqq \int f d\mu_n$ and $\hat{E}_n[g] \coloneqq \int g d\nu_n$. It is easy to prove that $\sup_n E_n[f] \eqqcolon a < \infty$ and $\sup_n \hat{E}_n[g] \eqqcolon b < \infty$.
Note that $\{f g > R\} \subset \{f > \sqrt{R}\} \cup \{g > \sqrt{R}\}$. Hence,
\begin{align}
&\int_{\{f g > R\}} f(e_1) g(e_2) \text{ } d\mu_n\otimes d\nu_n \nonumber \\
&\phantom{xxxx}\leq \int_{\{f > \sqrt{R}\}} f g \text{ } d\mu_n\otimes d\nu_n + \int_{\{g > \sqrt{R}\}} f g \text{ } d\mu_n\otimes d\nu_n \nonumber \\
&\phantom{xxxx}= \hat{E}_n[g] \int_{\{f > \sqrt{R}\}} f \text{ } d\mu_n + E_n[f] \int_{\{g > \sqrt{R}\}} g \text{ } d\nu_n \nonumber \\
&\phantom{xxxx}\leq b \int_{\{f > \sqrt{R}\}} f \text{ } d\mu_n + a \int_{\{g > \sqrt{R}\}} g \text{ } d\nu_n. \nonumber
\end{align}
Since the last term converges to zero as $R\rightarrow\infty$ by assumption, this completes the proof.
\end{proof}


\section{Approximation of Witsenhausen's Counterexample and Asymptotic Optimality of Quantized Policies}\label{sec4}

\subsection{Witsenhausen's Counterexample and its Static Reduction}
In Witsenhausen's celebrated counterexample \cite{wit68} (see Fig.~\ref{fig1}), thare are two decision makers: Agent~$1$ observes a zero mean and unit variance Gaussian random variable $y^1$ and decides its strategy $u^1$. Agent~$2$ observes $y^2 \coloneqq u^1 + v$, where $v$ is a standard (zero mean and unit variance) Gaussian noise independent of $y^1$, and decides its strategy $u^2$.

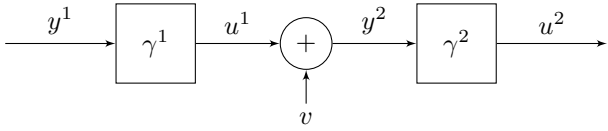
\begin{figure}[h]
\centering
\tikzstyle{int}=[draw, fill=white!20, minimum size=3em]
\tikzstyle{init} = [pin edge={to-,thin,black}]
\tikzstyle{sum} = [draw, circle]
\begin{tikzpicture}[node distance=2cm,auto,>=latex']
    \node [int] (a) {$\gamma^1$};
    \node (b) [left of=a,node distance=2cm, coordinate] {a};
    \node [sum] (d) [right of=a] {$+$};
    \node [int] (c) [right of=d] {$\gamma^2$};
    \node [coordinate] (end) [right of=c, node distance=2cm]{};
    \node (e) [below of=d, node distance=1cm] {$v$};
    \path[->] (e) edge node {} (d);
    \path[->] (b) edge node {$y^1$} (a);
    \path[->] (a) edge node {$u^1$} (d);
    \path[->] (d) edge node {$y^2$} (c);
    \path[->] (c) edge node {$u^2$} (end) ;
\end{tikzpicture}
\caption{Witsenhausen's counterexample.}
\label{fig1}
\end{figure}

The cost function of the team is given by
\begin{align}
c(y^1,u^1,u^2) = l (u^1 - y^1)^2 + (u^2 - u^1)^2, \nonumber
\end{align}
where $l \in \R_{+}$. Here, the state of the nature $\omega$ can be regarded as a degenerate random variable. We let $g(y) \coloneqq \frac{1}{\sqrt{2\pi}}\exp{\{-y^2/2\}}$. Then we have
\begin{align}
\rP(y^2 \in S|u^1) = \int_{S} g(y^2-u^1) m(dy^2), \nonumber
\end{align}
where $m$ denotes the Lebesgue measure. Let
\begin{align}
f(u^1,y^2) \coloneqq \exp{\bigl\{-\frac{(u^1)^2-2y^2u^1}{2}\bigr\}} \label{eq20}
\end{align}
so that $g(y^2-u^1) =f(u^1,y^2)\frac{1}{\sqrt{2\pi}}\exp{\{-(y^2)^2/2\}}$. The static reduction proceeds as follows: for any policy $\underline{\gamma}$, we have
\begin{align}
J(\underline{\gamma}) &= \int c(y^1,u^1,u^2) \rP(dy^2|u^1) \delta_{\gamma^1(y^1)}(du^1) \rP_{\sg}(dy^1) \nonumber \\
&= \int c(y^1,u^1,u^2) f(u^1,y^2) \rP_{\sg}(dy^2) \rP_{\sg}(dy^1), \nonumber
\end{align}
where $\rP_{\sg}$ denotes the standard Gaussian distribution. Hence, by defining $\tilde{c}(y^1,y^2,u^1,u^2) = c(y^1,u^1,u^2) f(u^1,y^2)$ and $\widetilde{\rP}(dy^1,dy^2) = \rP_{\sg}(dy^1) \rP_{\sg}(dy^2)$, we can write $J(\underline{\gamma})$ as
\begin{align}
J(\underline{\gamma}) = \int \tilde{c}(y^1,y^2,u^1,u^2) \widetilde{\rP}(dy^1,dy^2). \label{eq4}
\end{align}
hence, in the static reduction of Witsenhausen's counterexample, agents observe independent zero mean and unit variance Gaussian random variables.

\subsection{Approximation of Witsenhausen's Counterexample}

In this section we study the approximation problem for Witsenhausen's counterexample by using the static reduction formulation.
We show that the conditions in Theorem~\ref{thm4} hold for Witsenhausen's counterexample, and therefore, Theorems~\ref{newthm1} and \ref{newthm2} can be applied.

The cost function of the static reduction is given by
\begin{align}
\tilde{c}(y^1,y^2,u^1,u^2) = \bigl(l(u^1 - y^1)^2 + (u^2 - u^1)^2\bigr) f(u^1,y^2), \nonumber
\end{align}
where $f(u^1,y^2)$ is given in (\ref{eq20}). Note that the strategy spaces of the original problem and its static reduction are identical, and same strategies induce the same team costs. Recall that $\rP_{\sg}$ denotes the standard Gaussian distribution.

\begin{lemma}\label{newlemma1}
For any $(\gamma^1,\gamma^2) \in \Gamma^1 \times \Gamma^2$ with $J(\gamma^1,\gamma^2) < \infty$, we have $E \bigl[ \gamma^i(y^i)^2 \bigr] < \infty$.
\end{lemma}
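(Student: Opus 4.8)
The plan is to show that a finite cost $J(\gamma^1,\gamma^2)<\infty$ forces each $\gamma^i(y^i)$ to be square-integrable against the standard Gaussian $\rP_{\sg}$. The key observation is that the cost $\tilde{c}$ contains the term $l(u^1-y^1)^2 f(u^1,y^2)$, and since $f$ is strictly positive everywhere, finiteness of the integral controls $\gamma^1$; the cross term $(u^2-u^1)^2 f(u^1,y^2)$ will then control $\gamma^2$ once $\gamma^1$ is handled. First I would write
\begin{align}
J(\gamma^1,\gamma^2) = \int \bigl(l(u^1-y^1)^2 + (u^2-u^1)^2\bigr) f(u^1,y^2) \, \rP_{\sg}(dy^1)\rP_{\sg}(dy^2), \nonumber
\end{align}
with $u^1=\gamma^1(y^1)$ and $u^2=\gamma^2(y^2)$, and note that both summands are nonnegative, so each integrates to something finite.

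The main step for $i=1$ is to integrate out $y^2$. Because $f(u^1,y^2)=\exp\{-((u^1)^2-2y^2u^1)/2\}$, integrating $f(u^1,y^2)$ against $\rP_{\sg}(dy^2)$ gives a clean Gaussian moment-generating-function computation: $\int f(u^1,y^2)\,\rP_{\sg}(dy^2) = \exp\{(u^1)^2/2\}\cdot\exp\{-(u^1)^2/2\} = 1$ after completing the square (the factor $\exp\{y^2 u^1\}$ has Gaussian expectation $\exp\{(u^1)^2/2\}$, which cancels the $\exp\{-(u^1)^2/2\}$ prefactor). Thus integrating the first term over $y^2$ yields exactly $\int l(u^1-y^1)^2 \,\rP_{\sg}(dy^1) = l\,E[(\gamma^1(y^1)-y^1)^2]$, which must be finite. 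Since $E[(y^1)^2]=1<\infty$, the triangle inequality in $L^2(\rP_{\sg})$ gives $E[\gamma^1(y^1)^2]<\infty$.

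For $i=2$, the plan is to use finiteness of $\int (u^2-u^1)^2 f(u^1,y^2)\,d\widetilde{\rP}$ together with the bound just obtained on $\gamma^1$. The obstacle here is that $f(u^1,y^2)$ can be small, so one cannot simply drop it; instead I would first establish a lower bound on $\int f(u^1,y^2)\,\rP_{\sg}(dy^1)$ as a function of $y^2$, or restrict attention to a set of $(y^1,y^2)$ where $f$ is bounded below. A cleaner route is to integrate the second term over $y^1$ first: for fixed $y^2$, the inner integral $\int (u^2-\gamma^1(y^1))^2 f(\gamma^1(y^1),y^2)\,\rP_{\sg}(dy^1)$ is finite for $\rP_{\sg}$-a.e. $y^2$ by Fubini, and since $f>0$ this forces $(u^2)^2=\gamma^2(y^2)^2$ to be finite a.e.; the remaining work is to promote this to integrability. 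The expected main obstacle is exactly this control of $\gamma^2$: unlike the first term, the weight $f$ does not integrate away to a constant in the $y^1$ variable in a way that isolates $(u^2-u^1)^2$, so I anticipate needing either the already-established $L^2$ bound on $\gamma^1$ to expand $(u^2-u^1)^2 \geq \tfrac{1}{2}(u^2)^2 - (u^1)^2$ and then lower-bound the $y^1$-average of $f$ by a positive constant on a fixed-measure set of $y^1$ (e.g. where $|\gamma^1(y^1)|$ is bounded, which has positive probability), thereby reducing the square-integrability of $\gamma^2$ to the finiteness of the cost.
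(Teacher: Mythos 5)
Your treatment of $i=1$ is correct: integrating $f(u^1,\cdot)$ out against $\rP_{\sg}(dy^2)$ gives $1$, recovering $l\,E[(u^1-y^1)^2]<\infty$, and Minkowski's inequality finishes. This is essentially the paper's argument; the paper just avoids the moment-generating-function computation by working in the original (unreduced) formulation from the start, where that term already reads $E[(u^1-y^1)^2]$.

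For $i=2$ there is a genuine gap, and it is not repairable in the form you set up, because the statement you are trying to prove is not the one the lemma asserts --- and is in fact false. The lemma's expectation is under the original, policy-induced measure, in which $y^2=u^1+v$; it is \emph{not} the claim that $\gamma^2$ is square-integrable against the standard Gaussian $\rP_{\sg}$. For a counterexample to the latter, take $\gamma^1\equiv 1$ and $\gamma^2(y)=e^{y^2/4}(1+|y|)^{-1/2}$ for $y<0$, $\gamma^2(y)=0$ otherwise. Then $y^2\sim N(1,1)$ and
\begin{align}
E\bigl[\gamma^2(y^2)^2\bigr]=\frac{e^{-1/2}}{\sqrt{2\pi}}\int_{-\infty}^0 \frac{e^{y}}{1+|y|}\,dy<\infty, \nonumber
\end{align}
so $J(\gamma^1,\gamma^2)<\infty$, while $\int \gamma^2(y)^2\,\rP_{\sg}(dy)=\frac{1}{\sqrt{2\pi}}\int_{-\infty}^0 (1+|y|)^{-1}\,dy=\infty$. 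This is also exactly why your proposed fix must fail: for any fixed $u^1\neq 0$, $f(u^1,y^2)=\exp\{y^2u^1-(u^1)^2/2\}$ tends to $0$ exponentially as $y^2\to-\infty$ (or $+\infty$), so the $y^1$-average of $f$ over a set where $|\gamma^1|$ is merely bounded (but not identically zero) admits no positive lower bound uniform in $y^2$; and your Fubini step only yields a.e.\ finiteness of $\gamma^2(y^2)$, which is automatic and carries no integrability information, as you note. The correct route --- the paper's --- is to abandon the static reduction for this lemma: in the original formulation, finiteness of the cost directly gives $E[(u^1-y^1)^2]<\infty$ and $E[(u^2-u^1)^2]<\infty$ under the very measure in which the claim is stated, and two applications of Minkowski's inequality, $\|u^1\|_2\le\|u^1-y^1\|_2+\|y^1\|_2$ and $\|u^2\|_2\le\|u^2-u^1\|_2+\|u^1\|_2$, complete the proof in two lines.
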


\begin{proof}
To prove the lemma, we use the original problem setup instead of its static reduction. Fix any strategy $(\gamma^1,\gamma^2)$ with finite cost. Since $E\bigl[ (u^1 - y^1)^2 \big]<\infty$ and $u^1 =(u^1-y^1) + y^1$, we have $E\bigl[ (u^1)^2 \big]<\infty$ via $E\bigl[ (u^1)^2 \big]^{1/2} \leq E\bigl[ (u^1-y^1)^2 \big]^{1/2}+E\bigl[ (y^1)^2 \big]^{1/2}$. Then, we also have $E\bigl[ (u^2)^2 \big]<\infty$ since $E\bigl[ (u^1-u^2)^2 \big]<\infty$ and $E\bigl[ (u^2)^2 \big]^{1/2} \leq E\bigl[ (u^1)^2 \big]^{1/2} + E\bigl[ (u^1-u^2)^2 \big]^{1/2}$, completing the proof.
\end{proof}

For any $l \in \R_+$, we define $L \coloneqq [-l,l]$. Let $q_{l,n}$ denote the uniform quantizer on $L$ having $n$ output levels; that is,
\begin{align}
q_{l,n}&:L \rightarrow \{y_1,\ldots,y_{n}\} \subset L \nonumber \\
\intertext{and}
q^{-1}(y_j) &= \biggl[ y_j - \frac{\tau}{2}, y_j + \frac{\tau}{2} \biggr), \nonumber
\end{align}
where $\tau = \frac{2l}{n}$. Let us extend $q_{l,n}$ to $\R$ by mapping $\R \setminus L$ to $y_0=0$. For each $(l,n)$, let $\sZ_{l,n} \coloneqq \{y_0,y_{1},\ldots,y_{n}\}$ (i.e., output levels of the extended $q_{l,n}$) and define the probability measure $\rP_{l,n}$ on $\sZ_{l,n}$ as
\begin{align}
\rP_{l,n}(y_i) &= \rP_{\sg}(q_{l,n}^{-1}(y_i)). \nonumber
\end{align}
Moreover, let $\Pi^i_{l,n} \coloneqq \{\pi^i: \sZ_{l,n} \rightarrow \sU^i, \text{ $\pi^i$ measurable}\}$ and define
\begin{align}
J_{l,n}(\pi^1,\pi^2) \coloneqq \sum_{j,i=0}^{n} \tilde{c}(y_i,y_j,\pi^1(y_i),\pi^2(y_j)) \rP_{l,n}(y_i) \rP_{l,n}(y_j). \nonumber
\end{align}

With the help of Lemma~\ref{newlemma1}, we now prove the following result.

\begin{proposition}\label{newprop3}
Witsenhausen's counterexample satisfies conditions in Theorem~\ref{thm4}.
\end{proposition}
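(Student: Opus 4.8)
The plan is to verify, one at a time, the hypotheses of Theorem~\ref{thm4} for the static reduction computed above: Assumptions~\ref{newas1}-(a),(b),(c),(e), Assumption~\ref{as2}, continuity of the channel factors, and uniform integrability of $\sup_{\bu \in G}\tilde{c}$. Several are immediate from the explicit form of the model. The original cost $c(y^1,u^1,u^2)=l(u^1-y^1)^2+(u^2-u^1)^2$ is a polynomial, hence continuous and bounded on compact sets, giving \ref{newas1}-(a); since $\sU^1=\sU^2=\R$ and $\sY^1=\sY^2=\R$, parts (b) and (c) hold trivially. For Assumption~\ref{as2} and the continuity requirement, the reduction already exhibits $f_1\equiv 1$ and $f_2=f(u^1,y^2)$ from (\ref{eq20}) with $Q_1=Q_2=\rP_{\sg}$, and $f$ is a composition of exponentials, hence jointly continuous in $(u^1,y^2)$.

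The first genuinely non-trivial step is Assumption~\ref{newas1}-(e). Fix $(\gamma^1,\gamma^2)$ with $J(\gamma^1,\gamma^2)<\infty$. For $i=2$, replacing $\gamma^2$ by any constant $u^{2,*}$ keeps the cost finite, since the first term is unchanged and $E[(u^{2,*}-\gamma^1(y^1))^2]\le 2(u^{2,*})^2+2E[\gamma^1(y^1)^2]<\infty$ by Lemma~\ref{newlemma1}. For $i=1$ the difficulty is that fixing $u^1=u^{1,*}$ changes the law of $y^2=u^{1,*}+v$ from that of $\gamma^1(y^1)+v$ (with density $h$) to the Gaussian density $\phi(\,\cdot\,-u^{1,*})$, so finiteness reduces to $\int \gamma^2(z)^2\phi(z-u^{1,*})\,dz<\infty$. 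I would take $u^{1,*}=m$, a median of the law $\rho$ of $\gamma^1(y^1)$, and use $h(z)=\tfrac{1}{\sqrt{2\pi}}e^{-z^2/2}M(z)$ with $M(z)=\int e^{zt-t^2/2}\rho(dt)$. Splitting the integral defining $M$ at the median gives $M(z)\ge C_+e^{zm}$ for $z\ge 0$ and $M(z)\ge C_-e^{zm}$ for $z\le 0$, with $C_\pm>0$ because $\rho$ places mass at least $1/2$ on each side of $m$. Hence $\phi(z-m)/h(z)=e^{zm-m^2/2}/M(z)$ is bounded on $\R$, so $\int\gamma^2(z)^2\phi(z-m)\,dz$ is at most a constant times $E[\gamma^2(y^2)^2]<\infty$, once more by Lemma~\ref{newlemma1}.

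The main obstacle is the uniform integrability of $w_G(y^1,y^2)=\sup_{\bu\in G}\tilde{c}(y^1,y^2,\bu)$ against $\{\tilde{\rP}_{l,n}\}$ for compact $G=G^1\times G^2$. The plan is to first produce a product majorant: writing $G^1\subset[-b_1,b_1]$, $G^2\subset[-b_2,b_2]$, bounding the supremum termwise and using $f(u^1,y^2)=e^{-(u^1)^2/2+y^2u^1}\le e^{b_1|y^2|}$, one obtains $w_G(y^1,y^2)\le P(y^1)Q(y^2)$ with $P(y^1)=l(|y^1|+b_1)^2+(b_1+b_2)^2$ quadratic and $Q(y^2)=e^{b_1|y^2|}$. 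Since $\tilde{\rP}_{l,n}$ is the product of its two identical quantized-Gaussian marginals, Lemma~\ref{uniform} reduces the claim to showing $P$ and $Q$ are separately uniformly integrable against these marginals; and because $w_G\le PQ$ pointwise with $\{w_G>R\}\subset\{PQ>R\}$, uniform integrability of $PQ$ transfers to $w_G$. The crux is then the uniform (over $l,n$) control of the polynomial and exponential moments $\int q_{l,n}(y)^4\,\rP_{\sg}(dy)$ and $\int e^{s|q_{l,n}(y)|}\,\rP_{\sg}(dy)$; I expect this to be the delicate point, as it requires the quantization error to remain bounded and the tail region to be mapped to the fixed level $y_0=0$, so that these moments stay finite uniformly, using that the standard Gaussian has all polynomial and exponential moments. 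Once these uniform moment bounds are secured, a standard $L^{1+\delta}$ (de la Vall\'ee-Poussin) argument yields the required uniform integrability, completing the verification of the hypotheses of Theorem~\ref{thm4}.
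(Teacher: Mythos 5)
Your proposal is correct. On the main technical burden---the uniform integrability of $\sup_{\bu\in G}\tilde{c}$ with respect to $\{\tilde{\rP}_{l,n}\}$---you follow essentially the paper's own route: the same product majorant (your $P(y^1)Q(y^2)$ is, up to constants, the paper's $w_1(y^1)w_2(y^2)$), the same reduction to one-dimensional uniform integrability via Lemma~\ref{uniform}, and the same two key facts about the quantizer (the tail region $\R\setminus L$ is mapped to the fixed level $y_0=0$, and the in-range quantization error is bounded), so that moments of $q_{l,n}(y)$ are dominated by Gaussian moments of $|y|+1$; the paper then finishes by direct domination as in (\ref{nneq5})--(\ref{nneq6}) rather than your $L^{1+\delta}$/de la Vall\'ee-Poussin step, which is a cosmetic difference. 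Where you genuinely diverge is the verification of Assumption~\ref{newas1}-(e) for $i=1$. The paper argues probabilistically: since $E_{\gamma^1,\gamma^2}[(u^1-u^2)^2]<\infty$, the law of total expectation yields a point $u^{1,*}$ with $E_{\gamma^1,\gamma^2}[(u^1-u^2)^2\mid u^1=u^{1,*}]<\infty$, and since $v$ is independent of $y^1$, the constant policy $u^{1,*}$ induces exactly this conditional cost. Your argument is analytic: take $u^{1,*}$ to be a median $m$ of the law $\rho$ of $\gamma^1(y^1)$ and bound the likelihood ratio of the two possible densities of $y^2$; your lower bounds $M(z)\ge C_{\pm}e^{zm}$ (splitting the integral at $m$, with $C_{\pm}>0$ because a strictly positive integrand over a set of $\rho$-mass at least $1/2$ has positive integral) are sound, so finiteness follows from Lemma~\ref{newlemma1} exactly as you say. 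The trade-off: the paper's conditioning device is shorter and carries over verbatim to the $N$-agent Gaussian relay channel (Proposition~\ref{newprop4} reuses it), while your median argument is constructive---it exhibits an explicit admissible $u^{1,*}$---but leans on the closed-form Gaussian channel density and would need reworking when no such density formula is available. One caveat you share with the paper: the pointwise bound $|q_{l,n}(y)|\le|y|+1$ used to dominate quantized moments requires the cell width $2l/n$ to stay bounded by $2$, so the supremum over $(l,n)$ in Assumption~\ref{nnewas1} should be read over pairs with $n\ge l$; this is implicit in the paper's proof as well, so it is not a gap attributable to your argument.
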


\begin{proof}
Assumption~\ref{newas1}-(a),(b),(c) and Assumption~\ref{as2} clearly hold. To prove Assumption~\ref{newas1}-(e), we introduce the following notation. For any strategy $(\gamma^1,\gamma^2)$, we let $E_{\gamma^1,\gamma^2}$ denote the corresponding expectation operation. Pick $(\gamma^1,\gamma^2)$ with $J(\gamma^1,\gamma^2) < \infty$. Since \begin{align}
E_{\gamma^1,\gamma^2} \biggl[ E_{\gamma^1,\gamma^2} \biggl[ \bigl( u^1 - u^2 \bigr)^2 \biggl| u^1 \biggr] \biggr] = E_{\gamma^1,\gamma^2} \biggl[ \bigl( u^1 - u^2 \bigr)^2 \biggr] \nonumber
\end{align}
by the law of total expectation, there exists $u^{1,*} \in \sU^1$ such that $E_{\gamma^1,\gamma^2} \bigl[ \bigl( u^1 - u^2 \bigr)^2 \bigl| u^1 = u^{1,*} \bigr] < \infty$. Let $u^{2,*} = 0$. Then we have
\begin{align}
&J(\gamma^1,\gamma_{u^{2,*}}^2) = E_{\gamma^1,\gamma_{u^{2,*}}^2}\biggl[ l \bigl(u^1 - y^1\bigr)^2 + \bigl(u^1\bigr)^2 \biggr] \nonumber \\
&\phantom{xxxxx}=E_{\gamma^1,\gamma^2}\biggl[ l \bigl(u^1 - y^1\bigr)^2 + \bigl(u^1\bigr)^2 \biggr] < \infty \text{  (by Lemma~\ref{newlemma1})} \nonumber \\
\intertext{and}
&J(\gamma_{u^{1,*}}^1,\gamma^2) = E_{\gamma_{u^{1,*}}^1,\gamma^2}\biggl[ l \bigl(u^{1} - y^1\bigr)^2 + \bigl(u^{1} - u^2\bigr)^2 \biggr] \nonumber \\
&\phantom{xx}=E_{\gamma^1,\gamma^2}\biggl[ l \bigl(u^{1,*} - y^1\bigr)^2 \biggr] + E_{\gamma^1,\gamma^2}\biggl[\bigl(u^{1} - u^2\bigr)^2 \biggl| u^1 = u^{1,*} \biggr] \nonumber \\
&\phantom{xx}<\infty. \nonumber
\end{align}
Therefore, Assumption~\ref{newas1}-(e) holds.

Note that for the $\rP_{l,n} \otimes \rP_{l,n}$-uniform integrability condition, it is sufficient to consider compact sets of the form $[-M,M]^2$ for some $M \in \R_+$, since any compact set in $\R^2$ is contained in $[-M,M]^2$ for sufficiently large $M \in \R_+$. Let $M \in \R_+$. We have
\begin{align}
w_1(y^1) &\coloneqq \sup_{(u^1,u^2) \in [-M,M]^2} l(u^1-y^1)^2 + (u^2-u^1)^2 \nonumber \\
&= l \bigl( M + |y^1|)^2 + 4 M^2 \nonumber
\end{align}
and
\begin{align}
\sup_{(u^1,u^2) \in [-M,M]^2} f(u^1,y^2) &\leq \sup_{(u^1,u^2) \in [-M,M]^2} \exp{\{y^2 u^1\}} \nonumber \\
&= \exp{\{M |y^2|\}} \eqqcolon w_2(y^2). \nonumber
\end{align}
For functions $w_1$ and $w_2$, we have
\begin{align}
&\lim_{R \rightarrow \infty} \sup_{n,l} \int_{\{w_1(y^1) > R\}} w_1(y^1) \text{ } d\rP_{l,n} \nonumber \\
&\phantom{xxxx}= \lim_{R \rightarrow \infty} \sup_{n,l} \int_{\{w_1(y^1) > R\}} w_1(q_{l,n}(y^1)) \text{ } d\rP_{\sg} \nonumber \\
&\phantom{xxxx}\leq \lim_{R \rightarrow \infty} \int_{\{w_1(y^1) > R\}} \biggl[ l\bigl(M+(|y^1|+1)^2\bigr) + 4 M^2 \biggr] \text{ } d\rP_{\sg}=0 \label{nneq5} \\
\intertext{and}
&\lim_{R \rightarrow \infty} \sup_{n,l} \int_{\{w_2(y^2) > R\}} w_2(y^2) \text{ } d\rP_{l,n} \nonumber \\
&\phantom{xxxx}= \lim_{R \rightarrow \infty} \sup_{n,l} \int_{\{w_2(y^2) > R\}} w_2(q_{l,n}(y^2)) \text{ } d\rP_{\sg} \nonumber \\
&\phantom{xxxx}\leq \lim_{R \rightarrow \infty} \int_{\{w_2(y^2) > R\}} \exp{M(|y^2|+1)} \text{ } d\rP_{\sg}=0, \label{nneq6}
\end{align}
where (\ref{nneq5}) and (\ref{nneq6}) follow from the facts that $q_{l,n}(\R \setminus L) =0$ and the integrability of $w_1$ and $w_2$ with respect to the $\rP_{\sg}$. By Lemma~\ref{uniform}, the product $w_1 w_2$ is $\rP_{l,n} \otimes \rP_{l,n}$-uniformly integrable. Therefore, $\sup_{(u^1,u^2) \in [-M,M]^2} \tilde{c}(y^1,y^2,u^1,u^2)$ is also $\rP_{l,n} \otimes \rP_{l,n}$-uniformly integrable. Since $M$ is arbitrary, this completes the proof.
\end{proof}

Proposition~\ref{newprop3} and Theorem~\ref{thm4} imply that Theorems~\ref{newthm1} and \ref{newthm2} is applicable to Witsenhausen's counterexample. Therefore, an optimal strategy for Witsenhausen's counterexample can be approximated by strategies obtained from finite models. The theorem below is the main result of this section. It states that to compute a near optimal strategy for Witsenhausen's counterexample, it is sufficient to compute an optimal strategy for the problem with finite observations obtained through uniform quantization of the observation spaces.

\begin{theorem}\label{thm3}
For any $\varepsilon>0$, there exists $(l,n(l))$ and $m \in \R_+$ such that an optimal policy $(\pi^{1*},\pi^{2*})$ in the set $\Pi_{l,n(l)}^{1,M} \times \Pi_{l,n(l)}^{2,M}$ for the cost $J_{l,n(l)}$ is $\varepsilon$-optimal for Witsenhausen's counterexample when $(\pi^{1*},\pi^{2*})$ is extended to $\sY^1 \times \sY^2$ via $\gamma^i = \pi^{i*} \circ q_{l,n(l)}$, $i=1,2$, where $M \coloneqq [-m,m]$ and $\Pi_{l,n(l)}^{i,M} \coloneqq \{\pi^i \in \Pi_{l,n(l)}^{i}: \pi^i(\sZ_{l,n}) \subset M\}$.
\end{theorem}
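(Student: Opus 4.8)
The plan is to verify that Witsenhausen's counterexample, through its static reduction, satisfies all hypotheses of Theorem~\ref{newthm2}, so that the conclusion is simply the specialization of Theorem~\ref{newthm2} to this two-agent model. By Proposition~\ref{newprop3}, the static reduction satisfies the conditions of Theorem~\ref{thm4}, namely Assumptions~\ref{newas1}-(a),(b),(c),(e), Assumption~\ref{as2}, continuity of $f(u^1,y^2)$ in its arguments, and the required uniform integrability of $\sup_{\bu \in G} \tilde{c}$. Theorem~\ref{thm4} then guarantees that the static reduction satisfies the full Assumption~\ref{newas1} together with Assumption~\ref{nnewas1}. Thus the machinery leading to Theorem~\ref{newthm2} is available, and the present statement is essentially a restatement of Theorem~\ref{newthm2} with $N=2$, with $\sU^1 = \sU^2 = \R$, with the single quantizer $q_{l,n}$ playing the role of both $Q_{l,n}^1$ and $Q_{l,n}^2$ (the observation spaces and nets coincide), and with the compact restriction set $G = M \times M$ where $M = [-m,m]$.

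First I would invoke Proposition~\ref{newprop3} and Theorem~\ref{thm4} to assert that Assumptions~\ref{newas1} and \ref{nnewas1} hold for the static reduction. Next I would apply Theorem~\ref{newthm2}: for the given $\varepsilon>0$ it produces a pair $(l,n(l))$ and a compact product set $G = G^1 \times G^2$ with each $G^i$ convex and compact such that an optimal (or almost optimal) policy $\underline{\pi}^* \in {\bf \Pi}_{l,n(l)}^G$ for $J_{l,n(l)}$, when extended via $\gamma^i = \pi^{i*}\circ Q_{l,n(l)}^i$, is $\varepsilon$-optimal for the original team. The remaining bookkeeping is to translate this abstract statement into the concrete notation of the theorem: because $\sU^1 = \sU^2 = \R$ and each $G^i$ is a compact convex subset of $\R$, each $G^i$ is a bounded interval, and by enlarging both intervals to a common symmetric interval $M = [-m,m]$ (which only enlarges the feasible set and hence cannot hurt optimality) we may take $G = M \times M$. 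This identifies $\Pi_{l,n(l)}^{i,M} = \{\pi^i \in \Pi_{l,n(l)}^i : \pi^i(\sZ_{l,n}) \subset M\}$ with the restricted strategy classes in Theorem~\ref{newthm2}, and identifies $q_{l,n(l)}$ with $Q_{l,n(l)}^i$ since the observation spaces, the compact exhaustions, and the nearest-neighbor quantizers are identical for both agents in this model.

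One technical point worth checking is that existence of an actual optimal $\underline{\pi}^*$ (not merely an almost optimal one) in the finite model $\Pi_{l,n(l)}^{1,M}\times\Pi_{l,n(l)}^{2,M}$ is justified, so that the statement can assert an \emph{optimal} finite policy. This follows because the domain $\sZ_{l,n(l)}$ is finite and the range constraint set $M$ is compact, so the finite-dimensional objective $J_{l,n(l)}(\pi^1,\pi^2)$ is a continuous function (the summand $\tilde c$ is continuous in $\bu$) of finitely many variables ranging over a compact set $M^{|\sZ_{l,n(l)}|}\times M^{|\sZ_{l,n(l)}|}$, and hence attains its minimum. I expect the main obstacle to be purely notational rather than mathematical: namely, carefully reconciling the general $N$-agent framework of Theorem~\ref{newthm2} with the compressed two-agent notation here, in particular justifying that replacing the two separate convex compact sets $G^1,G^2$ by a single symmetric interval $M$ preserves $\varepsilon$-optimality. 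Since enlarging the range constraint set only enlarges ${\bf \Pi}_{l,n(l)}^G$, the infimum of $J_{l,n(l)}$ over the larger set is no greater, so the extended policy remains $\varepsilon$-optimal; this is the only substantive verification beyond directly quoting Theorem~\ref{newthm2}.
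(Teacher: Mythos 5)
Your proposal follows essentially the same route as the paper: the paper obtains Theorem~\ref{thm3} precisely by noting that Proposition~\ref{newprop3} together with Theorem~\ref{thm4} makes Theorems~\ref{newthm1} and \ref{newthm2} applicable to Witsenhausen's counterexample, and then reading off the statement as the two-agent specialization of Theorem~\ref{newthm2}. The extra details you supply --- existence of an exact minimizer over the compact set $M^{|\sZ_{l,n}|}\times M^{|\sZ_{l,n}|}$ via continuity of $\tilde{c}$ in $\bu$, and the harmless enlargement of $G^1\times G^2$ to a symmetric square $M\times M$ (valid since Assumption~\ref{nnewas1} holds for every compact product set, so Lemma~\ref{convergence-unbounded} still applies to the enlarged set) --- are correct bookkeeping that the paper leaves implicit.
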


\section{The Gaussian Relay Channel Problem and Asymptotic Optimality of Quantized Policies}\label{gaussrelay}
\subsection{The Gaussian Relay Channel Problem and its Static Reduction}
An important dynamic team problem which has attracted interest is the Gaussian relay channel problem (see Fig.~\ref{fig2}) \cite{LipsaMartins,zaidi2013optimal}. Here, Agent~$1$ observes a noisy version of the state $x$ which has Gaussian distribution with zero mean and variance $\sigma_{x}^2$; that is, $y^1 \coloneqq x + v^0$ where $v^0$ is a zero mean and variance $\sigma_0^2$ Gaussian noise independent of $x$. Agent~$1$ decides its strategy $u^1$ based on $y^1$. For $i=2,\ldots,N$, Agent~$i$ receives $y^i \coloneqq u^{i-1} + v^{i-1}$ (a noisy version of the decision $u^{i-1}$ of Agent~$i-1$), where $v^{i-1}$ is a zero mean and variance $\sigma_{i-1}^2$ Gaussian noise independent of $\{x,v^{1},\ldots,v^{i-2},v^{i},\ldots,v^{N-1}\}$, and decides its strategy $u^{i}$.

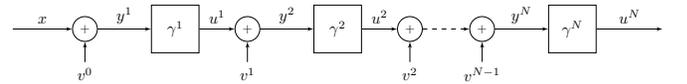
\begin{figure}[h]
\centering
\tikzstyle{int}=[draw, fill=white!20, minimum size=3em]
\tikzstyle{init} = [pin edge={to-,thin,black}]
\tikzstyle{sum} = [draw, circle, scale=0.8]
\scalebox{0.6}{
\begin{tikzpicture}[node distance=2cm,auto,>=latex']
    \node (x) [coordinate, node distance=2cm] {};
    \node [sum] (s0) [right of=x] {$+$};
    \node (v0) [below of=s0, node distance=1cm] {$v^0$};
    \node [int] (c1) [right of=s0] {$\gamma^1$};
    \node [sum] (s1) [right of=c1] {$+$};
    \node (v1) [below of=s1, node distance=1cm] {$v^1$};
    \node [int] (c2) [right of=s1] {$\gamma^2$};
    \node [sum] (s2) [right of=c2] {$+$};
    \node (v2) [below of=s2, node distance=1cm] {$v^2$};
    \node [sum] (s3) [right of=s2] {$+$};
    \node (v3) [below of=s3, node distance=1cm] {$v^{N-1}$};
    \node [int] (c3) [right of=s3] {$\gamma^{N}$};
    \node [coordinate] (end) [right of=c3, node distance=2cm] {};
    \path[->] (x) edge node {$x$} (s0);
    \path[->] (v0) edge node {} (s0);
    \path[->] (s0) edge node {$y^1$} (c1);
    \path[->] (c1) edge node {$u^1$} (s1);
    \path[->] (v1) edge node {} (s1);
    \path[->] (s1) edge node {$y^2$} (c2);
    \path[->] (c2) edge node {$u^2$} (s2);
    \path[->] (v2) edge node {} (s2);
    \path[->,dashed,->] (s2) edge node {} (s3);
    \path[->] (v3) edge node {} (s3);
    \path[->] (s3) edge node {$y^N$} (c3);
    \path[->] (c3) edge node {$u^N$} (end) ;
\end{tikzpicture}}
\caption{Gaussian relay channel.}
\label{fig2}
\end{figure}

The cost function of the team is given by
\begin{align}
c(x,\bu) \coloneqq \bigl( u^N - x \bigr)^2 + \sum_{i=1}^{N-1} l_i \bigl(u^i\bigr)^2, \nonumber
\end{align}
where $l_i \in \R_+$ for all $i=1,\ldots,N-1$. To ease the notation in the sequel, we simply take $\sigma_x = \sigma_0 = \sigma_1 = \ldots = \sigma_{N-1} =1$.
Recall that $g(y) \coloneqq \frac{1}{\sqrt{2\pi}} \exp{\{-y^2/2\}}$. Then we have
\begin{align}
\rP(y^1 \in S|x) &= \int_{S} g(y^1-x) m(dy^1)  \nonumber \\
\rP(y^i \in S|u^{i-1}) &= \int_{S} g(y^i-u^{i-1}) m(dy^{i}), \text{  for } i=2,\ldots,N. \nonumber
\end{align}
Recall also that $g(y-u) =f(u,y)\frac{1}{\sqrt{2\pi}}\exp{\{-(y)^2/2\}}$ , where $f(u,y)$ is defined in (\ref{eq20}).
Then, for any policy $\underline{\gamma}$, we have
\begin{align}
J(\underline{\gamma}) &= \int_{\sX \times \sY} c(x,\bu) \rP(dx,d\by) \nonumber \\
&= \int_{\sX\times\sY} c(x,\bu) \biggr[ f(x,y^1) \prod_{i=2}^N f(u^{i-1},y^i) \biggr] \text{ } \rP_{\sg}^{N+1}(dx,d\by), \nonumber
\end{align}
where $\rP_{\sg}^{N+1}$ denotes the product of $N+1$ zero mean and unit variance Gaussian distributions. Therefore, in the static reduction of Gaussian relay channel, we have the components $\tilde{c}(x,\by,\bu) \coloneqq c(x,\bu) \bigr[ f(x,y^1) \prod_{i=2}^N f(u^{i-1},y^i) \bigr]$ and $\widetilde{\rP}(dx,d\by) = \rP_{\sg}^{N+1}(dx,d\by)$. Analogous to Witsenhausen's counterexample, the agents observe independent zero mean and unit variance Gaussian random variables.

\subsection{Approximation of the Gaussian Relay Channel Problem}\label{gauss}

In this subsection, the approximation problem for the Gaussian relay channel is considered using the static reduction formulation.
Analogous to Section~\ref{sec4}, we prove that the conditions of Theorem~\ref{thm4} hold for Gaussian relay channel, and so Theorems~\ref{newthm1} and \ref{newthm2} can be applied.

The cost function of the static reduction is given by
\begin{align}
\tilde{c}(x,\by,\bu) \coloneqq c(x,\bu) \bigr[ f(x,y^1) \prod_{i=2}^N f(u^{i-1},y^i) \bigr] \nonumber
\end{align}
where $f(u,y)$ is given in (\ref{eq20}).

Recall the uniform quantizer $q_{l,n}$ on $L \coloneqq [l,-l]$ ($l \in \R_+$) having $n$ output levels from Section~\ref{sec4}.
We extend $q_{l,n}$ to $\R$ by mapping $\R \setminus L$ to $y_0=0$. Recall also the set $\sZ_{l,n} \coloneqq \{y_0,y_{1},\ldots,y_{n}\}$ and the probability measure $\rP_{l,n}$ on $\sZ_{l,n}$ given by
\begin{align}
\rP_{l,n}(y_i) &= \rP_{\sg}(q_{l,n}^{-1}(y_i)). \nonumber
\end{align}
Define $\Pi^i_{l,n} \coloneqq \{\pi^i: \sZ_{l,n} \rightarrow \sU^i, \text{ $\pi^i$ measurable}\}$ and
\begin{align}
J_{l,n}(\underline{\pi}) \coloneqq \int_{\sX} \sum_{\by \in \prod_{i=1}^N \sZ_{l,n}} \tilde{c}(x,\by,\underline{\pi}(\by)) \prod_{i=1}^N \rP_{l,n}(y_i) \rP_{\sg}(dx), \nonumber
\end{align}
where $\underline{\pi} \coloneqq (\pi^1,\ldots,\pi^N)$. Define $\tilde{\rP}_{l,n}(dx,d{\bf y}) \coloneqq \prod_{i=1}^N \rP_{l,n}(dy_i) \rP_{\sg}(dx)$.

\begin{proposition}\label{newprop4}
The Gaussian relay channel problem satisfies the conditions in Theorem~\ref{thm4}.
\end{proposition}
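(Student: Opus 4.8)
The plan is to verify the hypotheses of Theorem~\ref{thm4} for the relay channel exactly as Proposition~\ref{newprop3} does for Witsenhausen's counterexample, the only genuine differences being that there are now $N$ agents (so the two-agent arguments must be iterated) and that the first channel density $f(x,y^1)$ couples the \emph{unbounded} state $x$ to the observation $y^1$. Assumptions~\ref{newas1}-(a),(b),(c) and Assumption~\ref{as2} are immediate: $c(x,\bu) = (u^N-x)^2 + \sum_{i=1}^{N-1} l_i (u^i)^2$ is a polynomial in $(\by,\bu)$, hence continuous and bounded on compacta; each $\sU^i = \R$ is closed and convex; each $\sY^i = \R$ is locally compact; and every channel is additive Gaussian, so Assumption~\ref{as2} holds with $Q_i = \rP_{\sg}$ and continuous densities $f_i = f$ as in (\ref{eq20}). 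Thus only Assumption~\ref{newas1}-(e) and the uniform-integrability condition require work.

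For Assumption~\ref{newas1}-(e) I would first record the relay-channel analogue of Lemma~\ref{newlemma1}, which is in fact immediate: if $J(\underline{\gamma}) < \infty$ then $E[(u^j)^2] < \infty$ for every $j$, since $l_j (u^j)^2$ is literally a summand of the cost for $j < N$, while $E[(u^N)^2] < \infty$ follows from $E[(u^N - x)^2] < \infty$, $E[x^2] = 1$ and Minkowski's inequality. To fix Agent~$i$ at a constant I work in the original dynamic model and choose the constant by conditioning. Setting $\psi(a) := E\bigl[ (u^N)^2 + \sum_{i<j<N} l_j (u^j)^2 \bigm| u^i = a \bigr]$, the law of total expectation gives $E[\psi(u^i)] = E[(u^N)^2 + \sum_{i<j<N} l_j (u^j)^2] < \infty$, so there is a point $u^{i,*}$ in the support of $u^i$ with $\psi(u^{i,*}) < \infty$. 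The key observation is that replacing $\gamma^i$ by the constant $u^{i,*}$ makes the downstream chain $y^{i+1} = u^{i,*} + v^i$, $u^{i+1} = \gamma^{i+1}(y^{i+1})$, \dots\ produce perturbed actions $\tilde u^{i+1},\ldots,\tilde u^N$ whose joint law coincides with that of $u^{i+1},\ldots,u^N$ conditioned on $\{u^i = u^{i,*}\}$, because $v^i,\dots,v^{N-1}$ are independent of everything upstream. Using $E[(\tilde u^N - x)^2] \le 2E[(\tilde u^N)^2] + 2$, the perturbed cost is therefore bounded by $\sum_{j<i} l_j E[(u^j)^2] + l_i (u^{i,*})^2 + 3\psi(u^{i,*}) + 2 < \infty$; the case $i = N$ is trivial with $u^{N,*} = 0$.

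For the uniform-integrability condition it suffices to treat $G = [-M,M]^N$, since every compact product set lies in such a cube. Using that all factors are nonnegative and that $f(x,y^1)$ does not depend on $\bu$, I would bound
\begin{align}
\sup_{\bu \in G} \tilde{c}(x,\by,\bu) &\leq w_0(x)\, f(x,y^1) \prod_{i=2}^N w_i(y^i), \nonumber
\end{align}
where $w_0(x) := (M+|x|)^2 + M^2 \sum_{j=1}^{N-1} l_j$ dominates $\sup_{\bu} c(x,\bu)$ and $w_i(y^i) := \exp\{M|y^i|\} \geq \sup_{u \in [-M,M]} f(u,y^i)$ for $i \geq 2$. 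Grouping $(x,y^1)$ as one coordinate carrying $\rP_{\sg}\otimes\rP_{l,n}$ and each remaining $y^i$ separately, an iterated application of Lemma~\ref{uniform} (with the pair $(l,n)$ as the index) reduces the claim to the uniform integrability of $F_1(x,y^1) := w_0(x) f(x,y^1)$ with respect to $\{\rP_{\sg}\otimes\rP_{l,n}\}$ and of each $\exp\{M|y^i|\}$ with respect to $\{\rP_{l,n}\}$; the latter is handled verbatim as in (\ref{nneq5})--(\ref{nneq6}), using that $\rP_{l,n}$ is the pushforward of $\rP_{\sg}$ under $q_{l,n}$ and that $|q_{l,n}(y)| \le |y| + 1$.

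The main obstacle is the coupled factor $F_1$, which is new relative to Witsenhausen's counterexample because here the ``input'' to the first channel is the unbounded state $x$ rather than a bounded action, so the trick of majorizing over $u \in [-M,M]$ is unavailable. I would settle it by exhibiting a single $\rP_{\sg}\otimes\rP_{\sg}$-integrable dominating function: since $f(x,y^1) = \exp\{y^1 x - x^2/2\}$ and $|q_{l,n}(y^1)| \le |y^1| + 1$, the pushforward identity gives
\begin{align}
\int_{\{F_1 > R\}} F_1 \, d(\rP_{\sg}\otimes\rP_{l,n}) &\leq \int_{\{H > R\}} H \, d(\rP_{\sg}\otimes\rP_{\sg}), \nonumber
\end{align}
where $H(x,y) := w_0(x)\exp\{(|y|+1)|x| - x^2/2\}$ is independent of $(l,n)$. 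A direct Gaussian computation shows $H$ is integrable: the $y$-integral against $\rP_{\sg}$ produces a factor of order $\exp\{x^2/2 + |x|\}$, which together with the $e^{-x^2}$ coming from the $x$-factor of $H$ and from $\rP_{\sg}(dx)$ leaves the convergent integral $\int w_0(x)\,e^{|x| - x^2/2}\,dx$ of a quadratic against a Gaussian tail. Dominated convergence then forces the right-hand side to vanish as $R \to \infty$, uniformly in $(l,n)$, establishing the uniform integrability. Since $M$ is arbitrary, all hypotheses of Theorem~\ref{thm4} are met.
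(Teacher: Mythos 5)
Your proposal is correct and follows essentially the same route as the paper's proof: the same immediate verification of Assumptions~\ref{newas1}-(a),(b),(c) and \ref{as2}, the same law-of-total-expectation/conditioning argument (together with the second-moment analogue of Lemma~\ref{newlemma1}) for Assumption~\ref{newas1}-(e), and the same factorization $w_0(x)f(x,y^1)\prod_{i\geq 2}\exp\{M|y^i|\}$ combined with Lemma~\ref{uniform} and an $(l,n)$-independent integrable dominating function for the uniform-integrability condition. The only cosmetic difference is that you check integrability of the dominating function $H$ by completing the square directly, whereas the paper exploits the normalization $\int f(x,y^1)\,\rP_{\sg}(dy^1)=1$ to reduce it to $\int w_1(x)e^{|x|}\,d\rP_{\sg}<\infty$; the two computations are equivalent.
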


\begin{proof}
It is clear that Assumption~\ref{newas1}-(a),(b),(c) and Assumption~\ref{as2} hold. For any strategy $\underline{\gamma}$, let $E_{\underline{\gamma}}$ denote the corresponding expectation operation. To prove Assumption~\ref{newas1}-(e), pick $\underline{\gamma}$ with $J(\underline{\gamma})<\infty$. Analogous to Lemma~\ref{newlemma1}, one can prove that $E_{\underline{\gamma}} \bigl[ \bigl( u^N \bigr)^2 \bigr] < \infty$. For any $i=1,\ldots,N$, by the law of total expectation we can write
\begin{align}
J(\underline{\gamma}) &= E_{\underline{\gamma}} \biggl[ E_{\underline{\gamma}} \bigl[ c(x,\bu) \bigl| u^i \bigr] \biggr]  \nonumber \\
\intertext{and}
E_{\underline{\gamma}} \biggl[ \bigl( u^N \bigr)^2 \biggr] &= E_{\underline{\gamma}} \biggl[ E_{\underline{\gamma}} \bigl[ \bigl( u^N \bigr)^2 \bigl| u^i \bigr] \biggr]. \nonumber
\end{align}
Therefore, for each $i=1,\ldots,N$, there exists $u^{i,*} \in \sU^i$ such that $E_{\underline{\gamma}} \bigl[ c(x,\bu) \bigl| u^i = u^{i,*} \bigr] < \infty$ and $E_{\underline{\gamma}} \bigl[ \bigl( u^N \bigr)^2 \bigl| u^i = u^{i,*} \bigr]  < \infty$. Then we have
\begin{align}
&J(\underline{\gamma}^{-i},\gamma_{u^{i,*}}^i) = E_{\underline{\gamma}^{-i},\gamma_{u^{i,*}}^i}\biggl[ \bigl( u^N - x \bigr)^2 + \sum_{j=1}^{N-1} l_j \bigl(u^j\bigr)^2 \biggr] \nonumber \\
&\phantom{xxx}\leq E_{\underline{\gamma}^{-i},\gamma_{u^{i,*}}^i}\biggl[ 2 x^2 + 2 \bigl( u^N \bigr)^2 + \sum_{j=1}^{N-1} l_j \bigl(u^j\bigr)^2\biggr] \nonumber \\
&\phantom{xxx}=E_{\underline{\gamma}}\biggl[ \sum_{j=1}^{i-1} l_j \bigl(u^j\bigr)^2 + 2 x^2 \biggr] \nonumber \\
&\phantom{xxxxxxxx}+ E_{\underline{\gamma}}\biggl[ 2 \bigl( u^N \bigr)^2 + \sum_{j=i}^{N-1} l_j \bigl(u^j\bigr)^2 \biggr| u^i = u^{i,*} \biggr] <\infty. \nonumber
\end{align}
Therefore, Assumption~\ref{newas1}-(e) holds.

Analogous to the proof of Proposition~\ref{newprop3}, for $\tilde{\rP}_{l,n}$-uniform integrability, it is sufficient to consider compact sets of the form $[-M,M]^N$ for some $M \in \R_+$. Choose any $M \in \R_+$. Then we have
\begin{align}
w_1(x) &\coloneqq \sup_{\bu \in [-M,M]^N} \bigl( u^N - x \bigr)^2 + \sum_{i=1}^{N-1} l_i \bigl(u^i\bigr)^2 \nonumber \\
&= \bigl( M + x)^2 + \sum_{i=1}^{N-1} l_i M^2 \nonumber
\end{align}
and
\begin{align}
\sup_{\bu \in [-M,M]^N} \prod_{i=2}^N f(u^{i-1},y^i) &\leq \sup_{\bu \in [-M,M]^N} \prod_{i=2}^N  \exp{\{y^i u^{i-1}\}} \nonumber \\
&= \prod_{i=2}^N \exp{\{M |y^i|\}} \eqqcolon \prod_{i=2}^N w_{i,2}(y^i). \nonumber
\end{align}
It can be proved as in the proof of Proposition~\ref{newprop3} that $w_{i,2}$ is $\rP_{l,n}$-uniformly integrable for each $i=2,\ldots,N$. Let $\tilde{w}_1(x,y^1) \coloneqq w_1(x) f(x,y^1)$ and so
\begin{align}
\int_{\sX \times \sY^1} \tilde{w}_1(x,y^1) \exp{\{|x|\}} \text{ } d\rP_{\sg}^2 = \int_{\sX} w_1(x) \exp{\{|x|\}} \text{ } d\rP_{\sg} < \infty. \nonumber
\end{align}
Then, we have
\begin{align}
&\lim_{R\rightarrow\infty} \sup_{l,n} \int_{\{\tilde{w}_1 > R\}} \tilde{w}_1(x,y^1) \text{ } d\rP_{l,n} d\rP_{\sg} \nonumber \\
&\phantom{xxx}= \lim_{R\rightarrow\infty} \sup_{l,n} \int_{\{\tilde{w}_1 > R\}} \tilde{w}_1(x,q_{l,n}(y^1)) \text{ } d\rP_{\sg}^2 \nonumber \\
&\phantom{xxx}\leq \lim_{R\rightarrow\infty} \int_{\{\tilde{w}_1 > R\}} w_1(x) \exp{\bigl\{\frac{-x^2+2xy^1+2|x|}{2}\bigr\}} \text{ } d\rP_{\sg}^2 \nonumber \\
&\phantom{xxx}= \lim_{R\rightarrow\infty} \int_{\{\tilde{w}_1 > R\}} \tilde{w}_1(x,y^1) \exp{\{|x|\}} \text{ } d\rP_{\sg}^2 =0. \nonumber
\end{align}
Hence, by Lemma~\ref{uniform}, the product $\tilde{w}_1 \prod_{i=2}^N w_{i,2}$ is $\tilde{\rP}_{l,n}$-uniformly integrable. Therefore, $\sup_{{\bf u} \in [-M,M]^N} \tilde{c}(x,{\bf y},{\bf u})$ is also $\tilde{\rP}_{l,n}$-uniformly integrable. Since $M$ is arbitrary, this completes the proof.
\end{proof}

The preceding proposition and Theorem~\ref{thm4} imply, via Theorems~\ref{newthm1} and \ref{newthm2}, that an optimal strategy for Gaussian relay channel can be approximated by strategies obtained from finite models. The following theorem is the main result of this section.

\begin{theorem}\label{thm5}
For any $\varepsilon>0$, there exists $(l,n(l))$ and $m \in \R_+$ such that an optimal policy $\underline{\pi}^*$ in the set $\prod_{i=1}^N \Pi_{l,n(l)}^{i,M}$ for the cost $J_{l,n(l)}$ is $\varepsilon$-optimal for Gaussian relay channel when $\underline{\pi}^*$ is extended to $\sY$ via $\gamma^i = \pi^{i*} \circ q_{l,n(l)}$, $i=1,\ldots,N$, where $M \coloneqq [-m,m]$ and $\Pi_{l,n(l)}^{i,M} \coloneqq \{\pi^i \in \Pi_{l,n(l)}^{i}: \pi^i(\sZ_{l,n}) \subset M\}$.
\end{theorem}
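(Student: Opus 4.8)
The plan is to obtain Theorem~\ref{thm5} as an essentially immediate consequence of the unbounded-cost approximation machinery, specialized to the static reduction of the Gaussian relay channel. All of the analytic work has in fact already been discharged in Proposition~\ref{newprop4}; what remains is to chain together the earlier results and to perform one small bookkeeping step that converts the abstract compact range produced by Theorem~\ref{newthm2} into the symmetric interval $M = [-m,m]$ that appears in the statement.

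First I would invoke Proposition~\ref{newprop4}, which verifies that the Gaussian relay channel satisfies every condition of Theorem~\ref{thm4}. Feeding this into Theorem~\ref{thm4} shows that the static reduction of the relay channel satisfies both Assumption~\ref{newas1} and Assumption~\ref{nnewas1}, so that Theorem~\ref{newthm2} applies to it. Because the strategy spaces of the original dynamic problem and of its static reduction coincide and induce identical costs (as recorded in the derivation of the static reduction), the conclusion of Theorem~\ref{newthm2} transfers directly to the Gaussian relay channel. Thus, for the given $\varepsilon>0$, there exist a pair $(l,n(l))$ and a compact set $G = \prod_{i=1}^N G^i$, with each $G^i \subset \sU^i = \R$ convex and compact, such that an optimal (or almost optimal) policy in ${\bf \Pi}_{l,n(l)}^G$, extended to $\sY$ via $\gamma^i = \pi^{i*}\circ q_{l,n(l)}$, is $\varepsilon$-optimal (here the common quantizer $q_{l,n(l)}$ plays the role of $Q_{l,n(l)}^i$ in the general setup).

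It then remains to replace the abstract range $G$ by a cube $M^N = [-m,m]^N$. The key observation is that every action space equals the whole real line: since each $G^i$ is a compact, hence bounded, subset of $\R$, I can choose $m \in \R_+$ with $G^i \subset [-m,m] =: M$ for all $i$, whence $G \subset M^N$ and $\prod_{i=1}^N \Pi_{l,n(l)}^{i,M} = {\bf \Pi}_{l,n(l)}^{M^N}$. The point to confirm is that enlarging the range constraint from $G$ to $M^N$ preserves the conclusion. This holds because the two-sided bounding argument of Theorem~\ref{newthm2} runs verbatim with $G$ replaced by any larger compact product set: Assumption~\ref{nnewas1}, established in Proposition~\ref{newprop4} for \emph{all} compact product sets, covers $M^N$, so Lemma~\ref{convergence-unbounded} applies to the constraint $M^N$; and enlarging the feasible set only decreases the approximate infima while keeping $\inf_{\underline{\gamma}\in{\bf \Gamma}_{c,M^N}} J(\underline{\gamma}) \leq \inf_{\underline{\gamma}\in{\bf \Gamma}_{c,G}} J(\underline{\gamma}) < J^*+\varepsilon$. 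Re-running Theorem~\ref{newthm2} with $G = M^N$ therefore yields exactly the claimed statement.

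The main obstacle I anticipate is not analytic but is precisely this passage from the existentially produced range $G$ to the prescribed symmetric cube $M^N$: one must be sure that re-optimizing over the strictly larger constraint set $\prod_{i=1}^N \Pi_{l,n(l)}^{i,M}$ does not destroy near-optimality. As indicated above, this is guaranteed because the uniform-integrability hypothesis of Assumption~\ref{nnewas1} was verified for every compact product set, so no additional estimate beyond those in Proposition~\ref{newprop4} is required.
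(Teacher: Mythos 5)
Your proposal is correct and follows essentially the same route as the paper, which obtains Theorem~\ref{thm5} directly by chaining Proposition~\ref{newprop4} and Theorem~\ref{thm4} through Theorems~\ref{newthm1} and \ref{newthm2}. The only extra content in your write-up is the explicit bookkeeping step passing from the abstract compact range $G$ to the cube $[-m,m]^N$, which the paper leaves implicit (it already notes in the proofs of Propositions~\ref{newprop3} and \ref{newprop4} that it suffices to consider compact sets of this form, precisely because any compact product set in $\R^N$ is contained in such a cube and Assumption~\ref{nnewas1} was verified for all of them); your justification of that step is sound.
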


\section{Discretization of the Action Spaces and Asymptotic Optimality of Finite Model Representations for Team Problems}\label{discact}

For computing near optimal strategies for static team problems using numerical algorithms, the action spaces $\sU^i$ must be finite. In this section, we show that the action spaces can be taken to be finite in finite observation models, if a sufficiently large number of points are used for accurate approximation. In this section, we consider the most general case studied in Section~\ref{unboundedcase}.

We note that the results that will be derived in this section can be applied to the dynamic teams which admit static reduction and satisfy conditions in Theorem~\ref{thm4}.

Recall the finite observation models constructed in Section~\ref{unboundedcase}. For each $(l,n)$, the finite model have the following components:
$\bigl\{ \sX, \sZ_{l,n}^i, \sU^i, W_{l,n}^i(\,\cdot\,|x), c, \rP, i \in {\cal N} \bigr\}$, where $\sX$ is the state space, $\sZ_{l,n}^i$ is the observation space for Agent~$i$, $\sU^i$ is the action space for Agent~$i$, $W_{l,n}^i(\,\cdot\,|x)$ observation channel from state to the observation of Agent~$i$, $c$ is the cost function, and $\rP$ is the distribution of the state. Furthermore, strategy spaces are defined as $\Pi_{n,l}^i \coloneqq \bigl\{\pi^i: \sZ_{l,n}^i \rightarrow \sU^i, \text{$\pi^i$ measurable}\bigr\}$ and  ${\bf \Pi}_{l,n} \coloneqq \prod_{i=1}^N \Pi_{l,n}^{i}$. Then the cost function $J_{l,n}: {\bf \Pi}_{l,n} \rightarrow [0,\infty)$ is given by
\begin{align}
J_{l,n}(\underline{\pi}) \coloneqq \int_{\sX \times \sZ_{l,n}} c(x,{\bf y},{\bf u}) \rP_{l,n}(dx,d{\bf y}), \nonumber
\end{align}
where $\underline{\pi} = (\pi^1,\ldots,\pi^N)$, ${\bf u} = \underline{\pi}({\bf y})$, $\sZ_{l,n} = \prod_{i=1}^N \sZ_{l,n}^i$, and $\rP_{l,n}(dx,d{\bf y}) = \rP(dx) \prod_{i=1}^N W_{l,n}^i(dy^{i}|x)$.




The theorem below is the main result of this section which states that one can approximate optimal strategy in ${\bf \Pi}_{l,n}$ by strategies taking values in a finite set.

\begin{theorem}\label{newthm3}
Suppose that original static team problem satisfies Assumptions~\ref{newas1} and \ref{nnewas1}. Then, for each $(l,n)$ and for any $\varepsilon>0$, there exist finite sets $\sU^i_{\varepsilon} \subset \sU^i$ for $i=1,\ldots,N$ such that
\begin{align}
\inf_{\underline{\pi} \in {\bf \Pi}_{l,n}^{\varepsilon}} J_{l,n}(\underline{\pi}) < J_{l,n}^* + \varepsilon, \nonumber
\end{align}
where ${\bf \Pi}_{l,n}^{\varepsilon} \coloneqq \bigl\{ \underline{\pi} \in {\bf \Pi}_{l,n}: \pi^i(\sZ_{l,n}^i) \subset \sU^i_{\varepsilon}, i \in {\cal N} \bigr\}$.
\end{theorem}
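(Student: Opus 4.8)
The plan is to exploit the fact that, for a fixed pair $(l,n)$, each observation space $\sZ_{l,n}^i$ is finite, so any policy $\underline{\pi} \in {\bf \Pi}_{l,n}$ is completely determined by the finitely many action values $\{\pi^i(z): z \in \sZ_{l,n}^i,\, i=1,\ldots,N\}$, and the cost decomposes as a finite sum over $\by \in \sZ_{l,n}$ of the partial costs
\begin{align}
h_{\by}(\bu) \coloneqq \int_{\sX} c(x,\by,\bu) \prod_{i=1}^N W_{l,n}^i(\{y^i\}|x)\,\rP(dx), \nonumber
\end{align}
evaluated at $\bu = \underline{\pi}(\by)$. The idea is to first confine a near-optimal policy to a compact, convex range $G = \prod_{i=1}^N G^i$, and then replace its finitely many action values by nearby points of a sufficiently fine finite net of $G$, controlling the resulting change in $J_{l,n}$ through the continuity of the maps $h_{\by}$.

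First I would verify that $J_{l,n}^* < \infty$, which is needed even to make sense of a near-minimizer. By Lemma~\ref{newprop2} the original problem admits a finite-cost policy whose range lies in a compact convex set of product form; the constant policy into that set then has $J_{l,n}$-cost at most $\int w_G \, d\rP_{l,n}$, and Assumption~\ref{nnewas1} forces $\sup_{l,n}\int w_G\,d\rP_{l,n} < \infty$, so $J_{l,n}^* < \infty$. I would then fix a near-minimizer $\underline{\pi}_0 \in {\bf \Pi}_{l,n}$ with $J_{l,n}(\underline{\pi}_0) < J_{l,n}^* + \varepsilon/2$. Because $\sZ_{l,n}^i$ is finite, the range of $\pi_0^i$ is a finite subset of $\sU^i$; I would take $G^i$ to be its closed convex hull, which is compact and convex by Theorem~\ref{clconv} (since $\sU^i$ is a closed convex subset of a completely metrizable locally convex space), and set $G = \prod_{i=1}^N G^i$, so that $\underline{\pi}_0(\sZ_{l,n}) \subset G$.

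Next I would show each $h_{\by}$ is continuous on $G$: for $\bu \in G$ one has $c(x,\by,\bu) \le w_G(x,\by)$, and $w_G$ is $\rP_{l,n}$-integrable by Assumption~\ref{nnewas1} (here $G$ is compact of product form), so continuity of $c$ in $\bu$ and dominated convergence give continuity of $h_{\by}$ on the compact $G$, hence uniform continuity. Since only finitely many $\by$ occur, there is a single $\delta>0$ with $|h_{\by}(\bu) - h_{\by}(\bu')| < \varepsilon/(2|\sZ_{l,n}|)$ whenever $\bu,\bu' \in G$ are within $\delta$. I would then choose each $\sU^i_{\varepsilon}$ to be a finite net of $G^i$ fine enough that $\prod_{i=1}^N \sU^i_{\varepsilon}$ is a $\delta$-net of $G$, and define $\underline{\pi}'$ by rounding each value $\pi_0^i(z)$ to a nearest point of $\sU^i_{\varepsilon}$. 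Then $\underline{\pi}' \in {\bf \Pi}_{l,n}^{\varepsilon}$, each $\underline{\pi}'(\by)$ lies within $\delta$ of $\underline{\pi}_0(\by)$, and summing the partial-cost bounds over $\by$ yields $|J_{l,n}(\underline{\pi}') - J_{l,n}(\underline{\pi}_0)| < \varepsilon/2$, whence $J_{l,n}(\underline{\pi}') < J_{l,n}^* + \varepsilon$.

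The main point to get right — and the reason the compactification step cannot be skipped — is that a finite net approximating the action values exists only over a bounded region; over the full, possibly non-compact $\sU^i$ no finite set would do. Confining the near-optimal policy to the compact convex $G$ via Theorem~\ref{clconv} is therefore essential, and the uniform-integrability condition in Assumption~\ref{nnewas1} is precisely what supplies the $\rP_{l,n}$-integrable dominating function $w_G$ used both to bound $J_{l,n}^*$ and to run the dominated-convergence argument that yields continuity of the $h_{\by}$. The remaining steps are routine net and summation estimates.
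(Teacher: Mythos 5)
Your proof is correct and follows essentially the same route as the paper's: fix a near-minimizer in ${\bf \Pi}_{l,n}$, confine its (automatically finite) range to a compact convex product set $G$ via Theorem~\ref{clconv}, round its values to a sufficiently fine finite net of $G$, and control the resulting cost change using continuity of $c$ in $\bu$ together with the dominating function $w_G$, which is $\rP_{l,n}$-integrable by Assumption~\ref{nnewas1}. The only cosmetic differences are that you decompose $J_{l,n}$ as a finite sum over $\sZ_{l,n}$ and invoke uniform continuity of the partial costs where the paper applies dominated convergence directly along a sequence of $1/k$-nets, and that you add an explicit (and harmless) verification that $J_{l,n}^* < \infty$, which the paper leaves implicit.
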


\begin{proof}
%

Fix any $(l,n)$ and $\varepsilon$. Let us choose $\underline{\pi}^{\varepsilon} \in {\bf \Pi}_{l,n}$ such that
\begin{align}
J_{l,n}(\underline{\pi}^{\varepsilon}) < \inf_{\underline{\pi} \in {\bf \Pi}_{l,n}} J_{l,n}(\underline{\pi}) + \frac{\varepsilon}{2}. \nonumber
\end{align}
Note that for any $i=1,\ldots,N$, the range of $\pi^{i,\varepsilon}$ is a finite subset of $\sU^i$ and so, is contained in some compact and convex subset $G^i$ of $\sU^i$. Define $G = \prod_{i=1}^N G^i$.

Let $\rho_i$ denote the metric on $\sU^i$. Since $G^i$ is compact, one can find a finite set $\sU^i_k \coloneqq \{u_{i,1},\ldots,u_{i,i_k}\} \subset G^i$ which is a $1/k$-net in $G^i$. Define $\Pi_k^i \coloneqq \bigl\{ \pi^i \in \Pi_{l,n}^{G^i} : \pi^i(\sZ_{l,n}^i) \subset \sU_k^i \bigr\}$ and ${\bf \Pi}_k = \prod_{i=1}^N \Pi_k^i$. For $\pi^{i,\varepsilon}$, we let
\begin{align}
\pi_k^{i,\varepsilon}(y) \coloneqq \argmin_{u \in \sU_k^i} \rho_i(\pi^{i,\varepsilon}(y),u). \nonumber
\end{align}
Hence
\begin{align}
\sup_{y \in \sZ_{l,n}^i} \rho_i(\pi^{i,\varepsilon}(y),\pi_k^{i,\varepsilon}(y)) < 1/k. \label{eq5}
\end{align}
We define $\underline{\pi}_k^{\varepsilon} = (\pi^{1,\varepsilon}_k,\ldots,\pi^{N,\varepsilon}_k)$.
Then we have
\begin{align}
&\inf_{\underline{\pi} \in {\bf \Pi}_{k}} J_{l,n}(\underline{\pi}) - \inf_{\underline{\pi} \in {\bf \Pi}_{l,n}} J_{l,n}(\underline{\pi}) < \inf_{\underline{\pi} \in {\bf \Pi}_{k}} J_{l,n}(\underline{\pi}) - J_{l,n}(\underline{\pi}^{\varepsilon}) + \frac{\varepsilon}{2} \nonumber \\
&\phantom{xxx}\leq J_{l,n}(\underline{\pi}_k^{\varepsilon}) - J_{l,n}(\underline{\pi}^{\varepsilon}) + \frac{\varepsilon}{2} \nonumber \\
&\phantom{xxx}\leq \int_{\sX\times\sZ_{l,n}} \bigl| c(x,\by,\underline{\pi}_k^{\varepsilon}) - c(x,\by,\underline{\pi}^{\varepsilon}) \bigr| \text{ } d\rP_{l,n} + \frac{\varepsilon}{2} \nonumber
\end{align}
The last integral converges to zero as $k\rightarrow\infty$ by the dominated convergence theorem since: (i) $c(x,\by,\underline{\pi}_k^{\varepsilon}) \rightarrow c(x,\by,\underline{\pi}^{\varepsilon})$ as $k\rightarrow\infty$ by (\ref{eq5}) and Assumption~\ref{newas1}-(a), (ii) $c(x,\by,\underline{\pi}_k^{\varepsilon}),c(x,\by,\underline{\pi}^{\varepsilon}) \leq w_G(x,\by)$ for all $(x,\by) \in \sX\times\sZ_{l,n}$, and (iii) $w_G$ is $\rP_{l,n}$-integrable. Therefore, there exists sufficiently large $k_0$ such that the last expression is less than $\varepsilon$.
By choosing $\sU_{\varepsilon}^i = \sU^i_{k_0}$, for $i=1,\ldots,N$, the proof is complete.
\end{proof}

Consider the finite {\it observation} models introduced in Section~\ref{sec4} that approximate the Witsenhausen's counterexample. For any $m \in \R_+$ and $k \in \R_+$, let $\sq_{m,k}: M \rightarrow \{u_1,\ldots,u_k\}$ denote the uniform quantizer with $k$ output levels (recall that $M \coloneqq [-m,m]$).
Note that $\sq_{m,k}$ is a quantizer applied to subsets of action spaces $\sU^i = \R$, $i=1,2$ (not to be confused with $q_{l,n}$ in Section~\ref{sec4}).
The preceding theorem implies that for each $(l,n)$ and $\varepsilon>0$, there exists a $m \in \R_+$ and $k \in \R_+$ such that
\begin{align}
\inf_{\underline{\pi} \in {\bf \Pi}_{l,n}^{m,k}} J_{l,n}(\underline{\pi}) < J_{l,n}^* + \varepsilon, \nonumber
\end{align}
where ${\bf \Pi}_{l,n}^{m,k} \coloneqq \bigl\{ \underline{\pi} \in {\bf \Pi}_{l,n}: \pi^i(\sZ_{l,n}^i) \subset \sU_{m,k}, i=1,2 \bigr\}$ and $\sU_{m,k} = \{u_1,\ldots,u_k\}$ is the set of output levels of $\sq_{m,k}$.


Therefore, to compute a near optimal strategy for Witsenhausen's counterexample, it is sufficient to compute an optimal strategy for the finite model that is obtained through uniform quantization of observation and action spaces (i.e., $\R$) on finite grids when the
number of grid points is sufficiently large. In particular, through constructing the uniform quantization so that both the {\it granular region} and the {\it granularity} of the quantizers are successively refined (that is the partitions generated by the quantizers are successively nested), we have the following proposition which lends itself to a numerical algorithm.

\begin{theorem}
There exists a sequence of finite models obtained through a successive refinement of the measurement and action set partitions generated by uniform quantizers whose optimal costs will converge to the cost of the Witsenhausen's counterexample.
\end{theorem}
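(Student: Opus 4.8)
The plan is to assemble the final theorem as a direct consequence of the three approximation results already established for Witsenhausen's counterexample, combining them through a diagonal-type argument. The three ingredients are: Theorem~\ref{thm3}, which shows that for any $\varepsilon>0$ there exist quantization parameters $(l,n(l))$ and a granular region $M=[-m,m]$ so that an optimal policy of the finite-\emph{observation} model $J_{l,n(l)}$, restricted to actions in $M$, is $\varepsilon$-optimal for the counterexample; Theorem~\ref{newthm3} (specialized in the subsequent discussion to the counterexample), which shows that for each fixed $(l,n)$ the action space can be further discretized by a uniform quantizer $\sq_{m,k}$ with $k$ output levels so that the fully finite model is within $\varepsilon$ of $J_{l,n}^*$; and the fact, emphasized in the surrounding text, that both the granular regions and the granularities can be chosen to be successively \emph{nested}.

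First I would make the nesting explicit. I would fix a sequence $\varepsilon_j \downarrow 0$ and, invoking the two theorems above in turn, produce for each $j$ a finite model indexed by parameters $(l_j, n_j, m_j, k_j)$ whose optimal cost, call it $J^*_{(j)}$, satisfies $J^*_{(j)} < J^* + \varepsilon_j$ where $J^*$ is the optimal cost of the counterexample. The key structural requirement is that the observation-space partitions (governed by the uniform quantizer $q_{l,n}$ on the net refining both the interval $L=[-l,l]$ and the number of levels $n$) and the action-space partitions (governed by $\sq_{m,k}$) be chosen so that enlarging the index refines the previous partition; concretely one arranges $l_{j+1}\ge l_j$, $m_{j+1}\ge m_j$, and the level counts along a dyadic or otherwise nested schedule so that each finite model's feasible policy set contains (an embedding of) its predecessor's. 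This nesting guarantees that the sequence of finite models is genuinely a \emph{successive refinement} rather than an unrelated family, which is exactly the qualitative content the theorem asserts.

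Next I would establish convergence of the optimal costs. The upper bound $\limsup_j J^*_{(j)} \le J^*$ is immediate from the two approximation theorems, since by construction $J^*_{(j)} < J^* + \varepsilon_j$ and $\varepsilon_j\to 0$. For the matching lower bound $\liminf_j J^*_{(j)} \ge J^*$ I would argue that any optimal (or near-optimal) policy for the $j$-th finite model extends, via $\gamma^i = \pi^{i*}\circ q_{l_j,n_j}$, to an admissible policy for the original counterexample whose cost is close to $J^*_{(j)}$; this is precisely the content furnished by Lemma~\ref{convergence-unbounded} together with the uniform-integrability verification carried out in Proposition~\ref{newprop3}. Since every admissible policy for the counterexample has cost at least $J^*$, the extended policies' costs are bounded below by $J^*$ up to a vanishing error, forcing $\liminf_j J^*_{(j)}\ge J^*$. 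Combining the two bounds yields $\lim_j J^*_{(j)} = J^*$, which is the assertion of the theorem.

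The main obstacle is bookkeeping in the lower-bound half: one must be careful that the two successive quantizations (first of observations, then of actions) are interleaved so that the error from each stays controlled \emph{uniformly} as the refinement proceeds, rather than being re-introduced when a finer grid is chosen. The uniform-integrability hypothesis, verified for Witsenhausen's counterexample in Proposition~\ref{newprop3} via Lemma~\ref{uniform}, is what prevents mass from escaping to infinity when the granular region $[-m_j,m_j]$ and the observation interval $[-l_j,l_j]$ grow; I would lean on this to pass the estimates through the limit cleanly. A secondary subtlety is ensuring that restricting the action-space optimization to the finite grid $\sU_{m_j,k_j}$ (rather than to all of the compact range $M$) costs only $\varepsilon_j$, which is exactly what Theorem~\ref{newthm3} delivers for each fixed observation model; one simply applies it with tolerance $\varepsilon_j$ after the observation parameters $(l_j,n_j)$ have been fixed. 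With these pieces in hand the proof is essentially a clean synthesis, and the theorem follows.
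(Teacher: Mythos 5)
Your overall architecture --- a diagonal argument stitching together Theorem~\ref{thm3} (finite observations, compact action range) and Theorem~\ref{newthm3} (finite action grids), with nesting arranged along a dyadic parameter schedule --- is exactly the synthesis the paper intends; the paper states this theorem with no separate proof, precisely as such a consequence, and your upper-bound half is fine. The genuine problem is in your lower bound. You justify $\liminf_j J^*_{(j)} \ge J^*$ by applying Lemma~\ref{convergence-unbounded} to near-optimal policies of the $j$-th finite models, and you claim that the uniform-integrability hypothesis (Assumption~\ref{nnewas1}, verified in Proposition~\ref{newprop3}) ``prevents mass from escaping to infinity when the granular region $[-m_j,m_j]$ grows.'' That is not what the hypothesis gives. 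Both Lemma~\ref{convergence-unbounded} and Assumption~\ref{nnewas1} are formulated for a \emph{fixed} compact action range $G$: the uniform integrability of $w_G$ is uniform in $(l,n)$ only, not in $G$. For Witsenhausen's counterexample, $w_{G}$ with $G = [-M,M]^2$ is of order $\bigl(l(M+|y^1|)^2 + 4M^2\bigr)\exp\{M|y^2|\}$, which blows up in $M$, and along your sequence $m_j$ must in general grow (the compact range produced by Lemma~\ref{newprop2} depends on $\varepsilon_j$). So the step ``the extension's cost is close to $J^*_{(j)}$'' has, as written, no justification: the error bound you need is not uniform over growing action ranges.

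The fix is the order of quantifiers, and it is already built into the proof of Theorem~\ref{newthm2}: for each $j$, first fix the compact range $M_j$ (via Lemma~\ref{newprop2}), \emph{then} choose $(l_j,n_j)$ so large that, in addition to near-optimality of the finite-observation model, the quantitative bound
\begin{align}
\sup_{\underline{\pi} \in {\bf \Pi}^{G_j}_{l_j,n_j}} \bigl| J_{l_j,n_j}(\underline{\pi}) - J(\underline{\pi} \circ Q_{l_j,n_j}) \bigr| \leq \varepsilon_j, \qquad G_j \coloneqq M_j \times M_j, \nonumber
\end{align}
holds (this uniform-over-policies form of Lemma~\ref{convergence-unbounded} is available exactly because $G_j$ is fixed before $(l_j,n_j)$), and only then discretize the actions \emph{inside} $M_j$ --- i.e., insist that the grid $\sU_{m_j,k_j}$ lies in $M_j$, rather than letting the action-discretization step hand you a possibly larger compact set. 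Then every policy of the $j$-th fully finite model lies in ${\bf \Pi}^{G_j}_{l_j,n_j}$, so its extension is admissible for the counterexample and $J^*_{(j)} \geq J^* - \varepsilon_j$ follows per $j$, with no uniformity in a growing $G$ ever needed. With that reordering (and your dyadic nesting bookkeeping, which is unaffected), the argument closes.
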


\section{conclusion}\label{sec5}

Approximation of both static and dynamic team problems by finite models was considered. Under mild technical conditions, we showed that the finite model obtained by quantizing uniformly the observation and action spaces on finite grids provides a near optimal strategy if the number of grid points is sufficiently large. Using this result, an analogous approximation results were also established for the well-known counterexample of Witsenhausen and Gaussian relay channel. Our approximation approach to the Witsenhausen's counterexample thus provides, to our knowledge, the first rigorously established result that for any $\varepsilon > 0$, one can construct an $\varepsilon$ optimal strategy through an explicit solution of a conceptually simpler problem.

\section{Acknowledgements}

The authors are grateful to Professor Tamer Ba\c{s}ar for his technical comments and encouragement.


\begin{thebibliography}{10}
\providecommand{\url}[1]{#1}
\csname url@samestyle\endcsname
\providecommand{\newblock}{\relax}
\providecommand{\bibinfo}[2]{#2}
\providecommand{\BIBentrySTDinterwordspacing}{\spaceskip=0pt\relax}
\providecommand{\BIBentryALTinterwordstretchfactor}{4}
\providecommand{\BIBentryALTinterwordspacing}{\spaceskip=\fontdimen2\font plus
\BIBentryALTinterwordstretchfactor\fontdimen3\font minus
  \fontdimen4\font\relax}
\providecommand{\BIBforeignlanguage}[2]{{%
\expandafter\ifx\csname l@#1\endcsname\relax
\typeout{** WARNING: IEEEtran.bst: No hyphenation pattern has been}%
\typeout{** loaded for the language `#1'. Using the pattern for}%
\typeout{** the default language instead.}%
\else
\language=\csname l@#1\endcsname
\fi
#2}}
\providecommand{\BIBdecl}{\relax}
\BIBdecl

\bibitem{mar55}
J.~Marschak, ``Elements for a theory of teams,'' \emph{Management Science},
  vol.~1, pp. 127--137, 1955.

\bibitem{rad62}
R.~Radner, ``Team decision problems,'' \emph{Ann. Math. Statist.}, vol.~33, pp.
  857--881, 1962.

\bibitem{wit71}
H.~Witsenhausen, ``Separation of estimation and control for discrete time
  systems,'' \emph{Proceedings of the IEEE}, vol.~59, pp. 1557--1566, 1971.

\bibitem{wit75}
------, ``The intrinsic model for discrete stochastic control: Some open
  problems,'' \emph{Lecture Notes in Econ. and Math. Syst., Springer-Verlag},
  vol. 107, pp. 322--335, 1975.

\bibitem{wit88}
------, ``Equivalent stochstic control problems,'' \emph{Math. Control Signal
  Systems}, vol.~1, no.~1, pp. 3--11, 1988.

\bibitem{WitsenStandard}
H.~S. Witsenhausen, ``A standard form for sequential stochastic control,''
  \emph{Mathematical Systems Theory}, vol.~7, pp. 5--11, 1973.

\bibitem{WitsenhausenSIAM71}
------, ``On information structures, feedback and causality,'' \emph{SIAM J.
  Control}, vol.~9, pp. 149--160, May 1971.

\bibitem{Arrow1985}
K.~J. Arrow, ``The informational structure of the firm,'' \emph{American
  Economic Review}, vol.~75, pp. 303--307, 1985.

\bibitem{vanZandt}
T.~van Zandt, ``Decentralized information processing in the theory of
  organizations.''

\bibitem{GuYuBaLa15}
A.~Gupta, S.~Y\"uksel, T.~Basar, and C.~Langbort, ``On the existence of optimal
  policies for a class of static and sequential dynamic teams,'' \emph{SIAM J.
  Control Optim.}, vol.~53, no.~3, pp. 1681--1712, 2015.

\bibitem{WuVer11}
Y.~Wu and S.~Verd\'u, ``Witsenhausen's counterexample: a view from optimal
  transport theory,'' in \emph{Proceedings of the IEEE Conference on Decision
  and Control, Florida, USA}, December 2011, pp. 5732--5737.

\bibitem{wit68}
H.~Witsenhausen, ``A counterexample in stochastic optimum control,'' \emph{SIAM
  J. Control Optim.}, vol.~6, no.~1, pp. 131--147, 1968.

\bibitem{KraMar82}
J.~L.~S. J.C.~Krainak and S.~Marcus, ``Static team problems -- part {I}:
  {S}ufficient conditions and the exponential cost criterion,'' \emph{IEEE
  Transactions Automatic Contr.}, vol.~27, pp. 839--848, April 1982.

\bibitem{WaSc00}
P.~D. Waal and J.~V. Schuppen, ``A class of team problems with discrete action
  spaces: optimality conditions based on multimodularity,'' \emph{SIAM J.
  Control Optim.}, vol.~38, no.~3, pp. 875--892, 2000.

\bibitem{YukselBasarBook}
S.~Y\"uksel and T.~Ba\c{s}ar, \emph{Stochastic Networked Control Systems:
  Stabilization and Optimization under Information Constraints}.\hskip 1em plus
  0.5em minus 0.4em\relax New York, NY: Springer-Birkh\"auser, 2013.

\bibitem{LiMaSh09}
N.~Li, R.~Marden, and J.~Shamma, ``Learning approach to the {W}itsenhausen
  counterexample from a view of potential games,'' in \emph{CDC 2009},
  Shanghai, Dec. 2009, pp. 157--162.

\bibitem{LeLaHo01}
J.~Lee, E.~Lau, and Y.~Ho, ``The {W}itsenhausen counterexample: a hierarchical
  search approach for nonconvex optimization problems,'' \emph{IEEE Trans.
  Autom. Control}, vol.~46, no.~3, pp. 382--397, Mar. 2001.

\bibitem{BaPaZo01}
M.~Baglietto, T.~Parisini, and R.~Zoppoli, ``Numerical solutions to the
  {W}itsenhausen counterexample by approximating networks,'' \emph{IEEE Trans.
  Autom. Control}, vol.~46, no.~9, pp. 1471--1477, Sep. 2001.

\bibitem{MeAkRo14}
M.~Mehmetoglu, E.~Akyol, and K.~Rose, ``A deterministic annealing approach to
  {W}itsenhausen's counterexample,'' in \emph{ISIT 2014}, Honolulu, July 2014.

\bibitem{GnSa09}
G.~Gnecco and M.~Sanguinetti, ``Suboptimal solutions to network team
  optimization problems,'' in \emph{INOC 2009}, Pisa, 2009.

\bibitem{GnSa12}
------, ``New insights into {W}itsenhausen's counterexample,'' \emph{Optim.
  Lett.}, vol.~6, pp. 1425--1446, 2012.

\bibitem{GnSaGa12}
G.~Gnecco, M.~Sanguinetti, and M.~Gaggero, ``Suboptimal solutions to team
  optimization problems with stochastic information structure,'' \emph{SIAM J.
  Control Optim.}, vol.~22, no.~1, pp. 212--243, 2012.

\bibitem{McHa15}
W.~McEneaney and S.~Han, ``Optimization formulation and monotonic solution
  method for the {W}itsenhausen problem,'' \emph{Automatica}, vol.~55, pp.
  55--65, 2015.

\bibitem{banbas87a}
R.~Bansal and T.~Ba\c{s}ar, ``Stochastic team problems with nonclassical
  information revisited: When is an affine law optimal?'' \emph{IEEE
  Transactions Automatic Contr.}, vol.~32, pp. 554--559, 1987.

\bibitem{GroverParkSahai}
P.~Grover, S.~Y. Park, and A.~Sahai, ``Approximately optimal solutions to the
  finite-dimensional {W}itsenhausen counterexample,'' \emph{IEEE Transactions
  on Automatic Control}, vol.~58, no.~9, pp. 2189--2204, 2013.

\bibitem{MitSah99}
S.~Mitter and A.~Sahai, ``Information and control: {W}itsenhausen revisited,''
  in \emph{Learning, Control, and Hybrid Systems}, 1999, pp. 281--293.

\bibitem{LipsaMartins}
G.~M. Lipsa and N.~C. Martins, ``Optimal memoryless control in {G}aussian
  noise: A simple counterexample,'' \emph{Automatica}, vol.~47, pp. 552--558,
  March 2011.

\bibitem{zaidi2013optimal}
A.~A. Zaidi, S.~Y\"uksel, T.~Oechtering, and M.~Skoglund, ``On optimal policies
  for control and estimation over a {G}aussian relay channel,''
  \emph{Automatica}, vol.~49, no.~9, pp. 2892--2897, 2013.

\bibitem{PaTs86}
C.~Papadimitriou and J.~Tsitsiklis, ``Intractable problems in control theory,''
  \emph{SIAM J. Control Optim.}, vol.~24, no.~4, pp. 639--654, 1986.

\bibitem{AnderslandTeneketzisI}
M.~Andersland and D.~Teneketzis, ``Information structures, causality, and
  non-sequential stochastic control, {I}: design-independent properties,''
  \emph{SIAM J. of Control Optimization}, vol.~30, p. 1447--1475, 1992.

\bibitem{AnderslandTeneketzisII}
------, ``Information structures, causality, and non-sequential stochastic
  control, {II}: design-dependent properties,'' \emph{SIAM J. of Control
  Optimization}, vol.~32, p. 1726--1751, 1994.

\bibitem{Teneketzis2}
D.~Teneketzis, ``On information structures and nonsequential stochastic
  control,'' \emph{CWI Quarterly}, vol.~9, pp. 241--260, 1996.

\bibitem{Dud02}
R.~M. Dudley, \emph{Real Analysis and Probability}, 2nd~ed.\hskip 1em plus
  0.5em minus 0.4em\relax Cambridge: Cambridge University Press, 2002.

\bibitem{GrDu03}
A.~Granas and J.~Dugundji, \emph{Fixed Point Theory}.\hskip 1em plus 0.5em
  minus 0.4em\relax Springer, 2003.

\bibitem{AlBo06}
C.~Aliprantis and K.~Border, \emph{Infinite Dimensional Analysis}.\hskip 1em
  plus 0.5em minus 0.4em\relax Berlin, Springer, 3rd ed., 2006.

\bibitem{HeLa96}
O.~Hern\'andez-Lerma and J.~Lasserre, \emph{Discrete-Time {M}arkov Control
  Processes: Basic Optimality Criteria}.\hskip 1em plus 0.5em minus 0.4em\relax
  Springer, 1996.

\bibitem{GrNe98}
G.~Gray and D.~Neuhoff, ``Quantization,'' \emph{IEEE Trans. Inf. Theory},
  vol.~44, no.~6, pp. 2325--2383, Oct. 1998.

\bibitem{Par67}
K.~Parthasarathy, \emph{Probability Measures on Metric Spaces}.\hskip 1em plus
  0.5em minus 0.4em\relax AMS Bookstore, 1967.

\bibitem{HoChu}
Y.~C. Ho and K.~C. Chu, ``Team decision theory and information structures in
  optimal control problems - part {I},'' \emph{IEEE Transactions on Automatic
  Control}, vol.~17, pp. 15--22, February 1972.

\bibitem{ho1973equivalence}
------, ``On the equivalence of information structures in static and dynamic
  teams,'' \emph{IEEE Transactions on Automatic Control}, vol.~18, no.~2, pp.
  187--188, 1973.

\bibitem{Ser82}
R.~Serfozo, ``Convergence of {L}ebesgue integrals with varying measures,''
  \emph{Sankhya Ser.A}, pp. 380--402, 1982.

\end{thebibliography}

\end{document}